
\documentclass[a4paper,reqno,right =0cm,left=0cm]{amsart}

\usepackage[utf8]{inputenc}
\usepackage[T1]{fontenc}

\usepackage{amsmath}
\usepackage{amssymb}
\usepackage{float}
\usepackage{graphicx}
\usepackage{xcolor}
\usepackage{colortbl}
\usepackage{mathtools}
\usepackage{mathrsfs}
\usepackage{textcomp}
\usepackage{hhline}
\usepackage{comment}
\usepackage{mathdots,enumerate}




\newtheorem{theorem}{Theorem}[section]
\newtheorem{proposition}[theorem]{Proposition}
\newtheorem{corollary}[theorem]{Corollary}
\newtheorem{lemma}[theorem]{Lemma}

\theoremstyle{definition}
\newtheorem{definition}[theorem]{Definition}
\newtheorem{example}[theorem]{Example}

\newtheorem{remark}[theorem]{Remark}

\numberwithin{equation}{section}
\numberwithin{figure}{section}

\usepackage{tikz-cd}
\usetikzlibrary{positioning,arrows,decorations.markings}
\tikzset{negated/.style={
        decoration={markings,
            mark= at position 0.5 with {
                \node[transform shape] (tempnode) {$\backslash\!\!\backslash$};
            }
        },
        postaction={decorate}
    }
}




\def\e{{\epsilon}}

\newcommand{\K}{\mathbb{K}}
\newcommand{\R}{\mathbb{R}}
\newcommand{\C}{\mathbb{C}}
\newcommand{\N}{\mathbb{N}}
\newcommand{\Dc}{\mathcal{D}}
\newcommand{\Hc}{\mathcal{H}}
\newcommand{\Lc}{\mathcal{L}}
\newcommand{\cM}{\mathcal{M}}

\newcommand{\Vc}{\mathcal{V}}

\DeclareMathOperator{\re}{Re}
\DeclareMathOperator{\diag}{diag}
\DeclareMathOperator{\im}{Im}
\DeclareMathOperator{\rk}{rk}
\DeclareMathOperator{\gr}{gr}
\DeclareMathOperator{\dom}{dom}
\DeclareMathOperator{\ran}{ran}
\DeclareMathOperator{\Span}{span}
\DeclareMathOperator{\mul}{mul}

\newenvironment{smallpmatrix}
{\left(\begin{smallmatrix}}
{\end{smallmatrix}\right)}

\newenvironment{smallbmatrix}
{\left[\begin{smallmatrix}}
{\end{smallmatrix}\right]}




\title{A~linear relation approach to port-Hamiltonian differential-algebraic equations}

\author[H.~Gernandt]{Hannes Gernandt}
\address{Institut f\"{u}r Mathematik \\
             Technische Universit\"{a}t Ilmenau \\
             98693 Ilmenau\\
             Germany}
\email{hannes.gernandt@tu-ilmenau.de}

\author[F.E.~Haller]{Fr\'ed\'eric Enrico Haller}
\address{Universit\"{a}t Hamburg, Bundesstra\ss e 55, 20146 Hamburg, Germany}
\email{frederic.haller@uni-hamburg.de}

\author[T.~Reis]{Timo Reis}
\address{Universit\"{a}t Hamburg, Bundesstra\ss e 55, 20146 Hamburg, Germany}
\email{timo.reis@uni-hamburg.de}

 \thanks{This
       work was supported by the grants RE 2917/4-1 and WO 2056/1-1 ``Systems theory of partial differential-algebraic equations'' by the Deutsche Forschungsgemeinschaft (DFG)}


\begin{document}

\begin{abstract}
We consider linear port-Hamiltonian differential-algebraic equations (pH-DAEs).
Inspired by the geometric approach of {\sc Maschke} and {\sc van der Schaft} \cite{MascvdSc18} and
the linear algebraic approach {\sc Mehl, Mehrmann} and {\sc Wojtylak} \cite{MehlMehrWojt18}, we present another view by using the theory of {\em linear relations}. We show that this allows to elaborate the differences and mutualities of the geometric and linear algebraic views, and we introduce a~class of DAEs which comprises these two approaches. We further study the properties of matrix pencils arising from our approach via linear relations.
\end{abstract}



\maketitle

\pagestyle{myheadings}
\thispagestyle{plain}
\markboth{\textsc{H. Gernandt, F. E. Haller, and T. Reis}}{\textsc{Linear relations and port-Hamiltonian DAEs}}

\section{Introduction}\label{sec:intro}

Port-Hamiltonian modelling provides a framework allowing for a systematic port-based network modelling of complex lumped parameter systems from various physical domains. This modelling is based on energy considerations of individual systems and their interconnection. In the past decades, this approach has gained particularly increased attention from different communities, such as {geometric mechanics} and {mathematical systems theory}, from which different definitions of port-Hamiltonian systems emerged, see \cite{JacoZwar12, JvdS14,vdSc13} for an overview. 

This article is devoted to the analysis and comparison of two approaches to port-Hamiltonian differential-algebraic equations (DAEs).
One approach by {\sc Mehl}, {\sc Mehrmann} and {\sc Wojtylak} in \cite{MehlMehrWojt18} is of linear algebraic nature, and is based on the study of the class
\begin{align}
\label{DAE}
\tfrac{\rm d}{{\rm d}t}E z(t) = Az(t),
\end{align}
with, for $\K\in\{\R,\C\}$, $E,Q\in\K^{n\times m}$ and $D\in\K^{n\times n}$,
\begin{align}
\label{phintro}
A=DQ,\quad  Q^*E=E^*Q,\quad \text{and}\quad  { D+D^*\leq 0,}
\end{align}
where $M\geq0$ ($M\leq0$) refers to symmetry and positive (negative) semi-definiteness of the square matrix $M$,
and the property $D+D^*\leq 0$ is called \textit{dissipativity} of $D$. Note that \cite{MehlMehrWojt18} uses the notation $D=J-R$ for  $J,R\in\K^{n\times n}$ with $J$ skew-Hermitian and $R\geq0$, and we stress that a~matrix is dissipative if, and only if, it can be represented as such a~matrix difference as above.\\
Special emphasis is placed on the case where
\begin{align*}
Q^*E\geq 0,
\end{align*}
since, oftentimes, $\frac12z(t)^*Q^*Ez(t)$ corresponds to the physical energy of the system \eqref{DAE} at time $t$ \cite[Ex.~1]{MehlMehrWojt18}.
The properties \eqref{phintro} allow a~deep analysis of the Kronecker structure and location of eigenvalues of matrix pencils $sE-DQ\in\K[s]^{n\times m}$ and, consequently, an~understanding of the qualitative solution behavior of \eqref{DAE} \cite{MehlMehrWojt18}.\\
Another approach to port-Hamiltonian DAEs by {\sc Maschke} and {\sc van der Schaft} \cite{MascvdSc18} is of geometric nature. Such systems are specified by the relation
\begin{align}
\label{MSdefintro}
(e(t),\tfrac{\rm d}{{\rm d}t} x(t))\in \Dc,\quad  (x(t),e(t))\in\Lc
\end{align}
for some $\K^n$-valued function $e(\cdot)$, where  $\Lc$ and $\Dc$ are the so-called {\em Lagrangian} and {\em Dirac subspaces} of $\K^{2n}$, see Section \ref{sec:relations}. Note that, in \cite{MascvdSc18}, the first inclusions in \eqref{MSdefintro} is actually written as $(-\tfrac{\rm d}{{\rm d}t} x(t),e(t))\in\Dc$. However, it can be shown that this is equivalent to $(e(t),\tfrac{\rm d}{{\rm d}t}x(t))\in\widetilde{\Dc}$, for some alternative Dirac subspaces $\widetilde{\Dc}$.
It is shown in \cite{MascvdSc18} that Dirac and Lagrange subspaces admit kernel and image representations $\Dc=\ker[K,L]=\ran\left[\begin{smallmatrix}L^*\\K^*\end{smallmatrix}\right]$ and $\Lc=\ran\left[\begin{smallmatrix}P\\S\end{smallmatrix}\right]=\ker[S^*,-P^*]$ for some $K,P,L,S\in \K^{n\times n}$ with $KL^*=-LK^*$, $S^*P=P^*S$
and
$\rk[\,K,\, L\,]=\rk[\,P,\, S\,]=n$. This allows, by taking $\left(\begin{smallmatrix}e(t)\\z(t)\end{smallmatrix}\right)=\left[\begin{smallmatrix}P\\S\end{smallmatrix}\right]x(t)$, to rewrite \eqref{MSdefintro} as a DAE $L\tfrac{\rm d}{{\rm d}t} Px(t)=-KSx(t)$.\\
The purpose of this article is to present the relation between these two approaches. To this end, we present another view via so-called {\em linear relations}, a~concept which has been treated in several textbooks \cite{BehrHassdeSn20,Cross}. Via linear relations, we present a~class which comprises both the linear algebraic and geometric approach. In particular, we make use of three facts:
\begin{table}[H]
\begin{center}\fbox{
  \begin{minipage}[t]{10cm}
~\\[-4mm]
\begin{enumerate}[(i)]
\item the geometric concept of Dirac structure translates to the notion of {\em skew-adjoint linear relation} in the language of linear relations,\\[-2mm]
\phantom{x}
  \item Lagrangian subspaces correspond to {\em self-adjoint linear relations}, and\\[-2mm]
\phantom{x}
  \item dissipative matrices can be generalized to {\em dissipative linear relations}.\\[-2mm]
\phantom{x}
\end{enumerate}
\end{minipage}
}\caption{Linear relations and Dirac/Lagrange subspaces}\label{fig:differencesDLL}
\end{center}\end{table}

We will see that \eqref{MSdefintro} can be written, in the language of linear relations, as
\begin{align}
\label{ourdefintro}
(x(t),\dot x(t))\in \Dc\Lc,
\end{align}
where $\Dc \Lc$ is the product of the linear relations $\Dc$ and $\Lc$, see Section~\ref{sec:relations}. By choosing matrices $E,A\in\K^{n\times q}$ with
\begin{equation}
\boxed{\begin{aligned}~\\[-4mm]
\qquad\Dc\Lc=\ran\begin{bmatrix}E\\ A\end{bmatrix},\qquad\\[-4mm]
\phantom{x}\end{aligned}
}\label{factor3}
\end{equation}
the differential inclusion \eqref{ourdefintro} can be transformed to the DAE 
\[
    \begin{smallpmatrix} x(t)\\\dot{x}(t)\end{smallpmatrix}=\begin{smallbmatrix}E\\ A\end{smallbmatrix}z(t).
\]
which has to be solved for $x(\cdot)$ and some $\K^q$-valued function $z(\cdot)$. 
It can be seen that an elimination of $x(\cdot)$ leads to $\tfrac{\rm d}{{\rm d}t}E z(t) = Az(t)$. On the other hand, it can be shown that for 
matrices with properties as in \eqref{phintro} and choosing
$\Dc=\ran\begin{smallbmatrix}I\\D\end{smallbmatrix}$, $\Lc=\ran\begin{smallpmatrix}E\\Q\end{smallpmatrix}$,
the equations \eqref{DAE} and \eqref{ourdefintro} are equivalent. Hereby, we will see that $\Dc$ is a~so-called {\em dissipative relation} and $\Lc$ is a~{\em symmetric relation}. These are concepts which are slightly more general than skew-adjoint and self-adjoint relations.

These findings allow a~comparison of the approaches in
\cite{MascvdSc18} and \cite{MehlMehrWojt18}:
Namely, to analyze whether a~given pH-DAE in the sense of \cite{MehlMehrWojt18} is one in the sense of \cite{MascvdSc18}, it has to be investigated whether the linear relation $\Lc=\ran\begin{smallpmatrix}E\\Q\end{smallpmatrix}$ is self-adjoint subspace $\Lc$ and a~skew-adjoint subspace $\Dc$. 
 On the other hand, to analyze whether a~pH-DAE which in the sense of \cite{MascvdSc18} is one in the sense of \cite{MehlMehrWojt18}, it has to be investigated whether $\Dc=\gr D$ for some dissipative matrix $D\in\K^{n\times n}$, where $\gr D$ stands for the {\em graph of $D$}, i.e., $\gr D=\ran\begin{smallbmatrix}I\\ D\end{smallbmatrix}$. Moreover, a~joint structure of both approaches are DAEs $\tfrac{\rm d}{{\rm d}t}E z(t) = Az(t)$ for which \eqref{factor3} holds for some dissipative relation $\Dc$ symmetric relation $\Lc$.

Besides a~comparison of both existing approaches to pH-DAEs, we will investigate structural properties of DAEs belonging to the aforementioned joint structure, such as an analysis of the Kronecker structure of the pencil $sE-A$ with \eqref{factor3} with $\Dc$ and $\Lc$ being dissipative and symmetric, respectively. Sometimes we will impose the additional assumption that $\Lc$ is a {\em nonnegative linear relation}, which generalizes the condition that $E^*Q$ is positive semi-definite. Note that the latter is motivated by
quadratic form $\frac12x(t)^*Q^*Ex(t)$ oftentimes standing for physical energy of the system at time $t$.

\begin{figure}[H]
{\footnotesize
\begin{center}
\resizebox{0.98\linewidth}{!}{  \begin{tikzpicture}[thick,node distance = 22ex,implies/.style={double,double equal sign distance,-implies}, box/.style={fill=white,rectangle, draw=black},
  blackdot/.style={inner sep = 0, minimum size=3pt,shape=circle,fill,draw=black},plus/.style={fill=white,circle,inner sep = 0,very thick,draw},
  metabox/.style={inner sep = 3ex,rectangle,draw,dotted,fill=gray!20!white}]

    \node (ms)     [metabox,minimum size=1.4cm,]  {\parbox{4.6cm}{\rm $\begin{matrix}\ran\begin{smallbmatrix}E\\ A\end{smallbmatrix}=\Dc\Lc,\\ \Dc\,{\rm Dirac\, and}\, \Lc\;{\rm Lagrangian}\\ \textsc{(Maschke, van der Schaft)}\end{matrix}$}};

    \node (dms)     [right of = ms,metabox,minimum size=1.4cm,xshift=40ex]  {\parbox{4.6cm}{\rm $\begin{matrix}\ran\begin{smallbmatrix}E\\ A\end{smallbmatrix}=\Dc\Lc,\, \Dc\; {\rm maximally}\\ {\rm dissipative, and }\;\Lc\;{\rm self{-}adjoint}\end{matrix}$}};

    \node (mmw)     [below of = ms,metabox,minimum size=1.4cm]  {\parbox{4.6cm}{\rm $\begin{matrix}\ran\begin{smallbmatrix}E\\ A\end{smallbmatrix}=(\gr D) \Lc,\\ D+D^*\leq 0,\,\Lc  \,{\rm symmetric}\\ \textsc{(Mehl, Mehrmann, Wojtylak)}\end{matrix}$}};

    \node (nonmax)     [below of = dms,metabox,minimum size=1.4cm]  {\parbox{4.6cm}{\rm $\begin{matrix}\ran\begin{smallbmatrix}E\\ A\end{smallbmatrix}=\Dc \Lc,\\ \Dc\;{\rm dissipative, and }\; \Lc\;{\rm symmetric}\end{matrix}$}};

 \node (newmmw) [below of = ms,yshift=4ex,xshift=21ex] {};

 \node (newnonmax) [below of = dms,yshift=4ex,xshift=-21ex] {};

 \node (newms) [right of = ms,yshift=-6.2ex,xshift=-15ex] {};

 \node (newmmw2) [right of = mmw,yshift=6ex,xshift=-15ex] {};

    \draw[->,semithick,double,double equal sign distance,>=stealth,negated] (newms)  -- node [right] {\,\,{\rm Example\,\ref{ex:notmmw}} }  (newmmw2);

    \draw[->,semithick,double,double equal sign distance,>=stealth,negated] (mmw)  -- node [left] {{\rm Examples\,\ref{ex:ex1}\&\ref{ex:notms}} \,\,}  (ms);

    \draw[->,semithick,double,double equal sign distance,>=stealth] (ms)  -- node [above] {}  (dms);


    \draw[->,semithick,double,double equal sign distance,>=stealth] (mmw)  -- node [above] {}  (nonmax);


    \draw[->,semithick,double,double equal sign distance,>=stealth] (dms)  -- node [left,yshift=-2ex,,xshift=3ex] {\,}  (nonmax);

  \end{tikzpicture}}

\end{center}}

  \caption{Relations between geometric concepts and those from the theory of linear relations}
  \end{figure}
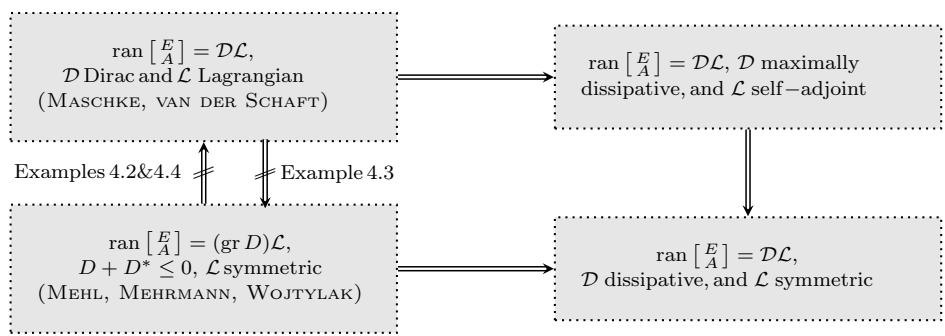




Note that both the approaches in \cite{MascvdSc18} and \cite{MehlMehrWojt18} allow the incorporation of further {\em external variables}, such as inputs and outputs. In this article we will restrict to the uncontrolled case for sake of better overview.

The paper is organized as follows: in Section \ref{sec:pencils} we recall basic facts on matrix pencils, such as the Kronecker form. In Section \ref{sec:relations} the basic notions from the theory of linear relations and properties of dissipative, nonnegative and self-adjoint subspaces are presented. This can be used in Section~\ref{sec:comp} for a~port-Hamiltonian formulation via linear relations, along with a~detailed comparison of the approaches of {\sc Mehrmann}, {\sc Mehl} and {\sc Wojtylak} and the formulation \eqref{MSdefintro} by {\sc Maschke} and {\sc van der Schaft} via Dirac and Lagrange structures. By using linear relations, we will introduce a~novel class which can be seen as a {\em least common multiple} of both existing approaches.
Section \ref{sec:reg} is devoted to the characterization of regularity of the pencils arising in this novel class, and, in Section~\ref{sec:main} we use, the additional assumption that the linear relation $\Lc$ in \eqref{MSdefintro} is nonnegative and perform a~structural analysis of such systems. In particular, we analyze the index and the location of the eigenvalues of the underlying matrix pencil.  




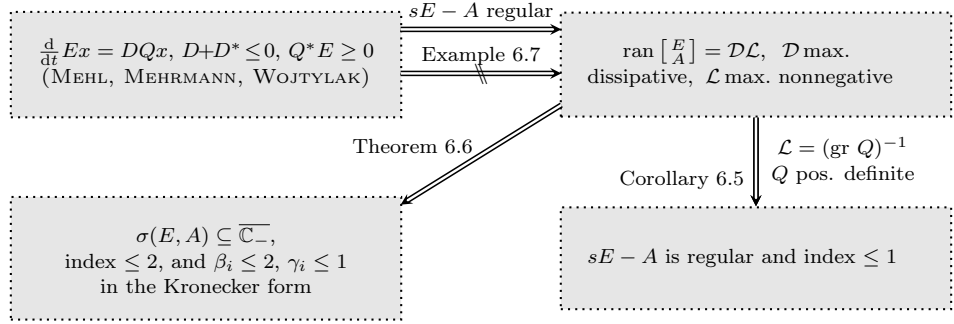
\begin{figure}
{\footnotesize
\begin{center}
  \resizebox{0.98\linewidth}{!}{\begin{tikzpicture}[thick,node distance = 22ex,implies/.style={double,double equal sign distance,-implies}, box/.style={fill=white,rectangle, draw=black},
  blackdot/.style={inner sep = 0, minimum size=3pt,shape=circle,fill,draw=black},plus/.style={fill=white,circle,inner sep = 0,very thick,draw},
  metabox/.style={inner sep = 3ex,rectangle,draw,dotted,fill=gray!20!white}]

    \node (mm)     [metabox,minimum size=1.4cm]  {\parbox{4.6cm}{\rm $\begin{matrix}\tfrac{\rm d}{{\rm d}t}Ex=DQx,\, D\!\!+\!\!D^*\!\leq \!0,\, Q^*E\geq 0\\ \textsc{(Mehl, Mehrmann, Wojtylak)}\end{matrix}$}};
    \node (ms)     [right of = mm,metabox,minimum size=1.4cm,xshift=40ex]  {\parbox{4.6cm}{\rm $\begin{matrix}\ran\begin{smallbmatrix}E\\ A\end{smallbmatrix}=\Dc\Lc,\;\; \Dc\,{\rm max.\ }\\ {\rm dissipative, }\;\; \Lc\,{\rm max.\ nonnegative}\end{matrix}$}};

    \node (kron) [below of = mm, metabox,minimum size=1.4cm]  {\parbox{4.6cm}{\rm\centering $\sigma(E,A)\subseteq\overline{\C_-}$,\\  index $\leq 2$, and $\beta_i\leq 2$,  $\gamma_i\leq 1$\\in the Kronecker form}};
    \node (ind) [below of = ms, metabox,minimum size=1.4cm]  {\parbox{4.6cm}{\rm $sE-A$ is regular and index $\leq 1$}};

\node (test4) [below of =mm, xshift=21ex,yshift=26ex]{};
\node (test5) [below of =mm, xshift=21ex,yshift=21ex]{};
\node (test6) [below of =mm, xshift=41ex,yshift=26ex]{};
\node (test7) [below of =mm, xshift=41ex,yshift=21ex]{};
\node (test8) [below of =mm, xshift=21ex,yshift=5.5ex]{};

\node (newms) [right of =mmw, xshift=19ex,yshift=18ex]{};

    \draw[->,semithick,double,double equal sign distance,>=stealth] (ms)  -- node [left,yshift=-2ex] {Corollary \ref{cor:index1}\,}  (ind);

    \draw[->,semithick,double,double equal sign distance,>=stealth] (ms)  -- node [right,yshift=0ex] {\,\,$\begin{matrix} \text{$\Lc=(\gr \,Q)^{-1}$}\\ \text{$Q$ pos. definite}\end{matrix}$}  (ind);


    \draw[->,semithick,double,double equal sign distance,>=stealth] (test4)  -- node [above] {$sE-A$ regular}  (test6);

    \draw[->,semithick,double,double equal sign distance,>=stealth] (newms)  -- node [left,yshift=1ex] {Theorem \ref{thm:singular}} (test8);

\draw[->,semithick,double,double equal sign distance,>=stealth,negated] (test5)  -- node [above] {Example \ref{ex:MMW}} (test7);

  \end{tikzpicture}}

\end{center}}

  \caption{Properties of matrix pencils arising in port-Hamiltonian formulations.}
  \end{figure}

\section{Preliminaries on matrix pencils}
\label{sec:pencils}
The analysis of DAEs of the form \eqref{DAE} leads to the study of \textit{matrix pencils}, which are first-order matrix polynomials  $sE-A\in\K[s]^{n\times m}$ with coefficient matrices $E,A\in\K^{n\times m}$. To this end, note that $\K[s]$ denotes the ring of polynomials over $\K$, and $\K(s)$ is the quotient field of $\K[s]$.

First, we recall the \textit{Kronecker form} for matrix pencils, see e.g.\ \cite[Chap.~XII]{Gant59}, i.e.\ there exist invertible matrices $S\in\K^{n\times n}$ and $T\in\K^{m\times m}$ with
\begin{align}
\label{KCF}
S(sE-A)T=\begin{bmatrix} sI_{n_0}-J& 0&0&0 \\0& sN_\alpha-I_{|\alpha|}&0&0\\ 0&0& sK_{\beta}-L_{\beta} &0\\ 0&0&0& sK_{\gamma}^\top-L_{\gamma}^\top\end{bmatrix}
\end{align}
with $J$ in Jordan canonical form over $\K$, see e.g.\  \cite[Secs.~3.1\& 3.4]{HornJohn13} and, for multi-indices
$\alpha=(\alpha_i)_{i=1,\ldots,\ell_\alpha}$,
$\beta=(\beta_i)_{i=1,\ldots,\ell_\beta}$,
$\gamma=(\gamma_i)_{i=1,\ldots,\ell_\gamma}$,
\[N_\alpha=\diag(N_{\alpha_i})_{i=1,\ldots,\ell_\alpha},\;\;
K_\beta=\diag(K_{\beta_i})_{i=1,\ldots,\ell_\beta}
,\;\;
L_\gamma=\diag(L_{\gamma_i})_{i=1,\ldots,\ell_\gamma},
\]
where, for $k\in \N$ with $k\geq1$, $N_k$ is a~nilpotent Jordan block of size $k\times k$, and $K_k\coloneqq[I_{k-1},0]\in\R^{(k-1)\times k}$, $L_k=[0,I_{k-1}]\in\R^{(k-1)\times k}$.
The numbers $\alpha_i$ for $i=1,\ldots,\ell_\alpha$ are referred to as \textit{sizes of the Jordan blocks at $\infty$}, whereas for $i=1,\ldots,\ell_\beta$, $j=1,\ldots,\ell_\gamma$, the numbers $\beta_i-1$ and $\gamma_j-1$ are respectively called \textit{column} and \textit{row minimal indices}, and are well-defined by $sE-A$. Furthermore, we can define the \textit{(Kronecker) index} $\nu$ of the DAE \eqref{DAE} based on the Kronecker canonical form \eqref{KCF} as
\begin{align}
\label{def:index}
\nu=\max\{\alpha_1,\ldots,\alpha_{\ell_\alpha},\gamma_1,\ldots,\gamma_{\ell_\gamma},0\}.
\end{align}
In this sense a~DAE \eqref{DAE} has index one if $N_{\alpha}=0$ and if the fourth block column in \eqref{KCF} is zero. The upper left subpencil 
$\diag(sI_{n_0}-J, sN_\alpha-I_{|\alpha|})$
in \eqref{KCF} is called the \textit{regular part} of the Kronecker form \eqref{KCF}. A~number $\lambda\in \C$ is an {\em eigenvalue of the pencil $sE-A$}, if
 $\rk_\C\lambda E-A<\rk_{\K(s)}sE-A$, and we write
\[\sigma(E,A)\coloneqq\{\lambda\in\C\,|\,\lambda\text{ is an eigenvalue of $sE-A$}\}.\]
Note that $\lambda \in\C$ is an eigenvalue of the pencil $sE-A$ if, and only if, $\lambda$ is an eigenvalue of the matrix $J$ in the Kronecker form \eqref{KCF}. An eigenvalue $\lambda\in\sigma(E,A)$ is called \emph{semi-simple} if $J$ in \eqref{KCF} has no Jordan blocks of size greater or equal to two at $\lambda$. Note that semi-simplicity is well-defined, i.e., it does not depend on the given Kronecker form of $sE-A$.



A~square pencil $sE-A\in\K[s]^{n\times n}$ is called \textit{regular}, if $\det(sE-A)$ is not the zero polynomial. This is equivalent to the property that $sE-A$ has no row and column minimal indices. The Kronecker form of a~regular pencil is also called \textit{Weierstra\ss\ form}. For regular matrix pencils, set of eigenvalues fulfills
\[
\sigma(E,A)=\{\lambda\in\C \,|\, \det(\lambda E-A)=0\}.
\]
Note that regularity implies that $sE-A$ is invertible as a~matrix with entries in $\K(s)$. In this case, $\sigma(E,A)$ coincides with the set of poles of $(sE-A)^{-1}\in\K(s)^{n\times n}$.
%

We state another elementary lemma which can be derived directly from the Weierstra\ss\ canonical form for regular matrix pencils. We will characterize the index by means of the growth of the {\em resolvent} $(sE-A)^{-1}$ on a~real half-axis. To this end, we will use a~certain matrix norm. Note that, by finite-dimensionality of the systems, the result is independent of concrete choice of the matrix norm.

\begin{lemma}
\label{lem:elementary}
Let the pencil $sE-A\in\K[s]^{n\times n}$ be regular. Then the index of $sE-A$ is equal to the smallest number $k$ for which there exists some $M>0$ and $\omega\in\R$, such that
\begin{align*}
\forall \lambda>\omega:\quad\|(\lambda E-A)^{-1}\|\leq M|\lambda|^{k-1}.
\end{align*}
Moreover, the size of the largest Jordan block at an eigenvalue $\lambda$ of $sE-A$ is equal to the order of $\lambda$ as a pole of $(sE-A)^{-1}\in\K(s)^{n\times n}$.
\end{lemma}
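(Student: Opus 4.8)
The plan is to reduce everything to the Weierstra\ss\ canonical form and then read off the growth of the resolvent blockwise. Since $sE-A$ is regular, its Kronecker form \eqref{KCF} reduces to the Weierstra\ss\ form, i.e.\ there are invertible $S,T\in\K^{n\times n}$ with $S(sE-A)T=\diag(sI_{n_0}-J,\,sN_\alpha-I_{|\alpha|})$, and the index of $sE-A$ is $\nu=\max\{\alpha_1,\dots,\alpha_{\ell_\alpha},0\}$; in particular $N_\alpha$ is nilpotent with $N_\alpha^{\nu}=0$ and, if $\nu\ge 1$, with $N_\alpha^{\nu-1}\neq 0$. For real $\lambda$ exceeding the modulus of every eigenvalue of $J$, the block $\lambda I_{n_0}-J$ is invertible, and the terminating Neumann series $(\lambda N_\alpha-I)^{-1}=-\sum_{j\ge 0}\lambda^{j}N_\alpha^{j}=-\sum_{j=0}^{\nu-1}\lambda^{j}N_\alpha^{j}$ yields
\[
(\lambda E-A)^{-1}=T\,\diag\!\Big((\lambda I_{n_0}-J)^{-1},\,-\textstyle\sum_{j=0}^{\nu-1}\lambda^{j}N_\alpha^{j}\Big)\,S.
\]

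Next I would pass to a convenient norm. Since, as already noted, the existence of an estimate $\|(\lambda E-A)^{-1}\|\le M|\lambda|^{k-1}$ on a half-axis does not depend on the chosen matrix norm, I may use the spectral norm $\|\cdot\|$, which is submultiplicative and satisfies $\|\diag(X,Y)\|=\max\{\|X\|,\|Y\|\}$. Then $\|(\lambda E-A)^{-1}\|$ is, up to the fixed factors $\|T\|\|S\|$ and $\|T^{-1}\|\|S^{-1}\|$, equal to $\max\{\|(\lambda I_{n_0}-J)^{-1}\|,\,\|\sum_{j=0}^{\nu-1}\lambda^{j}N_\alpha^{j}\|\}$, so it suffices to track these two quantities as $\lambda\to+\infty$. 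A Neumann-series estimate bounds the first one by $C|\lambda|^{-1}$ for all large $|\lambda|$, so it is bounded. For the nilpotent polynomial the triangle inequality gives $\|\sum_{j=0}^{\nu-1}\lambda^{j}N_\alpha^{j}\|\le\big(\sum_{j}\|N_\alpha^{j}\|\big)|\lambda|^{\nu-1}$ for $|\lambda|\ge 1$; conversely, choosing $v$ with $N_\alpha^{\nu-1}v\neq 0$ and writing $\sum_{j=0}^{\nu-1}\lambda^{j}N_\alpha^{j}v=\lambda^{\nu-1}\big(N_\alpha^{\nu-1}v+\mathcal O(|\lambda|^{-1})\big)$ shows $\|\sum_{j=0}^{\nu-1}\lambda^{j}N_\alpha^{j}\|\ge\tfrac12\|v\|^{-1}\|N_\alpha^{\nu-1}v\|\,|\lambda|^{\nu-1}$ for all large $\lambda$ (when $\nu\ge 1$; when $\nu=0$ the infinite block is absent and only the bounded first term survives). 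Hence $\|(\lambda E-A)^{-1}\|$ grows exactly like $|\lambda|^{\nu-1}$ as $\lambda\to+\infty$, and therefore the smallest $k$ for which $\|(\lambda E-A)^{-1}\|\le M|\lambda|^{k-1}$ holds on some half-axis is $k=\nu$ — the first assertion.

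For the pole statement, I would use that multiplying a rational matrix function by fixed invertible matrices on either side changes neither its poles nor their orders, since each entry of the product is a $\K$-linear combination of entries of the original, and conversely. Hence the order of $\lambda\in\sigma(E,A)$ as a pole of $(sE-A)^{-1}$ equals its order as a pole of $\diag((sI_{n_0}-J)^{-1},(sN_\alpha-I)^{-1})$. The second block equals the polynomial $-\sum_{j=0}^{\nu-1}s^{j}N_\alpha^{j}$ and hence has no finite poles; for the first block, by invariance under similarity it suffices to take a single Jordan block $\lambda_0 I_k+N_k$ of $J$, for which the terminating Neumann series again gives $((s-\lambda_0)I_k-N_k)^{-1}=\sum_{j=0}^{k-1}(s-\lambda_0)^{-(j+1)}N_k^{j}$, a function with a pole at $\lambda_0$ of order exactly $k$, because the coefficient $N_k^{k-1}$ of $(s-\lambda_0)^{-k}$ is nonzero. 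Forming the direct sum over all Jordan blocks of $J$, the order of $\lambda$ as a pole of $(sI_{n_0}-J)^{-1}$ is the size of the largest Jordan block of $J$ — equivalently, of $sE-A$ — at $\lambda$, which is the remaining claim.

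The computations are all elementary; the only points I would treat with a little care are the degenerate case of an empty infinite block (index $0$, i.e.\ $E$ invertible, where the resolvent simply decays like $|\lambda|^{-1}$ and the smallest $k$ is $0$) and the two reductions ``up to fixed constants'' — adapting the norm to the block structure and absorbing the similarity transformations $S$ and $T$ — which I would phrase briefly so that the proof does not degenerate into norm bookkeeping.
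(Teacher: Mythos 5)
Your proof is correct, and it follows exactly the route the paper indicates: the paper states Lemma~\ref{lem:elementary} without proof, remarking only that it ``can be derived directly from the Weierstra\ss\ canonical form'', which is precisely the blockwise resolvent analysis (terminating Neumann series for the nilpotent block, Jordan-block resolvent for the finite eigenvalues) that you carry out. Your handling of the norm equivalence, the two-sided estimates via $S,T$, and the degenerate case $\nu=0$ fills in the omitted details correctly.
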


\begin{definition}
\label{def:posreal}
A~matrix $G(s)\in\K(s)^{n\times n}$ is called {\em positive real}, if
\begin{itemize}
\item[\rm (a)] $G(s)$ has no poles in the open right complex half-plane.
\item[\rm (b)]$G(\lambda)+G(\lambda)^*\geq 0$ for all $\lambda\in\C$ with $\re\lambda>0$.
\end{itemize}
\end{definition}
It can be immediately seen that a matrix pencil $sE-A\in\K[s]^{n\times n}$ is {positive real} if, and only if, $E=E^*\geq 0$ and $A+A^*\leq 0$. We recall some properties of positive real matrix pencils, which can be immediately concluded by a~combination of \cite[Lem.~2.6]{BergReis14} with \cite[Cor.~2.3]{BergReis14b}. 
\begin{lemma}
\label{lem:posreal}
Let $sE-A\in\K[s]^{n\times n}$ be a~positive real pencil. Then the following holds.
\begin{itemize}
    \item[\rm (a)] $sE-A$ is regular if, and only if, $\ker E\cap\ker A=\{0\}$.
    \item[\rm (b)] The row and column minimal indices are at most zero and their numbers coincide.
    \item[\rm (c)] The eigenvalues of the pencil are contained in the closed left half-plane $\overline{\C_-}$ and the eigenvalues on the imaginary axis are semi-simple.
    \item[\rm (d)] The index of $sE-A$ is at most two.
\end{itemize}
\end{lemma}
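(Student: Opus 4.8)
The plan is to derive the four statements from the two cited structural results on positive real pencils, doing the elementary parts by hand. Recall first that, as noted just above, positive realness of $sE-A$ amounts to $E=E^*\geq 0$ and $A+A^*\leq 0$, and only these two properties are used in what follows.

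I would settle the easy implication in~(a) and the whole of~(c) directly. If $0\neq x\in\ker E\cap\ker A$, then $(sE-A)x=0$ identically, so $sE-A$ is singular; this proves the easy implication of~(a). For~(c), let $\lambda\in\sigma(E,A)$ be a finite eigenvalue. Then $\lambda$ is an eigenvalue of the matrix $J$ in the Kronecker form~\eqref{KCF}, so lifting an eigenvector of $J$ gives some $0\neq x$ with $(\lambda E-A)x=0$ and, by the block structure of~\eqref{KCF}, with $Ex\neq 0$. Taking real parts in $0=x^*(\lambda E-A)x$ yields
\[
0=\re(\lambda)\,x^*Ex-\tfrac12\,x^*(A+A^*)x,
\]
a sum of two nonnegative numbers as soon as $\re\lambda>0$; hence $x^*Ex=0$ and therefore $Ex=0$ (as $E\geq 0$), contradicting $Ex\neq 0$. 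Thus $\sigma(E,A)\subseteq\overline{\C_-}$. For semi-simplicity of imaginary eigenvalues, suppose $\{x_1,x_2\}$ is a Jordan chain of length two at $\lambda=i\omega$, that is $(i\omega E-A)x_1=0$, $(i\omega E-A)x_2=Ex_1$, and $Ex_1\neq 0$. Real parts in $0=x_1^*(i\omega E-A)x_1$ force $x_1^*(A+A^*)x_1=0$, hence $(A+A^*)x_1=0$; together with $(i\omega E-A)x_1=0$ this gives $(i\omega E-A)^*x_1=0$, and left-multiplying $(i\omega E-A)x_2=Ex_1$ by $x_1^*$ then yields $x_1^*Ex_1=0$, whence $Ex_1=0$, a contradiction.

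The remaining assertions are exactly what the cited results deliver: the nontrivial implication in~(a) (regularity of $sE-A$ once $\ker E\cap\ker A=\{0\}$), statement~(b) on the row and column minimal indices, and the index bound in~(d). These I would obtain by combining \cite[Lem.~2.6]{BergReis14} with \cite[Cor.~2.3]{BergReis14b}, which supply a structured condensed form of a positive real pencil under congruence and thereby expose its full Kronecker structure --- the triviality of the singular blocks, the equal numbers of row and column minimal indices, and the absence of Jordan blocks of size at least three at $\infty$; via Lemma~\ref{lem:elementary}, the last point is equivalent to the resolvent estimate $\|(\lambda E-A)^{-1}\|\leq M|\lambda|$ for large real $\lambda$, which is also visible from that form. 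I expect the only real obstacle to be the derivation of this structured condensed form, which is the technical core of \cite{BergReis14,BergReis14b} and which I would invoke rather than reprove.
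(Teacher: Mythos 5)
Your proposal is correct, and it rests on the same foundation as the paper: the paper gives no self-contained argument at all, but simply notes that all four statements follow by combining \cite[Lem.~2.6]{BergReis14} with \cite[Cor.~2.3]{BergReis14b}, which is exactly what you do for the nontrivial half of (a), for (b) and for (d). The added value of your write-up is that you verify (c) and the easy half of (a) directly from $E=E^*\geq 0$, $A+A^*\leq 0$, and these arguments are sound: the lifting of an eigenvector (resp.\ a length-two Jordan chain) of $J$ through the Kronecker form does produce $x$ with $(\lambda E-A)x=0$ and $Ex\neq 0$ (resp.\ $x_1,x_2$ with $(\lambda E-A)x_2=\pm Ex_1$, $Ex_1\neq0$ --- note the sign convention, which is harmless since you only use $x_1^*Ex_1=0$), and the semidefiniteness arguments ($x^*Ex=0\Rightarrow Ex=0$ for $E\geq0$, and likewise $(A+A^*)x_1=0$) are standard. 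So the only genuine difference is that your version makes the spectral statement (c) independent of the cited condensed form, while the paper outsources everything; there is no gap.
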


\section{Preliminaries on linear relations}
\label{sec:relations}
We will introduce the notion of \textit{linear relation} on $\K^n$, which are basically subspaces of $\K^n\times\K^n\cong\K^{2n}$. An introduction to linear relations can be found e.g.\ in \cite{BehrHassdeSn20, Cross}. Throughout this article, we assume that $\K^n$ is equipped with the standard scalar product $\langle\cdot,\cdot\rangle: (x,y)\mapsto y^*x$. An important special case of a~linear relation is the graph of a square matrix $M\in\K^{n\times n}$, i.e.
\[\gr M\coloneqq\{(x,Mx) \,|\, x\in\K^n\}.\]
This motivates to define the following concepts for linear relations. Note that, by writing $(x,y)\in\K^{2n}$, we particularly mean that $x,y\in\K^n$.
\begin{definition}[Concepts and operations on linear relations]
Let $n\in\N$, and $\Lc,\cM\subset\K^{2n}$ be linear relations in $\K^n$.\\
The  {\em domain, kernel, range} and {\em multi-valued part} are
\begin{align*}
    \dom \cM&\coloneqq\{x\in \K^n \,|\, (x,y)\in \cM\},\quad
    &\ker \cM&\coloneqq\{x\in \K^n \,|\, (x,0)\in \cM\},\\
    \ran \cM&\coloneqq\{y\in \K^n \,|\, (x,y)\in \cM\},\quad
    &\mul \cM&\coloneqq\{y\in \K^n \,|\, (0,y)\in \cM\},
\end{align*}
and \textit{scalar multiplication with $\alpha\in\K$}, \textit{operator-like sum}, {\em product}, {\em inverse} and {\em adjoint}
are defined by
\begin{align*}
    \alpha\cM&\coloneqq\{(x,\alpha y)\in\K^{2n} \,|\, (x,y)\in\cM\},\\
    \Lc+\cM&\coloneqq\{(x,y_1+y_2)\in\K^{2n} \,|\, (x,y_1)\in\Lc,(x,y_2)\in\cM\},\\
    \cM\Lc&\coloneqq\{ (x,z)\in\K^{2n} \,|\, \text{$\exists y\in\Hc$ s.t.\ } (x,y)\in\Lc, (y,z)\in\cM \},\\
    \cM^{-1}&\coloneqq\{(y,x)\in\K^{2n} \,|\, (x,y)\in\cM\},\\
    \cM^*&\coloneqq\{(x,y)\in\K^{2n} \,|\, \langle w,x\rangle=\langle v,y\rangle \;\;\forall \,(v,w)\in\cM\}.
\end{align*}
A~linear relation with $\cM\subseteq \cM^*$ is called {\em symmetric}, whereas $\cM$ is {\em self-adjoint}, if $\cM=\cM^*$. Likewise, $\cM$ with
$\cM\subseteq -\cM^*$ is called {\em skew-symmetric}, and  $\cM$ is {\em skew-adjoint}, if it has the property $\cM=-\cM^*$.
\end{definition}

If $\K=\C$ then a~linear relation $\cM$ is symmetric (self-adjoint) if, and only if, $\imath \cM$ is skew-symmetric (skew-adjoint), where $\imath$  denotes the imaginary unit.

Note that the operator-like sum of two linear relations $\Lc,\cM\subset\K^{2n}$ is \underline{not} the \textit{componentwise sum}, which is defined by
\begin{align*}
    \Lc\widehat+\cM\coloneqq\{(x_1+x_2,y_1+y_2)\in \K^{2n} \,\,|\,\, (x_1,y_1)\in\Lc, (x_2,y_2)\in\cM\}.
\end{align*}
If $\Lc$ and $\cM$ satisfy
$\Lc\cap\cM=\{0\}$ we will write $\Lc\widehat{\dot+}\cM$ for the componentwise sum of $\Lc$ and $\cM$.
We oftentimes use the identity
\begin{align}
\label{identity}
(-\cM^*)^{-1}=\cM^{\perp},
\end{align}
where $\cM^{\perp}$ is the orthogonal complement of $\cM\subseteq\K^{2n}$. In particular, we can conclude that
\begin{align*}
    2n=\dim\cM+\dim\cM^\perp=\dim\cM+\dim(\cM^*)^{-1}
    =\dim\cM+\dim\cM^*,
\end{align*}
which gives
\begin{equation}\label{eq:dimM*}
    \dim\cM^*=2n-\dim\cM.
\end{equation}
We will also use that a linear relation $\cM$ in $\K^n$ can be written as $\cM=\ker[K,L]$ or $\cM=\ran\begin{smallbmatrix}F\\ G\end{smallbmatrix}$ with matrices $F,G\in\K^{n\times l}$ and $K,L\in\K^{l\times n}$ which we will refer to as \emph{kernel} and \emph{image representation}. These representations always exist, see e.g.\ \cite[Thm.~3.3]{BergTrunWink16}, if $\K=\C$, for each choice of $l\in\N$ such that $l\geq\dim\cM$. The proof of the existence of the range representation for $\K=\R$ can also be derived from the above mentioned result.

Together with \eqref{identity} we have for $\cM=\ran\begin{smallbmatrix}F\\ G\end{smallbmatrix}=\ker[K,L]$ that
\begin{align}
\label{Lstar}
\cM^*=\ker[G^*,-F^*]=\ran\begin{smallbmatrix}L^*\\ -K^*\end{smallbmatrix}.
\end{align}

 In literature on port-Hamiltonian systems, self-adjoint linear relations in $\K^{n}$ appear under the name \textit{Lagrangian subspaces}, whereas skew-adjoint linear relations are called \textit{Dirac subspaces}, see e.g.\ \cite{MascvdSc18}.

In the following result we characterize symmetry and self-adjointness of a linear relation by means of certain properties of the matrices in the range and kernel representation.

\begin{lemma}
\label{lem:symequiv}
Let $\cM\subset\K^{2n}$ be a~linear relation. Then $\cM$ is symmetric if, and only if, $\cM=\ran\begin{smallbmatrix}F\\ G\end{smallbmatrix}$ for some $F,G\in\K^{n\times l}$ with $G^*F=F^*G$. Moreover, the following statements are equivalent.
\begin{itemize}
    \item[\rm (a)] $\cM$ is self-adjoint,
    \item[\rm (b)] $\cM$ is symmetric and $\dim\cM=n$,
    \item[\rm (c)] $\cM=\ker[K,L]$ for some $K,L\in\K^{n\times n}$ with $KL^*=LK^*$ and $\rk[K,L]= n$.
\end{itemize}
\end{lemma}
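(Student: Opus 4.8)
The plan is to prove the symmetry characterization first and then use it, together with the dimension formula \eqref{eq:dimM*}, to get the equivalences (a)--(c).

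\textbf{Step 1: Symmetry $\Leftrightarrow$ range representation with $G^*F = F^*G$.}
Write $\cM = \ran\begin{smallbmatrix}F\\ G\end{smallbmatrix}$ (such a representation always exists, as recalled after \eqref{eq:dimM*}). By \eqref{Lstar} we have $\cM^* = \ran\begin{smallbmatrix}L^*\\ -K^*\end{smallbmatrix}$ when $\cM = \ker[K,L]$; but it is cleaner here to compute $\cM^*$ directly from the definition. A pair $(x,y)$ lies in $\cM^*$ iff $\langle w,x\rangle = \langle v,y\rangle$ for all $(v,w)\in\cM$, i.e.\ for all $\xi\in\K^l$, $\langle G\xi,x\rangle = \langle F\xi,y\rangle$, i.e.\ $\xi^*(G^*x - F^*y) = 0$ for all $\xi$, i.e.\ $G^*x = F^*y$. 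So $\cM^* = \ker[G^*,-F^*]$, consistent with \eqref{Lstar}. Now $\cM\subseteq\cM^*$ means: for every $\xi\in\K^l$, the pair $(F\xi,G\xi)$ satisfies $G^*(F\xi) = F^*(G\xi)$, i.e.\ $(G^*F - F^*G)\xi = 0$ for all $\xi$, which is exactly $G^*F = F^*G$. Conversely, if $\cM = \ran\begin{smallbmatrix}F\\ G\end{smallbmatrix}$ with $G^*F = F^*G$, then the same computation shows every element of $\cM$ satisfies the defining condition of $\cM^*$, so $\cM\subseteq\cM^*$. This proves the first assertion; note no restriction on $l$ is needed beyond $l\geq\dim\cM$.

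\textbf{Step 2: The equivalences (a) $\Leftrightarrow$ (b) $\Leftrightarrow$ (c).}
For (a) $\Rightarrow$ (b): if $\cM = \cM^*$, then $\cM$ is symmetric, and by \eqref{eq:dimM*}, $\dim\cM = \dim\cM^* = 2n - \dim\cM$, forcing $\dim\cM = n$. For (b) $\Rightarrow$ (a): if $\cM\subseteq\cM^*$ and $\dim\cM = n$, then again $\dim\cM^* = 2n - n = n = \dim\cM$, and a subspace contained in another of the same dimension must coincide with it, so $\cM = \cM^*$. For (b) $\Leftrightarrow$ (c): given (b), by Step 1 write $\cM = \ran\begin{smallbmatrix}F\\ G\end{smallbmatrix}$ with $G^*F = F^*G$; choosing the representation with $l = n$ and $\rk\begin{smallbmatrix}F\\ G\end{smallbmatrix} = \dim\cM = n$ (possible since $\dim\cM = n$, by discarding redundant columns), set $\cM = \ker[K,L]$ via \eqref{Lstar} applied to $\cM^* = \cM$: indeed $\cM = \cM^* = \ker[G^*,-F^*]$, so take $K = G^*$, $L = -F^*$; then $KL^* = -G^*F = -F^*G$ and $LK^* = -F^*G$, so $KL^* = LK^*$, and $\rk[K,L] = \rk[G^*,-F^*] = \rk\begin{smallbmatrix}F\\ G\end{smallbmatrix}^* = n$. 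Conversely, given (c), $\cM = \ker[K,L]$ with $K,L\in\K^{n\times n}$, $KL^* = LK^*$, $\rk[K,L] = n$; then $\dim\cM = 2n - \rk[K,L] = n$, and by \eqref{Lstar}, $\cM^* = \ran\begin{smallbmatrix}L^*\\ -K^*\end{smallbmatrix}$, whose defining matrices $F = L^*$, $G = -K^*$ satisfy $G^*F = -KL^* = -LK^* = F^*G$, so $\cM^*$ is symmetric; combined with $\dim\cM^* = 2n - n = n$ and the already proven (b) $\Rightarrow$ (a) applied to $\cM^*$, we get $\cM^* = (\cM^*)^* = \cM$, hence $\cM$ is self-adjoint, giving (a), and in particular (b).

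\textbf{Anticipated main obstacle.} The computations are all routine once the dictionary $\cM^* = \ker[G^*,-F^*] = \ran\begin{smallbmatrix}L^*\\ -K^*\end{smallbmatrix}$ from \eqref{Lstar} and the dimension formula \eqref{eq:dimM*} are in hand. The one point requiring slight care is the bookkeeping of signs and transposes when passing between kernel and range representations and their adjoints (e.g.\ that $KL^* = LK^*$ for the kernel data corresponds to $F^*G = G^*F$ for the range data), and the observation that one may always shrink a range representation to one with full column rank $l = \dim\cM$, which is what lets (b) feed into (c). I would state these two facts explicitly at the start so that the rest is a direct verification.
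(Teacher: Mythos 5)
Your proposal is correct and follows essentially the same route as the paper's proof: the kernel/range dictionary \eqref{Lstar}, the dimension formula \eqref{eq:dimM*}, and the choices $K=G^*$, $L=-F^*$. The only cosmetic difference is that you close ``(c)$\Rightarrow$(a)'' via symmetry of $\cM^*$ together with the (finite-dimensional, and easily justified from \eqref{eq:dimM*} and $\cM\subseteq(\cM^*)^*$) identity $(\cM^*)^*=\cM$, whereas the paper shows $\cM^*\subseteq\cM$ by a direct element computation and a dimension count; both are fine.
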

\begin{proof}
To prove the first equivalence, assume that $\cM\subset\K^{2n}$ is symmetric and let $F,G\in\K^{n\times l}$ such that $\cM=\ran \begin{smallbmatrix}F\\ G\end{smallbmatrix}$. The symmetry of $\cM$ together with \eqref{Lstar} now implies that
\[\forall\, z\in\K^n:\;0=[G^*,-F^*]\underbrace{\begin{smallbmatrix}F\\ G\end{smallbmatrix}z}_{\in\cM\subset\cM^*}=(G^*F-F^*G)z,\]
whence $G^*F=F^*G$.\\
Conversely, assume that
$\cM=\ran\begin{smallbmatrix}F\\ G\end{smallbmatrix}$ for some $F,G\in\K^{n\times l}$ with $G^*F=F^*G$. Let $(x_1,y_1),(x_2,y_2)\in\cM$. Then there exists some $z_1,z_2\in\K^n$ with $x_1=Fz_1$, $y=Gz_1$, $x_2=Fz_2$ and $y_2=Gz_2$. Then
\[\langle y_2,x_1\rangle=\langle Gz_2,Fz_1\rangle
=\langle z_2,G^*Fz_1\rangle
=\langle z_2,F^*Gz_1\rangle
=\langle Fz_2,Gz_1\rangle
=\langle x_2,y_1\rangle,\]
i.e., $\cM$ is symmetric. We now show the equivalences (a)-(c). \\
``(a)$\Rightarrow$(b)'': If $\cM\subset\K^{2n}$ is self-adjoint, then, by \eqref{eq:dimM*}, \[\dim \cM=\dim \cM^*=2n-\dim \cM,\] which gives $\dim\cM=n$.\\
``(b)$\Rightarrow$(c)'': Assume that $\cM\subset\K^{2n}$ is symmetric and $\dim\cM=n$. By the first equivalence there exist $F,G\in\K^{n\times n}$ such that $\cM=\ran \begin{smallbmatrix}F\\ G\end{smallbmatrix}$ and $G^*F=F^*G$. Since $\cM=\cM^*$, the choices of $K=G^*$ and $L=-F^*$ together with \eqref{Lstar} lead to $\cM=\ker[K,L]$ with $KL^*=LK^*$. Further, we have
\[n=\dim\cM=\rk\begin{smallbmatrix}F\\ G\end{smallbmatrix}=\rk[K,L].\]
``(c)$\Rightarrow$(a)'': Assume that $\cM=\ker[K,L]$ for
$K,L\in\K^{n\times n}$ with $\rk[K,L]= n$ and $KL^*=LK^*$. Then, by \eqref{Lstar}, $\cM^*=\ran \begin{smallbmatrix}L^*\\ -K^*\end{smallbmatrix}$. Assume that $(x,y)\in\cM^*$. Then there exists some $z\in\K^n$ with $x=L^*z$ and $y=-K^*z$. This yields
\[[K,L]\begin{smallpmatrix}x\\y\end{smallpmatrix}=Kx+Ly=KL^*z-LK^*z=0.\] Altogether we obtain that $\cM^*\subset\cM$. On the other hand, we obtain from $\rk[K,L]= n$ that $\dim\cM=\dim\ker[K,L]=n$ and $\dim\cM^*=\rk\begin{smallbmatrix}L^*\\ -K^*\end{smallbmatrix}=n$, which, together with $\cM^*\subset\cM$ leads to $\cM^*=\cM$.\hfill
\end{proof}

\begin{remark}\label{rem:symequiv}
Note that Lemma~\ref{lem:symequiv} can be further modified to characterize skew-adjointness of a~linear relation $\cM$. In particular, it is analogous to prove that the following statements are equivalent.
\begin{itemize}
    \item[\rm (a)] $\cM$ is skew-adjoint,
    \item[\rm (b)] $\cM$ is skew-symmetric and $\dim\cM=n$,
    \item[\rm (c)] $\cM=\ker[K,L]$ for some $K,L\in\K^{n\times n}$ with $KL^*=-LK^*$ and $\rk[K,L]= n$.
\end{itemize}
Moreover, the following statements are equivalent.
\begin{itemize}
    \item[\rm (d)] $\cM$ is skew-symmetric,
    \item[\rm (e)] $\cM=\ran\begin{smallbmatrix}F\\ G\end{smallbmatrix}$ for some $F,G\in\K^{n\times l}$ with $G^*F=-F^*G$,
    \item[\rm (f)] $\re\langle x,y\rangle=0$ for all $(x,y)\in\cM$.
\end{itemize}
The equivalence of (d) and (e) can be derived from the same modifications, whereas the equivalence of (e) and (f) follows from considering
\[\re\langle x,y\rangle= \tfrac{1}{2}(\langle x,y\rangle+\langle y,x\rangle)=z^*(F^*G+G^*F)z,\] for $(x,y)=(Fz,Gz)\in\cM$ with $z\in\K^l$ given by the range representation $\cM=\ran\begin{smallbmatrix}
F\\G
\end{smallbmatrix}$.
\end{remark}


\begin{definition}[dissipative, nonnegative]\label{defdissip}
Let $\cM\subset\K^{2n}$ be a~linear relation. Then $\cM$ is called
\begin{itemize}
    \item[\rm (a)]\emph{dissipative}, if
\begin{align*}
\re\langle x,y\rangle\leq 0, \quad \text{for all $(x,y)\in\cM$}.
\end{align*}
\item[\rm (b)]\emph{nonnegative}, denoted by $\cM\geq 0$, if $\cM$ is symmetric with
\begin{align*}
\langle x,y\rangle\geq 0, \quad \text{for all $(x,y)\in\cM$}.
\end{align*}
\item[\rm (c)]\emph{maximally dissipative},
if it dissipative, and it is not a~proper subspace of a dissipative linear relation.
\item[\rm (d)]\emph{maximally nonnegative},
if it is nonnegative, and it is not a~proper subspace of a~nonnegative linear relation.
\end{itemize}
\end{definition}

\begin{figure}
	\centering
	\includegraphics[width=\textwidth]{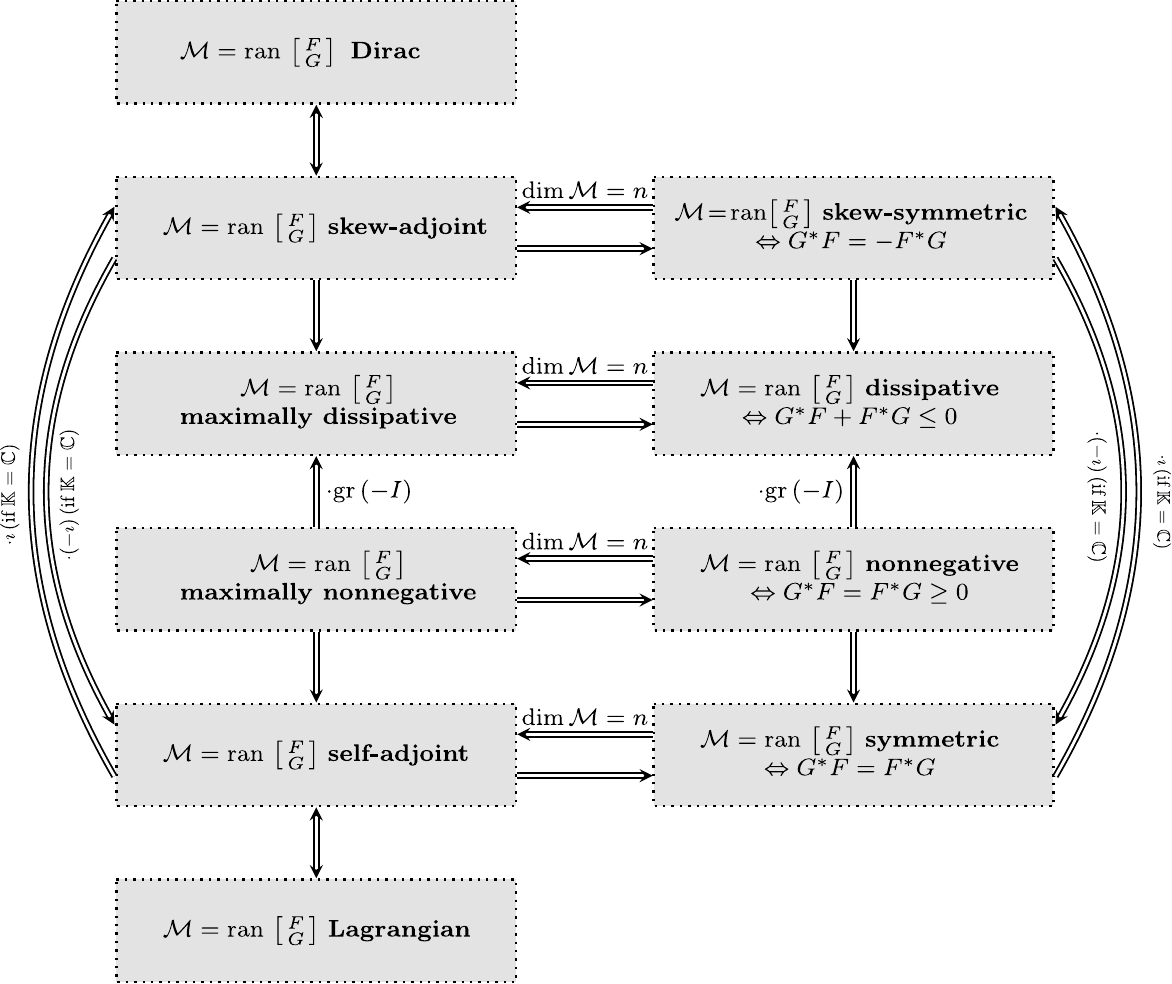}
	\caption{An overview of the structural assumptions on the subspace $\cM$ in range representation with $F,G\in\K^{n\times n}$.}
    \label{fig:subspaces}
\end{figure}

We would like to remark, that other definitions of dissipative linear relations exists in the literature. For example in \cite[Def.~1.6.1]{BehrHassdeSn20} a linear relation $\cM\subseteq\C^{2n}$ is called dissipative if $\im\langle x,y\rangle\geq 0$ for all $(x,y)\in\cM$. However, if $\cM$ is dissipative in the sense of Definition \ref{defdissip} then $-\imath\cM$ is dissipative in the aforementioned sense and vice versa. In the context of port-Hamiltonian systems, Dirac subspaces correspond exactly to the skew-adjoint linear relations, and Lagrange subspaces exaclty to the self adoint linear relations. In particular, Dirac subspaces are maximally dissipative linear relations, and Lagrangian subspaces are maximally nonnegative linear relations, but the converse is not true in general, see Figure~\ref{fig:subspaces}.

Now we collect some basic results on linear relations. As a consequence of Lemma \ref{lem:symequiv} and Remark \ref{rem:symequiv}, we can characterize nonnegativity and dissipativity as follows.
\begin{lemma}
\label{lem:nonneg}
Let $\cM=\ran\begin{smallbmatrix}F\\G\end{smallbmatrix}$ with $F,G\in\K^{n\times l}$ be a linear relation. Then $\cM$ is nonnegative if, and only if, $G^*F=F^*G\geq0$ and dissipative if, and only if, $G^*F+F^*G\leq 0$. Moreover, the following statements are equivalent.
\begin{itemize}
    \item[\rm (a)] $\cM$ is maximally nonnegative.
    \item[\rm (b)] $\cM$ is nonnegative and $\dim\cM=n$.
    \item[\rm (c)] $\cM$ is nonnegative and self-adjoint.
\end{itemize}
Further, $\cM$ is maximally dissipative if, and only if, $\dim\cM=n$ and $G^*F+F^*G\leq 0$.
\end{lemma}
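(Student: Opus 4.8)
The plan is to reduce everything to the scalar characterizations $\langle x,y\rangle = z^*G^*Fz$ (resp.\ $2\re\langle x,y\rangle = z^*(G^*F+F^*G)z$) for $(x,y)=(Fz,Gz)\in\cM$ with $z\in\K^l$, and then to combine the symmetry criterion from Lemma~\ref{lem:symequiv} with dimension counting. First I would prove the two ``if and only if'' statements. For nonnegativity: by definition $\cM\geq 0$ means $\cM$ is symmetric and $\langle x,y\rangle\geq 0$ on $\cM$; by the first equivalence in Lemma~\ref{lem:symequiv}, symmetry of $\cM=\ran\begin{smallbmatrix}F\\G\end{smallbmatrix}$ is equivalent to $G^*F=F^*G$, and given this, $\langle x,y\rangle = \langle Fz,Gz\rangle = z^*G^*Fz \geq 0$ for all $z$ is exactly $G^*F\geq 0$ (here one uses that $G^*F=F^*G$ is already Hermitian, so the quadratic form condition is meaningful). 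For dissipativity: by Remark~\ref{rem:symequiv}(f) and the accompanying computation, $\re\langle x,y\rangle = z^*(F^*G+G^*F)z/2$, so $\re\langle x,y\rangle\leq 0$ for all $(x,y)\in\cM$ is equivalent to the Hermitian matrix $G^*F+F^*G$ being negative semi-definite. No symmetry of $\cM$ is needed here, consistent with Definition~\ref{defdissip}(a).

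Next I would establish the equivalence of (a), (b), (c) for maximal nonnegativity. The implication ``(c)$\Rightarrow$(b)'' is immediate from Lemma~\ref{lem:symequiv}: self-adjoint implies $\dim\cM=n$. For ``(b)$\Rightarrow$(c)'': if $\cM$ is nonnegative (hence symmetric) with $\dim\cM=n$, then by Lemma~\ref{lem:symequiv} (b)$\Rightarrow$(a) it is self-adjoint. It remains to connect (a) with (b)--(c). For ``(c)$\Rightarrow$(a)'': suppose $\cM$ is nonnegative and self-adjoint, and suppose $\cM\subseteq\cN$ with $\cN$ nonnegative, hence symmetric, so $\cN\subseteq\cN^*$; applying $\cdot^*$ to $\cM\subseteq\cN$ reverses the inclusion, $\cN^*\subseteq\cM^*=\cM$, so $\cN\subseteq\cN^*\subseteq\cM\subseteq\cN$, giving $\cN=\cM$; thus $\cM$ is maximally nonnegative. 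For ``(a)$\Rightarrow$(b)'' (or ``(a)$\Rightarrow$(c)''): given $\cM$ maximally nonnegative, I would show it is self-adjoint by extending it if it were not. Since $\cM$ is symmetric, $\cM\subseteq\cM^*$; if $\cM\subsetneq\cM^*$, pick $(v,w)\in\cM^*\setminus\cM$ and consider $\cN=\cM\widehat{\dot+}\Span\{(v,w)\}$ (or, to keep things within nonnegative relations, one may need to pick $(v,w)$ suitably). The subtlety is that $\cN$ so constructed is automatically symmetric — because $(v,w)\in\cM^*$ means it is ``compatible'' with all of $\cM$, and self-compatibility $\langle w,v\rangle\in\R$ follows since $\cM^*$ is itself symmetric when... — but one must also check $\langle x,y\rangle\geq 0$ on all of $\cN$, which is where the real work lies.

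The main obstacle I anticipate is precisely this last step: showing that a proper symmetric extension of a nonnegative but non-self-adjoint $\cM$ can be chosen to remain nonnegative, contradicting maximality. This is the finite-dimensional analogue of the fact that a nonnegative (positive semi-definite) symmetric relation always has a nonnegative self-adjoint extension — a Krein--von Neumann type statement. In finite dimensions I would argue concretely: write $\cM=\ran\begin{smallbmatrix}F\\G\end{smallbmatrix}$ with $F,G\in\K^{n\times r}$, $r=\dim\cM<n$, $G^*F=F^*G\geq 0$; the idea is to enlarge the image representation by appending columns. One clean route: pass to a basis in which the symmetric relation $\cM$ is put in a normal form (e.g.\ $\cM = \{(x,y): \text{certain block structure}\}$), observe that the ``missing'' directions in $\cM^*\ominus\cM$ can be split into ones on which the form $\langle x,y\rangle$ vanishes, and adjoin those. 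Alternatively, and probably cleaner for the paper, I would invoke the equivalence (b)$\Leftrightarrow$(c) together with the fact that every nonnegative relation of dimension $<n$ sits inside one of dimension exactly $n$ that is still nonnegative; this may itself be cited or proved by the column-appending argument. Finally, for the last sentence of the lemma, $\cM$ is maximally dissipative iff $\dim\cM=n$ and $G^*F+F^*G\leq 0$: the ``only if'' direction again needs the extension argument (a dissipative relation of dimension $<n$ is not maximal), while the ``if'' direction follows since any dissipative $\cN\supseteq\cM$ has $\dim\cN\leq n$ by a dimension bound analogous to \eqref{eq:dimM*} applied to the dissipativity condition, forcing $\cN=\cM$.
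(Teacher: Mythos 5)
Your overall skeleton matches the paper's: reduce everything to the quadratic forms $z^*G^*Fz$ and $z^*(F^*G+G^*F)z$ via the range representation, use Lemma~\ref{lem:symequiv} for the symmetry/self-adjointness part, and settle (b)$\Leftrightarrow$(c) and (b),(c)$\Rightarrow$(a) by dimension counting (your (c)$\Rightarrow$(a) via $\cN\subseteq\cN^*\subseteq\cM^*=\cM$ is a clean variant of the paper's argument). The genuine gap is that the two steps carrying the actual content are left as placeholders. First, for the dissipative characterization you need both that dissipativity forces $\dim\cM\le n$ and that a dissipative relation of dimension $<n$ is never maximal. Your proposed ``dimension bound analogous to \eqref{eq:dimM*}'' does not exist here: \eqref{eq:dimM*} rests on $(-\cM^*)^{-1}=\cM^\perp$, and a merely dissipative relation satisfies no inclusion between $\cM$ and $\pm\cM^*$, so no adjoint-based count is available. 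The paper's tool, which your plan never produces, is Sylvester's inertia theorem applied to $F^*G+G^*F=\begin{smallbmatrix}F\\G\end{smallbmatrix}^*\begin{smallbmatrix}0&I_n\\I_n&0\end{smallbmatrix}\begin{smallbmatrix}F\\G\end{smallbmatrix}$: since the middle matrix has $n$ positive and $n$ negative eigenvalues, any subspace of $\K^{2n}$ of dimension $>n$ contains a vector with $\re\langle x,y\rangle>0$ (so dissipative implies $\dim\cM\le n$, and every $n$-dimensional dissipative relation is maximal), and the same inertia count shows that a dissipative subspace of dimension $<n$ can be properly enlarged while staying dissipative. With that bound your ``if'' direction goes through exactly as you outline; without it, both directions of the last assertion are unproven.

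Second, for (a)$\Rightarrow$(b) in the nonnegative case you correctly locate the difficulty but do not resolve it, and the route you sketch would fail as stated: you cannot adjoin an arbitrary $(v,w)\in\cM^*\setminus\cM$, because the adjoint of a nonnegative relation need not be nonnegative (for $\cM=\Span\{(e_1,e_1)\}\subset\K^2\times\K^2$ one has $(e_2,-e_2)\in\cM^*$ with $\langle e_2,-e_2\rangle<0$), so the extension must be chosen carefully and its nonnegativity verified --- precisely the Krein--von Neumann-type step your proposal defers (``this may itself be cited or proved''). The paper does not construct an extension at all: for (a)$\Rightarrow$(b) it argues via the adjoint, using symmetry to get $\cM\subseteq\cM^*$, asserting nonnegativity of $\cM^*$ and invoking maximality to conclude $\cM=\cM^*$, whence $\dim\cM=n$ by Lemma~\ref{lem:symequiv}; for the dissipative analogue it uses the Sylvester argument above. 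So while your plan is structurally aligned with the paper, the steps that make the two maximality statements true are missing rather than merely compressed.
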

\begin{proof}
For the first two equivalences, observe that the range representation yields
\[\langle x,y\rangle\geq 0, \quad \text{for all $(x,y)\in\cM$}\quad\Longleftrightarrow\quad z^*F^*Gz\geq 0,\quad\text{for all $z\in\K^n$}\]
and
\[\re\langle x,y\rangle\leq 0, \quad \text{for all $(x,y)\in\cM$}\quad\Longleftrightarrow\quad z^*(F^*G+G^*F)z\leq 0,\quad\text{for all $z\in\K^n$}.\]
The statements then follows directly from Lemma \ref{lem:symequiv}.
We now show the equivalences (a)-(c). \\
``(a)$\Longrightarrow$(b)'': Assume that $\cM$ is maximally nonnegative. Then it follows from the definition nonnegativity that $\cM^*$ is nonnegative as well. By the symmetry of $\cM$, we further have $\cM\subset\cM^*$, and maximality leads to $\cM=\cM^*$. Thus by Lemma~\ref{lem:symequiv}, $\dim\cM=n$.\\
``(b)$\Longrightarrow$(a)'': Let $\cM$ be nonnegative with $\dim\cM=n$. Then $\cM$ is in particular symmetric with $\dim\cM=n$, whence, by Lemma~\ref{lem:symequiv}, it is not a~proper subspace of a~symmetric relation. In particular, it is not a~proper subspace of a~nonnegative relation. That is, $\cM$ is maximally nonnegative.\\
``(b)$\Longleftrightarrow$(c)'': This equivalence is a direct consequence of the equivalence of the statements (a) and (b) of Lemma \ref{lem:symequiv}.\\
It remains to prove the last equivalence for dissipative relations. Assume that $\cM=\ran\begin{smallbmatrix}F\\G\end{smallbmatrix}$ is dissipative. First note
\[F^*G+G^*F=\begin{smallbmatrix}F\\G\end{smallbmatrix}^*\begin{smallbmatrix}0&I_n\\I_n&0\end{smallbmatrix}\begin{smallbmatrix}F\\G\end{smallbmatrix}\leq0\]
and that $\begin{smallbmatrix}0&I_n\\I_n&0\end{smallbmatrix}$ has $n$ positive and $n$ negative eigenvalues. If $\dim\cM>n$, then Sylvester's inertia theorem \cite[Thm.~4.5.8]{HornJohn13} yields that $F^*G+G^*F$ has to have at least one positive eigenvalue. Consequently, any $n$-dimensional dissipative relation is maximal. On the other hand, if $\cM$ is dissipative with $\dim\cM<n$, we can, again by employing Sylvester's inertia theorem, infer that $\cM$ can be further extended to a~linear relation which is still dissipative.
\hfill
\end{proof}

\begin{lemma}
\label{lem:dissprop}
Let $\cM=\ran\begin{smallbmatrix}F\\G\end{smallbmatrix}$ with $F,G\in\K^{n\times l}$ be a dissipative (symmetric) linear relation. Then $\dom\cM\subseteq(\mul\cM)^\perp$ and $\ran\cM\subseteq(\ker\cM)^{\perp}$.
Furthermore, the following three statements are equivalent:
\begin{enumerate}[(i)]
\item $\cM$ is maximally dissipative (self-adjoint).
\item $\cM$ is dissipative (symmetric) and $\dom\cM=(\mul\cM)^\perp$.
\item $\cM$ is dissipative (symmetric) and $\ran\cM=(\ker\cM)^\perp$.
\end{enumerate}
\end{lemma}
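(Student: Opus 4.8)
The plan is to prove the inclusions first and then the three equivalences, handling the dissipative and symmetric cases in parallel since the arguments are structurally identical.

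\textbf{Step 1: the inclusions $\dom\cM\subseteq(\mul\cM)^\perp$ and $\ran\cM\subseteq(\ker\cM)^\perp$.}
First I would observe that for a dissipative relation, $(x,y),(x',y')\in\cM$ implies $(x+tx',y+ty')\in\cM$ for all scalars $t$, and expanding $\re\langle x+tx',y+ty'\rangle\le 0$ and letting $t\to 0$ from both sides (and, over $\C$, also along $t=\imath s$) forces $\re\langle x,y'\rangle+\re\langle x',y\rangle\le 0$ with both signs, hence $\re\langle x,y'\rangle=-\re\langle x',y\rangle$; over $\C$ the extra direction upgrades this to $\langle x,y'\rangle=-\langle y,x'\rangle$, and in the symmetric case Lemma~\ref{lem:symequiv} gives directly $\langle x,y'\rangle=\langle x',y\rangle$ for all pairs. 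Now take $y'\in\mul\cM$, i.e.\ $(0,y')\in\cM$: then for any $x\in\dom\cM$, choosing $(x,y)\in\cM$, the identity with $x'=0$ yields $\re\langle x,y'\rangle=0$ in the dissipative case (and $\langle x,y'\rangle=0$ in the symmetric case). Actually for the dissipative case one must be slightly careful: applying the bilinear inequality to the pair $(x,y)$ and $(0,y')$ gives $\re\langle x,y'\rangle\le 0$ and, using $(0,-y')\in\cM$, also $\re\langle x,y'\rangle\ge 0$, so $\re\langle x,y'\rangle=0$; over $\C$ one additionally uses $(0,\imath y')\in\cM$ to kill the imaginary part, giving $\langle x,y'\rangle=0$, so $x\perp\mul\cM$. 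The inclusion $\ran\cM\subseteq(\ker\cM)^\perp$ follows by the same argument applied to $\cM^{-1}$, which is again dissipative (resp.\ symmetric), since $\mul\cM^{-1}=\ker\cM$ and $\dom\cM^{-1}=\ran\cM$.

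\textbf{Step 2: (i)$\Rightarrow$(ii) and (i)$\Rightarrow$(iii).}
Assume $\cM$ is maximally dissipative (resp.\ self-adjoint). By Lemma~\ref{lem:nonneg} (resp.\ Lemma~\ref{lem:symequiv}), $\dim\cM=n$, and in the self-adjoint case $\cM=\cM^*$. The strategy is a dimension count: by Step~1 we have $\dom\cM\subseteq(\mul\cM)^\perp$, so it suffices to show $\dim\dom\cM=n-\dim\mul\cM=\dim(\mul\cM)^\perp$. Writing $\cM=\ran\begin{smallbmatrix}F\\ G\end{smallbmatrix}$ with $F,G\in\K^{n\times n}$ of full column rank $n$ (possible since $\dim\cM=n$ forces the representation to be injective after reducing $l$ to $n$), one has $\dom\cM=\ran F$, $\mul\cM=\ker F\cap\ran(\text{of the }G\text{-part restricted to }\ker F)$; more precisely $\mul\cM=\{Gz : Fz=0\}$ and, since $\begin{smallbmatrix}F\\ G\end{smallbmatrix}$ is injective, $\dim\mul\cM=\dim\ker F=n-\rk F=n-\dim\dom\cM$. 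Hence $\dim\dom\cM+\dim\mul\cM=n$, which combined with $\dom\cM\subseteq(\mul\cM)^\perp$ (an $n-\dim\mul\cM$ dimensional space) forces equality. For (iii) apply the same count to $\cM^{-1}$, noting it is again maximally dissipative (resp.\ self-adjoint) with $\dim\cM^{-1}=n$.

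\textbf{Step 3: (ii)$\Rightarrow$(i) and (iii)$\Rightarrow$(i).}
Suppose $\cM$ is dissipative (resp.\ symmetric) with $\dom\cM=(\mul\cM)^\perp$, and suppose for contradiction $\cM\subsetneq\cN$ with $\cN$ dissipative (resp.\ symmetric). Pick $(x_0,y_0)\in\cN\setminus\cM$. Since $\cN$ is dissipative (resp.\ symmetric) and $\cM\subseteq\cN$, the bilinear identity from Step~1 applies to pairs in $\cN$; in particular for every $(0,w)\in\cM\subseteq\cN$ we get $\re\langle x_0,w\rangle=0$ (resp.\ $\langle x_0,w\rangle=0$) — over $\C$ use $\imath w$ as well — so $x_0\perp\mul\cM$, hence $x_0\in(\mul\cM)^\perp=\dom\cM$. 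Thus there is $(x_0,\tilde y)\in\cM$, and then $(0,y_0-\tilde y)\in\cN$; if $y_0-\tilde y\neq 0$ this is a nonzero element of $\mul\cN$, and I would derive a contradiction with $\dom\cM=(\mul\cM)^\perp$ by showing $\mul\cN$ must be orthogonal to $\dom\cN\supseteq\dom\cM=(\mul\cM)^\perp$, forcing $\mul\cN\subseteq((\mul\cM)^\perp)^\perp=\mul\cM$ (using Step~1 applied to $\cN$), so $y_0-\tilde y\in\mul\cM\subseteq\cM$ and hence $(x_0,y_0)=(x_0,\tilde y)+(0,y_0-\tilde y)\in\cM$, contradiction. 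If instead $y_0=\tilde y$ then already $(x_0,y_0)\in\cM$, contradiction. The implication (iii)$\Rightarrow$(i) follows by applying this argument to $\cM^{-1}$ and using that $\cM$ is maximal iff $\cM^{-1}$ is.

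\textbf{Main obstacle.} The routine parts are the bilinearization arguments; the delicate point I expect to spend the most care on is the $\C$-versus-$\R$ distinction — over $\R$ one only controls real parts of the pairing, so in the dissipative case every "orthogonality" conclusion must be obtained by playing off $\re\langle\cdot,\cdot\rangle\le 0$ against the same inequality for the negated/scaled element, and one should double-check that $\dom\cM\subseteq(\mul\cM)^\perp$ is genuinely the real-orthogonality statement that the dimension count in Step~2 needs. A secondary subtlety is justifying that a relation with $\dim\cM=n$ admits a range representation with $F,G\in\K^{n\times n}$ and $\begin{smallbmatrix}F\\ G\end{smallbmatrix}$ injective, which follows from the general existence of image representations cited after \eqref{eq:dimM*} by taking $l=\dim\cM=n$.
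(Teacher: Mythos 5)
Your proposal is correct, but it follows a partly different route from the paper's. The paper does not prove the inclusion $\dom\cM\subseteq(\mul\cM)^\perp$ or the implication (i)$\Rightarrow$(ii) itself: it cites \cite{AzizDijkWanj13} and \cite{BehrHassdeSn20} for these, derives $\ran\cM\subseteq(\ker\cM)^\perp$ and (ii)$\Leftrightarrow$(iii) by passing to $\cM^{-1}$ (as you do), and its own work is (ii)$\Rightarrow$(i), which it proves by a dimension count ($\dim\cM\geq\dim\dom\cM+\dim\mul\cM=n$) followed by the characterizations of maximality in Lemma~\ref{lem:nonneg} resp.\ Lemma~\ref{lem:symequiv}. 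You instead make everything self-contained: you get the inclusions from the scaling argument $(x,y+ty')\in\cM$ (with the $\imath$-trick over $\C$), you prove (i)$\Rightarrow$(ii) via the rank--nullity identity $\dim\cM=\dim\dom\cM+\dim\mul\cM$ combined with $\dim\cM=n$ from maximality, and you prove (ii)$\Rightarrow$(i) by directly excluding a proper dissipative (symmetric) extension $\mathcal{N}$, showing $x_0\in(\mul\cM)^\perp=\dom\cM$ and $\mul\mathcal{N}\subseteq(\dom\mathcal{N})^\perp\subseteq\mul\cM$, so that $\mathcal{N}\subseteq\cM$. This avoids invoking the dimension-$n$ characterization of maximal dissipativity (and hence the Sylvester-inertia argument behind Lemma~\ref{lem:nonneg}) at that step, at the cost of being longer; the paper's version is shorter because it outsources the standard inclusions and reuses its earlier lemmas.

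One caveat, which is a blemish rather than a gap: the general claim opening your Step~1 --- that any dissipative relation satisfies $\re\langle x,y'\rangle=-\re\langle x',y\rangle$ for all pairs, upgraded over $\C$ to $\langle x,y'\rangle=-\langle y,x'\rangle$ --- is false (take $\cM=\gr(-1)\subset\K^2$ and both pairs equal to $(1,-1)$), and ``letting $t\to0$'' proves nothing: the inequality $\re\langle x,y\rangle+t\bigl(\re\langle x,y'\rangle+\re\langle x',y\rangle\bigr)+t^2\re\langle x',y'\rangle\leq0$ for all real $t$ forces the linear coefficient to vanish only when the quadratic coefficient vanishes, i.e.\ precisely in the case $x'=0$ that you actually need. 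Since your subsequent ``careful'' derivation and every later use (including Step~3) only invoke this special case, the proof stands; just delete the spurious general identity.
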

\begin{proof}
The statement $\dom\cM\subseteq(\mul\cM)^\perp$ as well as the implication ``(i)$\Longrightarrow$(ii)'' has been proven in \cite[Lem.~2.1]{AzizDijkWanj13} for the dissipative case, and in \cite[Prop.~1.3.2]{BehrHassdeSn20} for the symmetric case. Further, if $\cM$ is dissipative (symmetric), so is $\cM^{-1}$ by Lemma~\ref{lem:symequiv}. Hence, $\ker\cM=\mul(\cM^{-1})\subseteq\dom(\cM^{-1})^{\perp}=(\ran\cM)^{\perp}$.\\
``(ii)$\Longrightarrow$(i)'': Let $\cM$ be dissipative  or symmetric and, additionally, assume that $\dom\cM=(\mul\cM)^\perp$. For $k\coloneqq\dim\dom\cM$, let $(x_1,\ldots,x_k)$ be a~basis of $\dom\cM$. Then there exist $y_1,\ldots,y_k\in\K^n$, such that $(x_i,y_i)\in\cM$ for $i=1,\ldots,k$. Then we have
\[\Span\left\{(x_1,y_k),\ldots,(x_k,y_k)\right\}\cap (\{0\}\times\mul\cM)=\{0\}.\]
Since, further, $\{0\}\times\mul\cM\subseteq\cM$, we obtain that
\[\Span\left\{(x_1,y_k),\ldots,(x_k,y_k)\right\}\cap (\{0\}\times\mul\cM)\subset\cM,\]
and thus
\[\dim\cM\geq \dim\dom\cM+\dim\mul\cM=\dim(\mul\cM)^\perp+\dim\mul\cM=n.\]
Then Lemma~\ref{lem:nonneg} (resp.\ Lemma~\ref{lem:symequiv}) imply that $\cM$ is maximally dissipative (self-adjoint).\\
``(ii)$\Longleftrightarrow$(iii)'': This follows by the already proven equivalence between (i) and (ii), together with $\dom\cM=\ran\cM^{-1}$, $\mul\cM=\ker\cM^{-1}$, and the fact that $\cM$ is dissipative (maximally dissipative, symmetric, self-adjoint) if, and only if, the inverse $\cM^{-1}$ has the respective property.
\hfill
\end{proof}

\begin{proposition}
\label{prop:graph}
Let $\cM=\ran\begin{smallbmatrix}
F\\G
\end{smallbmatrix}$ with  $F,G\in\K^{n\times l}$ be a linear relation with $\dim\cM=n$. Then $\cM=\gr M$ for some $M\in\K^{n\times n}$ if, and only if, $\rk F=n$.\\
In this case, $\cM$ is self-adjoint (skew-adjoint, maximally nonnegative, maximally dissipative) if, and only if, $M$ is Hermitian (skew-Hermitian, positive semi-definite, dissipative).
\end{proposition}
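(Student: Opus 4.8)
The plan is to dispose of the graph characterization by a direct argument on the domain and multivalued part, and then to obtain the four structural equivalences by feeding the canonical representation $\begin{smallbmatrix}I\\ M\end{smallbmatrix}$ of a graph into Lemma~\ref{lem:symequiv}, Remark~\ref{rem:symequiv} and Lemma~\ref{lem:nonneg}. For the equivalence ``$\cM=\gr M$ for some $M$ $\iff$ $\rk F=n$'': if $\cM=\gr M$, then every $x\in\K^n$ is the first component of an element of $\cM$, i.e.\ $\dom\cM=\K^n$; since $\dom\cM=\ran F$ for the given range representation, this forces $\rk F=n$. For the converse, assume $\rk F=n$, so that $\dom\cM=\ran F=\K^n$ already. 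The key step is to show $\ker F\subseteq\ker G$: by rank--nullity $\dim\ker F=l-n$, while $\dim\ker\begin{smallbmatrix}F\\ G\end{smallbmatrix}=l-\dim\cM=l-n$ as well, and since $\ker\begin{smallbmatrix}F\\ G\end{smallbmatrix}=\ker F\cap\ker G\subseteq\ker F$, equality of dimensions yields $\ker F\subseteq\ker G$. Hence $G$ factors through $F$, say $G=MF$ with $M\in\K^{n\times n}$, and then
\[
\cM=\ran\begin{smallbmatrix}F\\ MF\end{smallbmatrix}=\ran\left(\begin{smallbmatrix}I\\ M\end{smallbmatrix}F\right)=\ran\begin{smallbmatrix}I\\ M\end{smallbmatrix}=\gr M ,
\]
using $\ran F=\K^n$ in the third equality. (Alternatively, $\mul\cM=G(\ker F)=\{0\}$ together with $\dom\cM=\K^n$ exhibits $\cM$ as the graph of an everywhere-defined linear operator.)

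For the four structural equivalences, write $\cM=\gr M=\ran\begin{smallbmatrix}I\\ M\end{smallbmatrix}$, so that we may apply the earlier lemmas with $F=I$, $G=M$, observing that $\dim\cM=n$ holds automatically. Then $G^*F=F^*G$ reads $M=M^*$, so Lemma~\ref{lem:symequiv} gives that $\cM$ is symmetric iff $M$ is Hermitian, and, since $\dim\cM=n$, this is the same as $\cM$ being self-adjoint; the analogous statement of Remark~\ref{rem:symequiv} gives $\cM$ skew-adjoint iff $M$ is skew-Hermitian. Likewise, Lemma~\ref{lem:nonneg} applied to this representation gives that $\cM$ is nonnegative iff $M=M^*\geq0$ and, since maximal nonnegativity amounts to nonnegativity together with $\dim\cM=n$, that $\cM$ is maximally nonnegative iff $M$ is positive semi-definite; its final assertion gives that $\cM$ is maximally dissipative iff $\dim\cM=n$ and $M+M^*\leq0$, i.e.\ iff $M$ is dissipative.

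The only genuine step is the factorization in the converse of the graph characterization, namely justifying $\ker F\subseteq\ker G$ through the dimension count and then passing to $G=MF$; everything else is bookkeeping around the representation $\begin{smallbmatrix}I\\ M\end{smallbmatrix}$. One minor point: Lemma~\ref{lem:symequiv} phrases symmetry via the \emph{existence} of a range representation with $G^*F=F^*G$ rather than a fixed one, but its proof actually establishes $G^*F=F^*G$ for \emph{any} given range representation of a symmetric $\cM$, which is precisely what lets us read off $M=M^*$ from $F=I$, $G=M$; in any case, for $\cM=\gr M$ symmetry is directly equivalent to $\langle Mx_2,x_1\rangle=\langle x_2,Mx_1\rangle$ for all $x_1,x_2\in\K^n$, hence to $M=M^*$, which could be checked in one line instead.
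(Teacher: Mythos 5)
Your proof is correct and follows essentially the same route as the paper: the graph characterization rests on the same dimension count using $\dim\cM=n$ and $\rk F=n$ (the paper builds $M$ from preimages of the canonical basis vectors and compares dimensions of $\ran\begin{smallbmatrix}I\\ M\end{smallbmatrix}$ and $\cM$, whereas you first deduce $\ker F\subseteq\ker G$ and factor $G=MF$ -- the same idea, repackaged), and the structural equivalences are, as in the paper, obtained by feeding the representation $\begin{smallbmatrix}I\\ M\end{smallbmatrix}$ into the earlier characterization lemmas. Your side remark that the relevant direction of Lemma~\ref{lem:symequiv} holds for any given range representation is a fair and correctly resolved point of care.
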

\begin{proof}
Let $\cM=\ran\begin{smallbmatrix}F\\ G\end{smallbmatrix}$ with $\dim\cM=n$.
If $\cM=\gr M$ for some $M\in\K^{n\times n}$ then $\ran F=\dom\cM=\K^n$ which implies $\rk F=n$. Conversely, let $F\in\K^{n\times l}$ be given with $\rk F=n$. Then $\dom\cM=\ran F=\K^n$. Consider the canonical basis $(e_1,\ldots,e_n)$ of $\K^n$. Then there exist $x_1,\ldots,x_n$ with $Fx_i=e_i$ for $i=1,\ldots,n$. Define
\[M\coloneqq[Gx_1,\ldots,Gx_n]\in \K^{n\times n}.\]
Then, by
$\begin{smallbmatrix}
F\\G
\end{smallbmatrix}x_i=\begin{smallpmatrix}
Fx_i\\Gx_i
\end{smallpmatrix}=\begin{smallpmatrix}
e_i\\Me_i
\end{smallpmatrix}=\begin{smallbmatrix}
I_n\\M
\end{smallbmatrix}e_i$,
we obtain
\[\ran\begin{smallbmatrix}
I_n\\M
\end{smallbmatrix}\subset\ran \begin{smallbmatrix}
F\\G
\end{smallbmatrix}\]
However, since the dimensions of both spaces equal, we even have equality.\\
The second part of the result follows from Lemma~\ref{lem:dissprop} and Lemma~\ref{lem:nonneg}.\hfill
\end{proof}

We close this section with a~technical result, where we present a~certain range representation of the product of a~dissipative and a~symmetric subspace. A~proof of the following proposition can be found in the appendix.
\begin{proposition}
\label{lem:product}
Let $\Dc\subseteq \K^{2n}$ be a~dissipative and $\Lc\subseteq \K^{2n}$ be a~symmetric linear relation, and assume that $\ker\Lc\cap\mul\Dc=\{0\}$. Let $n_1=\dim(\ran\Lc\cap\dom\Dc)$ and $n_2=n-n_1$. Then there exists some unitary matrix $U\in\K^{n\times n}$, such that the product of  $\Dc$ and $\Lc$ has a~representation
\begin{align}
\label{dl_final}
\Dc\Lc=\ran\diag(U,U)\begin{smallbmatrix}
L_{11}&0\\L_{21}& L_{22}\\D_{11}& 0\\ D_{21}& D_{22}
\end{smallbmatrix}
\end{align}
for some matrices $L_{ij},D_{ij}\in\K^{n_i\times n_j}$ with
\begin{align}
\label{L11D11_prop}
L_{11}=&\,L_{11}^*,\quad   &&D_{11}+D_{11}^*\leq 0,\\
L_{22}=&\,L_{22}^2=L_{22}^*, && -D_{22}= D_{22}^2=-D_{22}^*,\quad &&\ran L_{22}\cap\ran D_{22}=\{0\}.\label{l22d22_prop}
\end{align}
Moreover, the following holds:
\begin{itemize}
\item[\rm (i)]
If $\Lc$ is nonnegative then $L_{11}$ is positive semi-definite. If, additionally, $\Lc$ is maximal then $\ker L_{11}\subset\ker L_{21}$.
\item[\rm (ii)]
If $\Dc$ is skew-symmetric then $D_{11}$ is skew-Hermitian. 
\item[\rm (iii)] $\ker L_{22}\cap\ker D_{22}=\{0\}$ if, and only if,  
\[\mul\Dc\widehat \dotplus\ker\Lc=(\ran\Lc)^\perp\widehat +(\dom\Dc)^\perp.\]
\item[\rm (iv)] If, additionally, $\Dc=\gr D$ for some dissipative $D\in\K^{n\times n}$ and $\Lc$ is self-adjoint, then $L_{21}=D_{22}=0$ and $L_{22}=I_{n_2}$. Furthermore, we have
\[\begin{aligned}
\ker L_{11}\times \{0\}=&\,U^*\mul\Lc,\\
\ker D_{11}\times\{0\}=&\,U^*\left\{x\in\ran\Lc\,\vert\;Dx\in \ker\Lc\right\}.
\end{aligned}\]
\item[\rm (v)] If, additionally, $\Dc$ is maximally dissipative and $\Lc=(\gr L)^{-1}$ for some $L\in\K^{n\times n}$, then $L$ is Hermitian, and $D_{22}=-I_{n_2}$, $D_{21}=L_{22}=0$. Furthermore, we have
\[\begin{aligned}
\ker L_{11}\times \{0\}=&\,U^*\left\{x\in\dom\Dc\,\vert\;Lx\in \mul\Dc\right\},\\
\ker D_{11}\times\{0\}=&\,U^*\ker \Dc.
\end{aligned}\]
\end{itemize}
\end{proposition}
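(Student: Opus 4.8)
The plan is to build the representation \eqref{dl_final} in three stages: first reduce to a situation where the ``overlap'' space $\ran\Lc\cap\dom\Dc$ is put in a standard position by a single unitary change of coordinates; then compute the product $\Dc\Lc$ in block form and extract the structural conditions \eqref{L11D11_prop}–\eqref{l22d22_prop}; and finally read off the additional items (i)–(v) by specializing. First I would fix image representations $\Lc=\ran\begin{smallbmatrix}P\\ S\end{smallbmatrix}$ and $\Dc=\ran\begin{smallbmatrix}R\\ W\end{smallbmatrix}$ with $S^*P=P^*S$ and $W^*R+R^*W\le 0$ (Lemma~\ref{lem:symequiv}, Lemma~\ref{lem:nonneg}). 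The product is $\Dc\Lc=\{(x,z)\mid \exists y:\ (x,y)\in\Lc,\ (y,z)\in\Dc\}$, so pairs come from $\begin{smallpmatrix}x\\y\end{smallpmatrix}=\begin{smallbmatrix}P\\S\end{smallbmatrix}a$, $\begin{smallpmatrix}y\\z\end{smallpmatrix}=\begin{smallbmatrix}R\\W\end{smallbmatrix}b$ subject to the coupling $Sa=Rb$; the assumption $\ker\Lc\cap\mul\Dc=\{0\}$ is exactly what makes the set of admissible $(a,b)$ behave well (it rules out spurious contributions to $\mul(\Dc\Lc)$ on the ``coupled'' part).

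**Choosing the unitary $U$.** The key device is the subspace $\Vc\coloneqq\ran\Lc\cap\dom\Dc$ of dimension $n_1$. I would choose an orthonormal basis of $\K^n$ adapted to $\Vc$: let $U$ have its first $n_1$ columns spanning $\Vc$ and its last $n_2$ columns spanning $\Vc^\perp$. Conjugating both coordinates by $U$ (which is why $\diag(U,U)$ appears out front) we may assume $\Vc=\K^{n_1}\times\{0\}$. The block triangularity $L_{12}=D_{12}=0$ — i.e. the $(1,2)$ blocks vanish — reflects that the ``$y$''-component lies in $\Vc$ on the coupled part and in the complementary directions only through the decoupled $L_{22},D_{22}$ pieces. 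The middle block $L_{22}$ is the compression of a symmetric relation onto $\Vc^\perp$ in the sense that only the multivalued/kernel part survives there; by Lemma~\ref{lem:dissprop} (the inclusions $\dom\cM\subseteq(\mul\cM)^\perp$ etc.) this compression degenerates to an orthogonal projector, giving $L_{22}=L_{22}^2=L_{22}^*$, and the same reasoning applied to $\Dc$ yields $-D_{22}=D_{22}^2=-D_{22}^*$, i.e. $D_{22}$ is a negative orthogonal projector. The condition $\ran L_{22}\cap\ran D_{22}=\{0\}$ is then precisely the restatement of $\ker\Lc\cap\mul\Dc=\{0\}$ (after unitary reduction and passing to $\Vc^\perp$, these two kernels become the ranges of the two projectors). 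The inequalities $L_{11}=L_{11}^*$ and $D_{11}+D_{11}^*\le 0$ are inherited directly from $S^*P=P^*S$ and $W^*R+R^*W\le 0$ restricted to the coupled block, using Lemma~\ref{lem:symequiv} and Lemma~\ref{lem:nonneg} again.

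**The five special cases.** Items (i)–(v) should all be short once the normal form is in place. For (i), nonnegativity of $\Lc$ ($S^*P\ge 0$) passes to $L_{11}\ge 0$; if $\Lc$ is maximal it is self-adjoint ($\dim\Lc=n$, Lemma~\ref{lem:nonneg}), and comparing dimensions forces the $(2,1)$ coupling to be subordinate to $L_{11}$, i.e. $\ker L_{11}\subseteq\ker L_{21}$. Item (ii) is the same inheritance as above with ``$\le 0$'' replaced by skew-symmetry, giving $D_{11}=-D_{11}^*$. Item (iii) is a direct dimension/orthogonality bookkeeping: $\ker L_{22}\cap\ker D_{22}=\{0\}$ says the two projectors $I-L_{22}$ and $-D_{22}$ (wait: the complementary projectors) together span $\K^{n_2}$, which unwinds to the stated identity $\mul\Dc\,\widehat\dotplus\,\ker\Lc=(\ran\Lc)^\perp\,\widehat+\,(\dom\Dc)^\perp$ via $\ker\cM=\mul(\cM^{-1})$ and \eqref{identity}. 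For (iv), $\Dc=\gr D$ means $\mul\Dc=\{0\}$, so $D_{22}$ (a negative projector whose range sits inside $\mul\Dc$ after reduction) vanishes and $L_{22}$ becomes a full projector $I_{n_2}$; self-adjointness of $\Lc$ then removes the residual coupling $L_{21}=0$, and the descriptions of $\ker L_{11},\ker D_{11}$ follow by tracing which vectors of $\ran\Lc$ map into $\ker\Lc$ under $D$. Item (v) is item (iv) applied to the inverses: $\Lc=(\gr L)^{-1}$ means $\ker\Lc=\{0\}$, so now $L_{22}=0$ and $D_{22}=-I_{n_2}$, with maximal dissipativity of $\Dc$ killing $D_{21}$.

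**Main obstacle.** The delicate point is the reduction producing the \emph{strictly} block-triangular form with the \emph{idempotent} middle blocks — i.e. establishing $L_{12}=D_{12}=0$, $L_{22}^2=L_{22}$, $D_{22}^2=-D_{22}$ simultaneously with a \emph{single} unitary $U$ acting on \emph{both} components. Getting one relation into such a normal form is routine; the difficulty is that $U$ must work for $\Dc$, $\Lc$, and their interaction through the coupling $Sa=Rb$ all at once, and that the assumption $\ker\Lc\cap\mul\Dc=\{0\}$ has to be invoked at exactly the right moment to prevent the decoupled blocks from overlapping. I expect this to require a careful choice of bases inside $\Vc$ and $\Vc^\perp$ (likely using that on $\Vc^\perp$ the relation $\Lc$ contributes only $\ker\Lc\oplus(\{0\}\times\cdots)$-type pieces, by Lemma~\ref{lem:dissprop}), and the bookkeeping of ranks when verifying equality (rather than mere inclusion) of the two sides — much as in the proof of Proposition~\ref{prop:graph}, where a dimension count upgrades $\subseteq$ to $=$. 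Since the statement says the proof is deferred to the appendix, I would present it there at full length; here the essential content is the choice of $U$ adapted to $\ran\Lc\cap\dom\Dc$ and the identification of the two degenerate blocks as complementary (signed) orthogonal projectors.
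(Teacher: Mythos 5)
Your overall strategy (put $X_1=\ran\Lc\cap\dom\Dc$ in standard position by one unitary $U$ and read off a block representation) is the same as the paper's, but the step that actually carries the proposition is missing. The paper first splits each relation into an operator part and a purely degenerate part, $\Dc=\gr_{\rm op}(D)\,\widehat\oplus\,(\{0\}\times\mul\Dc)$ and $\Lc=\{(Lx,x)\}\,\widehat\oplus\,(\ker\Lc\times\{0\})$ with $D:\dom\Dc\to(\mul\Dc)^\perp$, $L:\ran\Lc\to(\ker\Lc)^\perp$, then proves the product formula $\Dc\Lc=\begin{smallbmatrix}L\\ D\end{smallbmatrix}(X_1)\,\widehat\oplus\,(\ker\Lc\times\{0\})\,\widehat\oplus\,(\{0\}\times\mul\Dc)$ (no hypothesis needed here), and finally shows that the two degenerate summands can be \emph{jointly} re-parametrized over a single copy of $X_2=X_1^\perp$ as the range of $\begin{smallbmatrix}P_{\ker\Lc}\\ -P_{\mul\Dc}\end{smallbmatrix}$ restricted to $\ker\Lc\,\widehat\dotplus\,\mul\Dc$; this last step is an injectivity/dimension argument and is precisely where $\ker\Lc\cap\mul\Dc=\{0\}$ enters, producing the common second block column $\begin{smallbmatrix}0\\ L_{22}\\ 0\\ D_{22}\end{smallbmatrix}$ with $L_{22},D_{22}$ the (signed) projectors and $\ran L_{22}\cap\ran D_{22}=\{0\}$. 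Your sketch misplaces the role of the hypothesis (it is not needed to make the admissible pairs in the product ``behave well''), abandons the announced matrix-representation/coupling device without replacement, and in your ``main obstacle'' paragraph you explicitly leave open exactly this joint parametrization — which is the missing idea, not a routine bookkeeping matter. The correct mechanism for the vanishing $(1,2)$ blocks is also different from what you describe: it is that $L$ and $D$ by construction map into $(\ker\Lc)^\perp$ and $(\mul\Dc)^\perp$, not a ``compression of a symmetric relation degenerating to a projector''.

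A second concrete gap is item (i): the inclusion $\ker L_{11}\subset\ker L_{21}$ is not obtained by ``comparing dimensions''. In the paper, maximality gives $\ran\Lc=(\ker\Lc)^\perp$, so the operator part $L$ is a nonnegative operator on $\ran\Lc$; writing it as a $2\times2$ block operator with respect to $X_1\widehat\oplus(\ran\Lc\widehat\ominus X_1)$ one uses the quadratic form $\bigl\langle L\begin{smallpmatrix}\alpha x\\ z\end{smallpmatrix},\begin{smallpmatrix}\alpha x\\ z\end{smallpmatrix}\bigr\rangle\ge0$ and lets $\alpha\to\infty$ to force $\ker\hat L_{11}\subset\ker\tilde L_{21}$ (the standard kernel inclusion for positive semi-definite block operators), and then identifies $\hat L_{21}=\tilde L_{21}$ because $L$ maps into $\ran\Lc$. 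No dimension count can yield this. Similarly, in (iv) and (v) the identifications of $\ker L_{11}$ and $\ker D_{11}$ (and the Hermiticity of $L$ in (v)) require the explicit formulas $\hat L_{11}=P_{X_1}L|_{X_1}$, $\hat D_{11}=P_{X_1}D|_{X_1}$ together with $X_1=\ran\Lc$ resp.\ $X_1=\dom\Dc$ in those cases; ``tracing which vectors map into $\ker\Lc$'' is the right intuition but is only a proof once that operator-part decomposition from the first step is in place.
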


\section{Port-Hamiltonian formulation via linear relations}
\label{sec:comp}
Our ongoing focus will be placed on image representations \eqref{factor3} 
for a~dissipative linear relation $\Dc\subset\K^{2n}$ and a~symmetric linear relation $\Lc\subset\K^{2n}$, and we will investigate the properties of the pencil $sE-A$.

Before we start with such an investigation, we will briefly highlight the connection between the DAE $\tfrac{{\rm d}}{{\rm d}t}Ez(t)=Az(t)$ and
differential inclusion \eqref{MSdefintro} in the case where the range representation \eqref{factor3} holds. To this end, assume that $\Dc,\Lc\subset\K^{2n}$ are linear relations and $E,A\in\K^{n\times m}$, such that \eqref{factor3} holds.\\
Assuming that the $\K^m$-valued function $z(\cdot)$ solves the DAE $\tfrac{{\rm d}}{{\rm d}t}Ez(t)=Az(t)$ on an interval $I\subset\R$, we obtain that $x(\cdot)\coloneqq Ez(\cdot)$ fulfills
\[\forall \,t\in I:\quad\begin{smallpmatrix}
    x(t)\\\dot{x}(t)
  \end{smallpmatrix}=\begin{smallpmatrix}
    Ez(t)\\\tfrac{{\rm d}}{{\rm d}t}Ez(t)
  \end{smallpmatrix}=\begin{smallpmatrix}
    Ez(t)\\Az(t)
  \end{smallpmatrix}=\begin{smallbmatrix}
    E\\A
  \end{smallbmatrix}z(t)\in \ran\begin{smallbmatrix}E\\ A\end{smallbmatrix}=\Dc\Lc.\]
  By definition of the product of linear relations, this leads to the existence of some $e(\cdot):I\to\K^n$ such that \eqref{MSdefintro} holds for all $t\in I$.\\
On the other hand, if $x(\cdot),e(\cdot):I\to\K^n$ fulfill \eqref{MSdefintro}, then we obtain, again by the definition of the product of linear relations, that $(x(t),\dot{x}(t))\in\Dc\Lc$, and thus
\[\forall \,t\in I:\quad\begin{smallpmatrix}
    x(t)\\\dot{x}(t)
  \end{smallpmatrix}\in\Dc\Lc=\ran\begin{smallbmatrix}E\\ A\end{smallbmatrix}.\]
This leads to the existence of some $z(\cdot):I\to\K^m$ with
\[\begin{smallpmatrix}
    x(t)\\\dot{x}(t)
  \end{smallpmatrix}=\begin{smallbmatrix}E\\ A\end{smallbmatrix}z(t),\]
and thus
\[\forall\,t\in I:\quad\tfrac{{\rm d}}{{\rm d}t}Ez(t)=\dot{x}(t)=Az(t).\]
In \cite{MascvdSc18}, $\Dc,\Lc$ were assumed to be a~Dirac and a~Lagrangian subspace, respectively. In the language of linear relations, this means that $\Dc$ is skew-adjoint and $\Lc$ is self-adjoint. As mentioned before, we consider a~slightly larger class. Namely, instead of skew-adjoint and self-adjoint linear relations, we allow for dissipative $\Dc$, whereas $\Lc$ is allowed to be only symmetric. This is a~generalization in two respects: First of all, the relations $\Dc$ and $\Lc$ may have a~dimension less than $n$ and, second, we allow for relations $\Dc$ with $\re\langle x,y\rangle\leq0$ instead of $\re\langle x,y\rangle=0$ for all $(x,y)\in\Dc$.

Note that, in the special case where both $\Dc$ and $\Lc$ are graphs, i.e., $\Dc=\gr D$, $\Lc=\gr Q$ for some $D,Q\in\K^{n\times n}$, then the dissipativity of $\Dc$ leads to the dissipativity of $D$, and the symmetry of $\Lc$ means that $Q$ is Hermitian, and we end up with $z(t)=x(t)$ and an ordinary differential equation $\dot{x}(t)=DQx(t)$, which is port-Hamiltonian in the classical sense, see \cite{vdScJelt14}.


Our motivation for considering the above class involving dissipative and symmetric relation is that it also comprises the one treated in \cite{MehlMehrWojt18}. To this end, recall that a~DAE
$\tfrac{{\rm d}}{{\rm d}t}Ez(t)=Az(t)$ with $E,A\in\K^{n\times m}$ has in \cite{MehlMehrWojt18} been defined to be port-Hamiltonian, if there exist $D\in\K^{n\times n}$, $Q\in\K^{n\times m}$ with $A=DQ$, $D+D^*\leq 0$ and $Q^*E=E^*Q$.
It can be seen that, by the definition of the product of linear relations, for $\Dc=\gr D$ and $\Lc=\ran\begin{smallbmatrix}E\\Q\end{smallbmatrix}$, it holds
\begin{equation}\label{eq:DL-^1rel}
\begin{aligned}
\Dc\Lc=&\,\{ (x_1,x_2)\in\K^{2n} \,|\, \text{$\exists y\in\K^n$ s.t.\ } (x_1,y)\in\Lc\,\wedge\, (y,z_2)\in\Dc \}\\
=&\,\{(x_1,x_2)\in\K^{2n}\,|\,\text{$\exists z,y\in\K^n$ s.t.\ } (x_1,y)=(Ez,Qz)\in\Lc\,\wedge\, x_2=Dy \}\\
=&\,\{(x_1,x_2)\in\K^{2n}\,|\,\text{$\exists z\in\K^n$ s.t.\ }x_1=Ez\,\wedge\, x_2=DQz \}\\
=&\,\ran\begin{smallbmatrix}E\\DQ\end{smallbmatrix}.
\end{aligned}
\end{equation}
In particular, it holds \eqref{factor3} for $A=DQ$, whence the function $x(\cdot)\coloneqq Ez(\cdot)$ indeed fulfills $(x,\dot{x})\in\Dc\Lc$. The dissipativity of $D\in\K^{n\times n}$ leads, via Lemma~\ref{lem:nonneg}, to the maximal dissipativity of $\Dc$, whereas, by Lemma~\ref{lem:symequiv}, $\Lc$ is symmetric (but not necessarily self-adjoint).

Summarizing from the previous findings, the differences between the approaches to pH-DAEs in
\cite{MehlMehrWojt18} and \cite{MascvdSc18} are the following:\\

\begin{table}[H]
\begin{center}\fbox{
  \begin{minipage}[t]{10cm}
~\\[-4mm]
\begin{enumerate}[(i)]
    \item $\ran\begin{smallbmatrix}
Q\\E
\end{smallbmatrix}$ needs to be $n$-dimensional in \cite{MascvdSc18}, whereas, in \cite{MehlMehrWojt18}, it might have a~smaller dimension.\\[-2mm]
\phantom{x}
\item the relation $\Dc$ needs to be a~graph of a~matrix in \cite{MehlMehrWojt18}, whereas, in \cite{MascvdSc18}, $\Dc$ might have a~multi-valued part.\\[-2mm]
\phantom{x}
\item the relation $\Dc$ is skew-adjoint in \cite{MascvdSc18}, whereas, in \cite{MehlMehrWojt18}, $\Dc$~might be dissipative.\\[-2mm]
\phantom{x}
\end{enumerate}
\end{minipage}
}\caption{Differences between the approaches in \cite{MascvdSc18} and \cite{MehlMehrWojt18}}\label{fig:differences}
\end{center}\end{table}
\noindent
This justifies to prescribe the following terminology.
\begin{definition}[Port-Hamiltonian matrix pencil]\label{def:pHpenc}\
We call a~matrix pencil $s E-A\in\K[s]^{n\times m}$
\begin{enumerate}[(i)]
\item {\em port-Hamiltonian (pH) in the sense of \cite{MehlMehrWojt18}}, if there exist  $E,Q\in\K^{n\times m}$ and $D\in\K^{n\times n}$ such that $D$ is dissipative, $A=DQ$ and $E^*Q=Q^*E$,
\item {\em port-Hamiltonian in the sense of \cite{MascvdSc18}}, if \eqref{factor3} holds for some~skew-adjoint linear relation $\Dc\subset\K^{2n}$ and some~self-adjoint linear relation $\Lc\subset\K^{2n}$, and
\item {\em port-Hamiltonian in our sense}, if \eqref{factor3} holds for some~dissipative linear relation $\Dc\subset\K^{2n}$ and some~symmetric linear relation $\Lc\subset\K^{2n}$.
\end{enumerate}
\end{definition}
It can be directly seen that pencils which are pH in the sense of \cite{MascvdSc18} or pH in the sense of \cite{MehlMehrWojt18} are also pH in our sense. The reverse statements are not true as the following examples show. Thereafter, we present conditions on a~ pencil which is pH in the sense of \cite{MascvdSc18} to be also pH in the sense of \cite{MehlMehrWojt18}, and vice-versa.

We start with presenting a~system in which (i) in Fig.~\ref{fig:differences} is the reason why it is pH in the sense of \cite{MehlMehrWojt18}, but not in the sense of \cite{MascvdSc18}.
\begin{example}\label{ex:ex1}
Let $E=Q=\begin{smallbmatrix}1\\0\end{smallbmatrix}$, $A=\begin{smallbmatrix}
0\\1
\end{smallbmatrix}$ and  $D=\begin{smallbmatrix}
0&-1\\1&0
\end{smallbmatrix}$. Then $A=DQ$ and  $Q^*E=1=E^*Q$, i.e.\ $sE-A$ is pH in the sense of \cite{MehlMehrWojt18}.\\
Next we show that it is not pH in the sense of \cite{MascvdSc18}. Seeking for a contradiction, assume that $\Dc,\Lc\subseteq\C^4$ be skew-adjoint and self-adjoint subspaces such that
\begin{align}
\label{sosiehtsaus}
\ran\begin{bmatrix}
E\\A
\end{bmatrix}=\Span\left\{\begin{smallpmatrix}
1\\0\\0\\1
\end{smallpmatrix}\right\}=\Dc\Lc.
\end{align}
Then we see that $\mul\Dc\Lc=\ker\Dc\Lc=\{0\}$, which gives $\mul\Dc=\ker\Lc=\{0\}$. This together with Lemma~\ref{lem:dissprop} yields, by invoking $\ran\Lc=\dom \Lc^{-1}$, that $\dom\Dc=\ran\Lc=\K^2$, and we infer, from Proposition~\ref{prop:graph} that $\Dc=\gr \hat D$ and $\Lc=(\gr E)^{-1}$ for some skew-Hermitian $\hat D\in\K^{2\times 2}$ and some Hermitian $E\in\K^{2\times 2}$.
Hence we can rewrite \eqref{sosiehtsaus} as
\begin{align}
\label{sosiehtsbesseraus}
\Span\left\{\begin{smallpmatrix}
1\\0\\0\\1
\end{smallpmatrix}\right\}=\ran\begin{bmatrix}E\\ \hat D\end{bmatrix}.
\end{align}
Denoting the $i$th canonical unit vector by $e_i$, this gives
 \begin{align*}
\ran E&=\Span\left\{e_1\right\},& \ran(\hat D^*)&=\ran \hat D=\Span\left\{e_2\right\}.
\end{align*}
Since the space on the left hand side in \eqref{sosiehtsbesseraus} is one-dimensional, we obtain $\ker E\cap\ker \hat D\neq\{0\}$.
On the other hand \eqref{sosiehtsbesseraus}, $E=E^*$ and $\hat D=-\hat D^*$ leads to
\begin{align*}
\ker E&=(\ran E^*)^\perp=\Span\left\{e_2\right\},& \ker \hat D&=(\ran \hat D^*)^\perp=\Span\left\{e_1\right\}.
\end{align*}
This implies $\ker E\cap\ker \hat D=\{0\}$, which is a contradiction to the already proven fact that $\ker E\cap\ker \hat D$ is a~non-trivial space. Consequently, the pencil $sE-A$ cannot be pH in the sense of \cite{MascvdSc18}.
\end{example}

\noindent
Our second example is one which is pH-DAE in the sense of \cite{MascvdSc18} but not in the sense of \cite{MehlMehrWojt18}. The reason for the latter will be in Fig.~\ref{fig:differences}, i.e., it does not admit a~representation \eqref{factor3}
in which $\Dc$ is a~graph.


\begin{example}\label{ex:notmmw}
Consider
\begin{align*}\Dc=\,\ran\begin{smallbmatrix}1&0&0\\0&1&0\\0&0&0\\0&1&0\\-1&0&0\\0&0&1\end{smallbmatrix}\subseteq \K^6,\quad
\Lc=\,\ran\begin{smallbmatrix}1&0&0\\0&1&0\\0&0&1\\1&0&0\\0&1&0\\0&0&0\end{smallbmatrix}\subseteq \K^6.
\end{align*} Then, by using Lemma\,\ref{lem:symequiv} and Remark~\ref{rem:symequiv}, it can be seen that $\Dc$ skew-adjoint and $\Lc$ is self-adjoint. It can be seen that both $\mul\Dc$ and $\ker\Lc$ are spanned by the third canonical unit vector, and
\[\Dc\Lc=\begin{smallbmatrix}1&0&0&0\\0&1&0&0\\0&0&1&0\\0&1&0&0\\-1&0&0&0\\0&0&0&1\end{smallbmatrix}.
\]
Assume that $\Dc\Lc=(\gr \hat D)\hat\Lc$ with $\hat D\in\K^{3\times 3}$ and symmetric $\hat \Lc\subset\K^6$. The symmetry of $\hat \Lc$ yields
\[
4=\dim\Dc\Lc=\dim(\gr \hat D)\hat\Lc\leq\dim\hat \Lc \leq 3,
\]
which is a contradiction. Hence, rewriting $\Dc\Lc=(\gr \hat D)\hat\Lc$ is not possible, whence $sE-A$ is not pH in the sense of \cite{MehlMehrWojt18}.
\end{example}

Our last is example is one
which is pH in the sense of \cite{MehlMehrWojt18}, but not in the sense of \cite{MascvdSc18}. To disprove that this system is pH the sense of \cite{MascvdSc18}, we show that there is no representation
\eqref{factor3}
with skew-symmetric $\Dc$ and symmetric $\Lc$, cf.\ (iii) in Fig.~\ref{fig:differences}.
\begin{example}
\label{ex:notms}
Let $E=Q=-D=-A=1\in\R^{1\times 1}$. Then, clearly, $A=DQ$ and  $Q^*E=1=E^*Q$, i.e.,\ $sE-A$ is pH in the sense of \cite{MehlMehrWojt18}. Then
\begin{equation}
\ran\begin{smallbmatrix}
E\\A
\end{smallbmatrix}=\Span\left\{\begin{smallpmatrix}
1\\-1
\end{smallpmatrix}\right\}\label{eq:spanea11}.
\end{equation}
Now assume that \eqref{factor3} holds for some skew-symmetric linear relation $\Dc\subset\R^2$ and symmetric $\Lc\subset\R^2$. As $\Dc\subset\R^2$ is skew-symmetric, we immediately obtain that it is either trivial, or it is spanned by the first or second canonical unit vector in $\R^2$. In the first two cases $\Dc=\{0\}$ and $\Dc=\Span\left\{\begin{smallpmatrix}
1\\0
\end{smallpmatrix}\right\}$, we have $y=0$ for all $(x,y)\in \Dc\Lc$, which contradicts to \eqref{eq:spanea11}. On the other hand, if $\Dc=\Span\left\{\begin{smallpmatrix}
0\\1
\end{smallpmatrix}\right\}$, we have $\begin{smallpmatrix}
0\\1
\end{smallpmatrix}\in \Dc\Lc$, which is again a~contradiction to \eqref{eq:spanea11}.
\end{example}

After having highlighted the differences between the approaches of \cite{MascvdSc18} and \cite{MehlMehrWojt18}, we now analyze their mutualities. That is, we give conditions on a~matrix pencil which is pH in the sense of \cite{MascvdSc18} to be pH in the sense of \cite{MehlMehrWojt18}, and vice-versa.
\begin{proposition}
Assume that $sE-A\in\K[s]^{n\times m}$ is pH in the sense of \cite{MehlMehrWojt18}, i.e., $A=DQ$ for some dissipative $D\in\K^{n\times n}$ and $Q\in\K^{n\times m}$.\\
If, additionally $D+D^*=0$ and
$\dim\ran\begin{smallbmatrix}
E\\Q
\end{smallbmatrix}=n$,
then $sE-A$ is pH in the sense of \cite{MascvdSc18} with, in particular, \eqref{factor3} holds for
$\Lc\coloneqq\ran\begin{smallbmatrix}
E\\Q
\end{smallbmatrix}$ and $\Dc=\gr D$.
\end{proposition}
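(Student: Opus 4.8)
The plan is to check directly that the pair $\Dc=\gr D$ and $\Lc=\ran\begin{smallbmatrix}E\\Q\end{smallbmatrix}$ witnesses port-Hamiltonianity in the sense of \cite{MascvdSc18} as formulated in Definition~\ref{def:pHpenc}~(ii); that is, I would verify three things in turn: that $\Dc$ is skew-adjoint, that $\Lc$ is self-adjoint, and that $\eqref{factor3}$ holds with these two relations.

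\emph{Step 1 ($\Dc$ skew-adjoint).} Since $D$ is dissipative with $D+D^*=0$, the matrix $D$ is skew-Hermitian. The relation $\gr D=\ran\begin{smallbmatrix}I_n\\D\end{smallbmatrix}$ has dimension $n$, and its top block $I_n$ has rank $n$, so Proposition~\ref{prop:graph} applies and gives that $\Dc=\gr D$ is skew-adjoint.

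\emph{Step 2 ($\Lc$ self-adjoint).} Reading the image representation $\Lc=\ran\begin{smallbmatrix}E\\Q\end{smallbmatrix}$ with $F=E$, $G=Q$, the identity $Q^*E=E^*Q$ is exactly $G^*F=F^*G$, so the first equivalence of Lemma~\ref{lem:symequiv} shows $\Lc$ is symmetric. The extra hypothesis $\dim\Lc=\dim\ran\begin{smallbmatrix}E\\Q\end{smallbmatrix}=n$ together with the implication (b)$\Rightarrow$(a) of Lemma~\ref{lem:symequiv} then yields that $\Lc$ is self-adjoint.

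\emph{Step 3 ($\eqref{factor3}$).} The computation already carried out in $\eqref{eq:DL-^1rel}$ — which uses only that $\Dc$ is the graph of $D$, the chosen image representation of $\Lc$, and the definition of the product of linear relations — gives $\Dc\Lc=\ran\begin{smallbmatrix}E\\DQ\end{smallbmatrix}$; since $A=DQ$, this equals $\ran\begin{smallbmatrix}E\\A\end{smallbmatrix}$, i.e. $\eqref{factor3}$ holds. Combining the three steps with Definition~\ref{def:pHpenc}~(ii) completes the proof. I do not expect a genuine obstacle here: every step is an immediate appeal to an earlier result. The only point worth stating carefully is in Step~2, where Lemma~\ref{lem:symequiv} is applied to the \emph{given} image representation $\begin{smallbmatrix}E\\Q\end{smallbmatrix}$, which need not have full column rank — this is legitimate because that lemma only asks for the existence of \emph{some} symmetric image representation, and self-adjointness is then read off solely from the dimension condition $\dim\Lc=n$, which is precisely the hypothesis assumed.
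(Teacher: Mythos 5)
Your proposal is correct and follows essentially the same route as the paper's proof: verify that $\Dc=\gr D$ is skew-adjoint, that $\Lc=\ran\begin{smallbmatrix}E\\Q\end{smallbmatrix}$ is self-adjoint via Lemma~\ref{lem:symequiv} and the dimension hypothesis, and obtain \eqref{factor3} from the computation \eqref{eq:DL-^1rel}. The only cosmetic difference is that you cite Proposition~\ref{prop:graph} for the skew-adjointness of $\gr D$, while the paper argues via skew-symmetry of $\gr D$ together with $\dim\gr D=n$; the content is the same.
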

\begin{proof}
Assume that $E,A,Q\in\K^{n\times m}$ fulfill $A=DQ$, $D+D^*=0$, $E^*Q=Q^*E$ and $\dim\ran\begin{smallbmatrix}
E\\Q
\end{smallbmatrix}=n$. Then, by $\re \langle x,Dx\rangle=0$ for all $x\in\K^n$, we have that $\Dc\coloneqq \gr D$ is skew-symmetric. Since, further, $\dim\gr D=n$, Lemma~\ref{lem:nonneg} implies that $\Dc$ is even skew-adjoint. Moreover, by using Lemma~\ref{lem:symequiv}, $\dim\ran\begin{smallbmatrix}
E\\Q
\end{smallbmatrix}=n$ and $E^*Q=Q^*E$ imply that $\Lc\coloneqq \ran\begin{smallbmatrix}
E\\Q
\end{smallbmatrix}$ is self-adjoint. Then the result follows since, by \eqref{eq:DL-^1rel}, \eqref{factor3} holds for $A=DQ$.\hfill
\end{proof}

\begin{figure}
{\footnotesize
\begin{center}
  \resizebox{0.98\linewidth}{!}{\begin{tikzpicture}[thick,node distance = 22ex,implies/.style={double,double equal sign distance,-implies}, box/.style={fill=white,rectangle, draw=black},
  blackdot/.style={inner sep = 0, minimum size=3pt,shape=circle,fill,draw=black},plus/.style={fill=white,circle,inner sep = 0,very thick,draw},
  metabox/.style={inner sep = 3ex,rectangle,draw,dotted,fill=gray!20!white}]

    \node (vdS)     [metabox,minimum size=2.5cm]  {\parbox{2.8cm}{\rm\centering $sE-A$ is pH in the \\sense of \cite{MascvdSc18}}};
    \node (MM)     [right of =vdS,metabox,xshift=41ex,minimum size=2.5cm]  {\parbox{2.8cm}{\rm\centering $sE-A$ is pH in the \\sense of \cite{MehlMehrWojt18}}};
    \node (our)     [right of =vdS,metabox,minimum size=1.5cm,yshift=-3cm,xshift=1.15cm]  {\parbox{2.8cm}{\rm\centering $sE-A$ is pH \\in our sense}};

\node (vdS1) [right of =vdS, xshift=27.5ex,yshift=2ex]{};
\node (vdS2) [right of =vdS, xshift=27.5ex,yshift=7ex]{};
\node (vdS3) [right of =vdS, xshift=27.5ex,yshift=-3ex]{};
\node (vdS4) [right of =vdS, xshift=27.5ex,yshift=-8ex]{};
\node (MM1) [left of =MM, xshift=-27.5ex,yshift=2ex]{};
\node (MM2) [left of =MM, xshift=-27.5ex,yshift=7ex]{};
\node (MM3) [left of =MM, xshift=-27.5ex,yshift=-3ex]{};
\node (MM4) [left of =MM, xshift=-27.5ex,yshift=-8ex]{};

    \draw[->,semithick,double,double equal sign distance,>=stealth,negated] (vdS1)  -- node [left,yshift=2ex,xshift=10ex] {Example~\ref{ex:ex1}\&\ref{ex:notms}}  (MM1);
    \draw[<-,semithick,double,double equal sign distance,>=stealth,negated] (vdS2)  -- node [left,yshift=2ex,xshift=8ex] {Example~\ref{ex:notmmw}}  (MM2);
    \draw[<-,semithick,double,double equal sign distance,>=stealth] (vdS3)  -- node [left,yshift=2ex,xshift=9ex] {$\mul\Dc=\{0\}$}  (MM3);
    \draw[->,semithick,double,double equal sign distance,>=stealth] (vdS4)  -- node [left,yshift=2ex,xshift=13ex] {$\dim\Lc=n$, $D\!+\!D^*=0$}  (MM4);
    \draw[->,semithick,double,double equal sign distance,>=stealth] (our)  -- node [left,yshift=2ex,xshift=9ex] {}  (MM);
    \draw[->,semithick,double,double equal sign distance,>=stealth] (our)  -- node [left,yshift=2ex,xshift=9ex] {}  (vdS);

  \end{tikzpicture}}

\end{center}}

  \caption{Relations between the port-Hamiltonian concepts from Definition~\ref{def:pHpenc}, with matrices $E,A\in\K^{n\times m}$, $D\in\K^{n\times n}$ and subspaces $\Dc,\Lc\subset\K^{2n}$.}
  \end{figure}
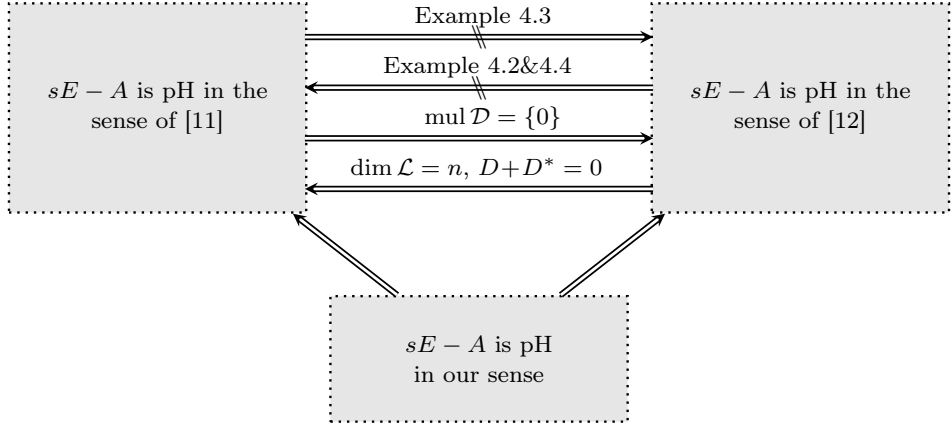

\begin{proposition}
Assume that $sE-A\in\K[s]^{n\times m}$ is pH in the sense of \cite{MascvdSc18}, i.e., \eqref{factor3} holds for some skew-adjoint $\Dc\subset\K^{2n}$ and some self-adjoint $\Lc\subset\K^{2n}$.\\
If, additionally $\mul\Dc=\{0\}$, then $sE-A$ is pH in the sense of \cite{MehlMehrWojt18}.\\
Namely, there exists some $Q\in\K^{n\times m}$ and some skew-Hermitian $D\in\K^{n\times n}$, such that $A=DQ$ and $E^*Q=Q^*E$. These
matrices fulfill $\Dc=\gr D$ and $\Lc\supseteq\ran\begin{smallbmatrix}
E\\Q
\end{smallbmatrix}$.
\end{proposition}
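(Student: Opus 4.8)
The plan is to first reduce the skew-adjoint relation $\Dc$ to the graph of a skew-Hermitian matrix, and then to pull a range representation of $\Lc$ back through that matrix.

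First I would show that $\Dc=\gr D$ for some skew-Hermitian $D\in\K^{n\times n}$. Since $\Dc$ is skew-adjoint, Remark~\ref{rem:symequiv} gives $\dim\Dc=n$, and $\Dc$ is in particular dissipative (as $\re\langle x,y\rangle=0$ for all $(x,y)\in\Dc$), so by Lemma~\ref{lem:nonneg} it is maximally dissipative. Lemma~\ref{lem:dissprop} then yields $\dom\Dc=(\mul\Dc)^\perp=\{0\}^\perp=\K^n$. Writing $\Dc=\ran\begin{smallbmatrix}F_\Dc\\ G_\Dc\end{smallbmatrix}$ with $F_\Dc,G_\Dc\in\K^{n\times n}$, we have $\ran F_\Dc=\dom\Dc=\K^n$, so $\rk F_\Dc=n$ and Proposition~\ref{prop:graph} applies, giving $\Dc=\gr D$ with $D$ skew-Hermitian.

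Next, since $\Lc$ is self-adjoint, Lemma~\ref{lem:symequiv} provides $F,G\in\K^{n\times n}$ with $\Lc=\ran\begin{smallbmatrix}F\\ G\end{smallbmatrix}$ and $F^*G=G^*F$. As in the computation \eqref{eq:DL-^1rel}, the product then equals $\Dc\Lc=(\gr D)\Lc=\ran\begin{smallbmatrix}F\\ DG\end{smallbmatrix}$. The hypothesis \eqref{factor3} says $\ran\begin{smallbmatrix}E\\ A\end{smallbmatrix}=\Dc\Lc=\ran\begin{smallbmatrix}F\\ DG\end{smallbmatrix}$, so every column of $\begin{smallbmatrix}E\\ A\end{smallbmatrix}$ lies in the range of $\begin{smallbmatrix}F\\ DG\end{smallbmatrix}$; hence there is $R\in\K^{n\times m}$ with $\begin{smallbmatrix}E\\ A\end{smallbmatrix}=\begin{smallbmatrix}F\\ DG\end{smallbmatrix}R$, that is, $E=FR$ and $A=DGR$.

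Finally I would set $Q:=GR\in\K^{n\times m}$. Then $A=DQ$ with $D$ skew-Hermitian (hence dissipative), and $E^*Q=R^*F^*GR=R^*G^*FR=Q^*E$ by $F^*G=G^*F$, so $sE-A$ is pH in the sense of \cite{MehlMehrWojt18}. Moreover $\begin{smallbmatrix}E\\ Q\end{smallbmatrix}=\begin{smallbmatrix}F\\ G\end{smallbmatrix}R$ gives $\ran\begin{smallbmatrix}E\\ Q\end{smallbmatrix}\subseteq\ran\begin{smallbmatrix}F\\ G\end{smallbmatrix}=\Lc$, and $\Dc=\gr D$ by the first step. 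The only genuinely delicate point is this first step — deducing that $\Dc$ is a matrix graph from $\mul\Dc=\{0\}$ — which hinges on combining the maximal-dissipativity characterization in Lemma~\ref{lem:dissprop} with Proposition~\ref{prop:graph}; everything afterward is elementary linear algebra.
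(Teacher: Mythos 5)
Your proposal is correct and follows essentially the same route as the paper: reduce $\Dc$ to $\gr D$ via Proposition~\ref{prop:graph}, take a square range representation of $\Lc$ from Lemma~\ref{lem:symequiv}, compute $\Dc\Lc$ as in \eqref{eq:DL-^1rel}, and factor $\begin{smallbmatrix}E\\A\end{smallbmatrix}$ through it to define $Q$. The only (harmless) deviation is in the graph step: the paper gets $\rk F_\Dc=n$ directly from $\mul\Dc=\{0\}$ (which forces $\ker F_\Dc=\{0\}$), while you detour through maximal dissipativity and Lemma~\ref{lem:dissprop} to obtain $\dom\Dc=\K^n$; both are valid.
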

\begin{proof}
Assume that $sE-A\in\K[s]^{n\times m}$ fulfills \eqref{factor3} for some skew-adjoint $\Dc\subset\K^{2n}$ with $\mul\Dc=\{0\}$, and some $\Lc\subset\K^{2n}$. Then, by Remark~\ref{rem:symequiv}, $\dim\Dc=n$, whence there exist $F,G\in\K^{n\times n}$, such that $\Dc=\ran\begin{smallbmatrix}F\\G\end{smallbmatrix}$. The property $\mul\Dc=\{0\}$ further leads to $\ker F=\{0\}$, whence, by Proposition~\ref{prop:graph}, $\Dc=\gr D$ for some skew-Hermitian $D\in\K^{n\times n}$. Further, the self-adjointness of $\Lc$ leads, by using Lemma~\ref{lem:symequiv}, to the existence of some $E_1,Q_1\in\K^{n\times n}$ with $E_1^*Q_1=Q_1^*E_1$ and $\Lc=\ran\begin{smallbmatrix}E_1\\Q_1\end{smallbmatrix}$. The latter matrix has moreover full column rank since self-adjointness of $\Lc$ implies, by Lemma~\ref{lem:symequiv}, that $\dim\Lc=n$. Now, by making use of \eqref{eq:DL-^1rel}, we obtain
\[\ran\begin{smallbmatrix}E\\A\end{smallbmatrix}=\Dc\Lc=\ran\begin{smallbmatrix}E_1\\DQ_1\end{smallbmatrix}.\]
Consequently, there exists some $T\in\K^{n\times m}$ with
\[\begin{smallbmatrix}E\\A\end{smallbmatrix}=\begin{smallbmatrix}E_1\\DQ_1\end{smallbmatrix}T=\begin{smallbmatrix}E_1T\\DQ_1T\end{smallbmatrix},\]
which implies that $A=DQ$ for $Q=Q_1T$, and
\[\Lc=\ran \begin{smallbmatrix}E_1\\Q_1\end{smallbmatrix}\supseteq \ran \begin{smallbmatrix}E_1\\Q_1\end{smallbmatrix}T=\ran \begin{smallbmatrix}E\\Q\end{smallbmatrix}.\]
Invoking $E=E_1T$, we obtain that \[E^*Q=T^*E_1^*Q_1T=T^*Q_1^*E_1T=Q^*E\]
and the desired statement follows.\hfill
\end{proof}

\section{Regularity of port-Hamiltonian pencils}
\label{sec:reg}

In this section, we study regularity of square pencils $sE-A\in\K[s]^{n\times n}$ which are port-Hamiltonian in our sense, i.e.,
$E,A\in\K^{n\times n}$ fulfill \eqref{factor3} for a~dissipative relation $\Dc\subset\K^{2n}$ and a~symmetric relation  $\Lc\subset\K^{2n}$.
We start with a~characterization of regularity under the additional assumption that the multi-valued part of $\Dc$ and the kernel of $\Lc$ intersect trivially. 
\begin{proposition}
\label{cor:reg}
Let $sE-A\in\K^{n\times n}$ be pH in our sense, that is, \eqref{factor3} holds for some dissipative relation $\Dc\subset\K^{2n}$ and some symmetric relation $\Lc\subset\K^{2n}$. If $\mul\Dc\cap\ker\Lc=\{0\}$, then there exists a~unitary matrix $U\in\K^{n\times n}$ and an invertible matrix $T\in\K^{n\times n}$, such that, for some $n_1,n_2\in\N$,
\begin{align}
\label{meinblock}
U^*(sE-A)T=\begin{bmatrix}sL_{11}-D_{11}&0\\sL_{21}-D_{21}&sL_{22}-D_{22}\end{bmatrix}
\end{align}
with $L_{ij},D_{ij}\in\K^{n_i\times n_j}$, $i,j=1,2$, satisfying $L_{11}=L_{11}^*$, $D_{11}+D_{11}^*\leq 0$, $L_{22}=L_{22}^2=L_{22}^*$ and $-D_{22}=D_{22}^2=-D_{22}^*$.\\
Moreover, $sE-A$ is regular if, and only if, the following two conditions hold.
\begin{itemize}
\item[\rm (i)] $sL_{11}-D_{11}$ is regular, and
\item[\rm (ii)]  $\ker\Lc\widehat+\mul\Dc=(\ran\Lc)^{\perp}\widehat +(\dom \Dc)^{\perp}$. 
\end{itemize}
\end{proposition}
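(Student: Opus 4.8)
\emph{Plan.} The statement is essentially a corollary of Proposition~\ref{lem:product}: the genuinely hard structural work (the simultaneous block decomposition of $\Dc$ and $\Lc$) has been carried out there, and what remains is to transfer it to the pencil $sE-A$ and to read off when the resulting block pencil is regular. Since $\mul\Dc\cap\ker\Lc=\{0\}$, Proposition~\ref{lem:product} applies and provides a unitary $U\in\K^{n\times n}$, integers $n_1=\dim(\ran\Lc\cap\dom\Dc)$ and $n_2=n-n_1$, and blocks $L_{ij},D_{ij}\in\K^{n_i\times n_j}$ fulfilling \eqref{L11D11_prop}--\eqref{l22d22_prop} with \eqref{dl_final}. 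Abbreviating $\widetilde L=\begin{smallbmatrix}L_{11}&0\\L_{21}&L_{22}\end{smallbmatrix}$ and $\widetilde D=\begin{smallbmatrix}D_{11}&0\\D_{21}&D_{22}\end{smallbmatrix}$, identity \eqref{dl_final} reads $\Dc\Lc=\ran\diag(U,U)\begin{smallbmatrix}\widetilde L\\ \widetilde D\end{smallbmatrix}$, while \eqref{factor3} gives $\ran\begin{smallbmatrix}E\\ A\end{smallbmatrix}=\Dc\Lc$. Hence the $2n\times n$ matrices $\begin{smallbmatrix}E\\ A\end{smallbmatrix}$ and $\diag(U,U)\begin{smallbmatrix}\widetilde L\\ \widetilde D\end{smallbmatrix}$ have the same column space, and therefore differ by right multiplication with an invertible $T\in\K^{n\times n}$; equivalently (multiplying by $\diag(U^*,U^*)$) $U^*ET=\widetilde L$ and $U^*AT=\widetilde D$, so that $U^*(sE-A)T=s\widetilde L-\widetilde D$ is precisely \eqref{meinblock}, the block properties being those in \eqref{L11D11_prop}--\eqref{l22d22_prop}. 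To produce the invertible $T$ I would invoke the elementary fact that two matrices with the same column space are related by right multiplication with an invertible matrix, proved by matching up decompositions of $\K^n$ into kernel plus a fixed complement.

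Next, since $U$ is unitary and $T$ invertible, $sE-A$ is regular if and only if $s\widetilde L-\widetilde D$ is regular, and as $s\widetilde L-\widetilde D$ is block lower triangular with square diagonal blocks,
\[
\det(s\widetilde L-\widetilde D)=\det(sL_{11}-D_{11})\cdot\det(sL_{22}-D_{22}).
\]
Thus $sE-A$ is regular if and only if both $sL_{11}-D_{11}$ and $sL_{22}-D_{22}$ are regular. The regularity of $sL_{11}-D_{11}$ is exactly condition~(i). For the second factor, Proposition~\ref{lem:product}\,(iii) — combined with the standing assumption $\mul\Dc\cap\ker\Lc=\{0\}$, under which $\ker\Lc\widehat{+}\mul\Dc$ and $\mul\Dc\widehat{\dotplus}\ker\Lc$ denote the same subspace — shows that condition~(ii) is equivalent to $\ker L_{22}\cap\ker D_{22}=\{0\}$. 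So it suffices to prove that $sL_{22}-D_{22}$ is regular if and only if $\ker L_{22}\cap\ker D_{22}=\{0\}$.

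This last equivalence uses only the range condition in \eqref{l22d22_prop}, namely $\ran L_{22}\cap\ran D_{22}=\{0\}$ (one could also use that $L_{22}$ and $-D_{22}$ are orthogonal projections). If $0\neq v\in\ker L_{22}\cap\ker D_{22}$, then $(sL_{22}-D_{22})v=0$ for every $s$, so $\det(sL_{22}-D_{22})\equiv0$ and the pencil is singular. Conversely, if $\ker L_{22}\cap\ker D_{22}=\{0\}$, then $L_{22}-D_{22}$ is invertible: from $(L_{22}-D_{22})v=0$ one gets $L_{22}v=D_{22}v\in\ran L_{22}\cap\ran D_{22}=\{0\}$, hence $L_{22}v=D_{22}v=0$ and $v=0$; therefore $\det(sL_{22}-D_{22})$ does not vanish at $s=1$, is not the zero polynomial, and $sL_{22}-D_{22}$ is regular. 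The degenerate cases $n_1=0$ or $n_2=0$ are handled by the convention that an empty pencil is regular. The only points needing care are the extraction of the invertible $T$ in the first step and, above all, the identification of condition~(ii) with $\ker L_{22}\cap\ker D_{22}=\{0\}$ through part~(iii) of Proposition~\ref{lem:product}; everything else is the block-triangular determinant formula together with the kernel/range structure of $L_{22},D_{22}$ supplied by that proposition.
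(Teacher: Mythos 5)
Your proposal is correct and follows essentially the same route as the paper: apply Proposition~\ref{lem:product} to get the block representation, extract the invertible $T$ from equality of column spaces, factor the determinant of the block lower triangular pencil, and translate condition (ii) into $\ker L_{22}\cap\ker D_{22}=\{0\}$ via Proposition~\ref{lem:product}\,(iii). The only (harmless) deviation is the final step: where the paper deduces regularity of $sL_{22}-D_{22}$ from its positive realness together with Lemma~\ref{lem:posreal}\,(a), you argue directly that $\ran L_{22}\cap\ran D_{22}=\{0\}$ and trivial common kernel force $L_{22}-D_{22}$ to be invertible, which is an equally valid and slightly more elementary justification.
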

\begin{proof}
By Proposition~\ref{lem:product}, there exists a~unitary matrix $U\in\K^{n\times n}$, such that
\[
\ran\begin{bmatrix}E\\A\end{bmatrix}=\Dc\Lc=\ran\diag(U,U)\begin{smallbmatrix}
L_{11}&0\\L_{21}&L_{22}\\D_{11}&0\\D_{21}&D_{22}
\end{smallbmatrix}
\]
with $L_{ij},D_{ij}\in\K^{n_i\times n_j}$ having the desired properties. 
Hence there exists some invertible $T\in\K^{n\times n}$, such that
\[
\begin{bmatrix}E\\A\end{bmatrix}T=\diag(U,U)\begin{smallbmatrix}
L_{11}&0\\L_{21}&L_{22}\\D_{11}&0\\D_{21}&D_{22}
\end{smallbmatrix},
\]
which shows \eqref{meinblock}. For the proof of the remaining statement, we make use of the identity
\begin{equation}
\det(sE-A)=\det(T)^{-1}\det(U)\det(s L_{11}- D_{11})\det(sL_{22}- D_{22}).\label{eq:triangpenc}
\end{equation}
We first show that the regularity of $sE-A$ implies (i) and (ii): Assuming that $sE-A$ is regular, we obtain from \eqref{eq:triangpenc} that
both pencils $s L_{11}- D_{11}$ and $sL_{22}- D_{22}$ are regular. In particular, (i) holds, and $\ker L_{22}\cap\ker D_{22}=\{0\}$.
By Proposition~\ref{lem:product}~(iii), the latter implies the identity in (ii).\\
To prove the reverse implication, assume that the pencil $s L_{11}- D_{11}$ is regular and (ii) holds. Invoking,  Proposition~\ref{lem:product}~(iii), the condition (ii) implies $\ker L_{22}\cap\ker D_{22}=\{0\}$. Using  $L_{22}=L_{22}^2=L_{22}^*$ and $-D_{22}=D_{22}^2=-D_{22}^*$, the pencil $sL_{22}-D_{22}$ is positive real with $\ker L_{22}\cap\ker D_{22}=\{0\}$. Therefore, by Lemma~\ref{lem:posreal}, the pencil $sL_{22}-D_{22}$ is regular.
Then \eqref{eq:triangpenc} yields that $sE-A$ is regular.\hfill
\end{proof}


We apply Proposition~\ref{cor:reg} to the special case that $\Dc=\gr D$ from some dissipative $D\in\K^{n\times n}$.
\begin{corollary}\label{prop:EAreg}
Let $E,D,Q\in\K^{n\times n}$ with $Q^*E=E^*Q$ and $D+D^*\leq 0$. Consider the following three statements.
\begin{enumerate}[\rm (i)]
    \item $sE-DQ$ is a~regular pencil;
    \item $sE-Q$  is a~regular pencil;
    \item For $\Lc=\ran\begin{smallbmatrix}E\\Q\end{smallbmatrix}$, it holds $\dim\Lc=n$, i.e.,\ $\Lc$ is a~self-adjoint linear relation.
\end{enumerate}
Then
\[{\rm (i)}\,\Longrightarrow\,{\rm (ii)}\,\Longleftrightarrow\,{\rm (iii)}.\]
If additionally, $Q^*E\geq 0$ and
\begin{align}
\label{secondcond}
(Q\ker E)\cap \{x\in\ran Q~|\;Dx\in (\ran Q)^\bot\}=\{0\},
\end{align}
then $(ii)\Longrightarrow (i)$.
\end{corollary}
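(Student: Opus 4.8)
The plan is to deduce the three implications largely from Proposition~\ref{cor:reg} and Lemma~\ref{lem:symequiv}, after setting $\Dc=\gr D$ and $\Lc=\ran\begin{smallbmatrix}E\\Q\end{smallbmatrix}$. First note that $\Dc=\gr D$ has $\mul\Dc=\{0\}$, so the hypothesis $\mul\Dc\cap\ker\Lc=\{0\}$ of Proposition~\ref{cor:reg} is automatically satisfied, and by \eqref{eq:DL-^1rel} we have $\ran\begin{smallbmatrix}E\\DQ\end{smallbmatrix}=\Dc\Lc$, so $sE-DQ$ is pH in our sense. For the implication (ii)$\Longleftrightarrow$(iii): since $Q^*E=E^*Q$, Lemma~\ref{lem:symequiv} tells us $\Lc$ is symmetric, and $\Lc$ is self-adjoint precisely when $\dim\Lc=n$, i.e.\ $\rk\begin{smallbmatrix}E\\Q\end{smallbmatrix}=n$. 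On the other hand, $sE-Q$ is a positive-real-type pencil only after a sign twist; more directly, I would argue that $sE-Q$ is regular iff $\ker E\cap\ker Q=\{0\}$ fails to be the obstruction, but the clean route is: $sE-Q$ regular $\iff$ $\begin{smallbmatrix}E\\Q\end{smallbmatrix}$ has full column rank $n$. One inclusion is immediate (if the matrix drops rank, pick $0\neq z$ in its kernel, then $z$ lies in the kernel of $sE-Q$ for all $s$, so the pencil is singular); for the converse, use that $Q^*E=E^*Q$ makes $sE-Q$ (up to multiplying $Q$ by $-1$, which does not affect regularity) symmetric, and Lemma~\ref{lem:posreal}(a) — or a direct Kronecker argument — gives regularity from $\ker E\cap\ker Q=\{0\}$, which in turn follows from full column rank of $\begin{smallbmatrix}E\\Q\end{smallbmatrix}$. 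Thus (ii)$\Longleftrightarrow$(iii).

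For (i)$\Longrightarrow$(ii): apply Proposition~\ref{cor:reg} to $sE-DQ$. If $sE-DQ$ is regular, then condition (ii) of that proposition holds, i.e.\ $\ker\Lc\widehat+\mul\Dc=(\ran\Lc)^\perp\widehat+(\dom\Dc)^\perp$. Since $\mul\Dc=\{0\}$ and $\dom\Dc=\K^n$ (as $\Dc=\gr D$), this reads $\ker\Lc=(\ran\Lc)^\perp$. By Lemma~\ref{lem:dissprop} (the symmetric case), $\ker\Lc=(\ran\Lc)^\perp$ is equivalent to $\Lc$ being self-adjoint, hence $\dim\Lc=n$, which is (iii), equivalently (ii). Alternatively one can read this off the block form \eqref{meinblock}: regularity of $sE-DQ$ forces regularity of $sL_{22}-D_{22}$, hence $\ker L_{22}\cap\ker D_{22}=\{0\}$, and then Proposition~\ref{lem:product}(iii) delivers the same identity.

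The harder direction is (ii)$\Longrightarrow$(i) under the extra hypotheses $Q^*E\geq0$ and \eqref{secondcond}. Here I would invoke Proposition~\ref{lem:product}(iv): since $\Dc=\gr D$ with $D$ dissipative and $\Lc$ is now self-adjoint (using (iii)), that part gives $L_{21}=D_{22}=0$, $L_{22}=I_{n_2}$, and the explicit descriptions
\[
\ker L_{11}\times\{0\}=U^*\mul\Lc,\qquad
\ker D_{11}\times\{0\}=U^*\{x\in\ran\Lc\mid Dx\in\ker\Lc\}.
\]
By Proposition~\ref{cor:reg}, regularity of $sE-DQ$ is then equivalent to regularity of $sL_{11}-D_{11}$ alone (condition (ii) of that proposition holds since $L_{22}=I$, $D_{22}=0$ give $\ker L_{22}\cap\ker D_{22}=\{0\}$). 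Now $L_{11}=L_{11}^*$ and, by Proposition~\ref{lem:product}(i), $\Lc$ nonnegative (from $Q^*E\geq0$) yields $L_{11}\geq0$; together with $D_{11}+D_{11}^*\leq0$ this makes $sL_{11}-D_{11}$ positive real, so by Lemma~\ref{lem:posreal}(a) it is regular iff $\ker L_{11}\cap\ker D_{11}=\{0\}$. Translating back through the displayed identities, $\ker L_{11}\cap\ker D_{11}$ corresponds to $\mul\Lc\cap\{x\in\ran\Lc\mid Dx\in\ker\Lc\}$; using the image representation $\Lc=\ran\begin{smallbmatrix}E\\Q\end{smallbmatrix}$ one has $\mul\Lc=Q\ker E$, $\ran\Lc=\ran Q$, and $\ker\Lc=(\ran\Lc)^\perp=(\ran Q)^\perp$ (self-adjointness), so this intersection is exactly $(Q\ker E)\cap\{x\in\ran Q\mid Dx\in(\ran Q)^\perp\}$, which is $\{0\}$ by \eqref{secondcond}. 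Hence $sL_{11}-D_{11}$ is regular and therefore so is $sE-DQ$. The main obstacle I anticipate is bookkeeping: carefully matching the abstract subspaces $\mul\Lc$, $\ker\Lc$, $\ran\Lc$ with the concrete matrix data $Q\ker E$, $(\ran Q)^\perp$, $\ran Q$ under the unitary change of coordinates $U$ from Proposition~\ref{lem:product}, and making sure the "$Dx\in\ker\Lc$" condition survives that translation intact.
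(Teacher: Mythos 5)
Most of your plan coincides with the paper's own proof: the identification $\Dc=\gr D$, $\Lc=\ran\begin{smallbmatrix}E\\Q\end{smallbmatrix}$ with $\mul\Dc=\{0\}$, the derivation of (i)$\Rightarrow$(iii) from Proposition~\ref{cor:reg}(ii) together with $\dom\Dc=\K^n$ and Lemma~\ref{lem:dissprop}, the easy direction (ii)$\Rightarrow$(iii) via a common kernel vector, and the final step (ii)$\Rightarrow$(i) under $Q^*E\geq0$ and \eqref{secondcond} via Proposition~\ref{lem:product}(i),(iv), positive realness of $sL_{11}-D_{11}$, Lemma~\ref{lem:posreal}(a), and the translations $\mul\Lc=Q\ker E$, $\ran\Lc=\ran Q$, $\ker\Lc=(\ran Q)^\perp$ are exactly the paper's argument.

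The genuine gap is your proof of (iii)$\Rightarrow$(ii). You argue that $Q^*E=E^*Q$ makes $sE-Q$ ``symmetric up to a sign twist'' and then invoke Lemma~\ref{lem:posreal}(a); but that lemma is only about positive real pencils, i.e.\ it requires $E=E^*\geq0$ and a dissipative constant coefficient, and neither follows from $Q^*E=E^*Q$ (nor even from $Q^*E\geq0$) --- $E$ and $Q$ need not be Hermitian at all. Nor can an unspecified ``direct Kronecker argument'' run on $\ker E\cap\ker Q=\{0\}$ alone: for a general pencil a trivial common kernel does not give regularity (e.g.\ $E=\begin{smallbmatrix}1&0\\1&0\end{smallbmatrix}$, $Q=\begin{smallbmatrix}0&1\\0&1\end{smallbmatrix}$), so the symmetry must enter in an essential way, which your sketch does not provide. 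The paper closes this step with a separate structural argument: it applies Proposition~\ref{lem:product}(iv) with the auxiliary dissipative relation $\Dc=\gr(-I_n)$ to the self-adjoint $\Lc$, obtaining $\begin{smallbmatrix}E\\Q\end{smallbmatrix}T=\diag(U,U)\begin{smallbmatrix}L_{11}&0\\0&I_{n-n_1}\\D_{11}&0\\D_{21}&0\end{smallbmatrix}$ in which $D_{11}$ is invertible, because $\{x\in\ran\Lc\mid -x\in\ker\Lc\}=\ran\Lc\cap(\ran\Lc)^\perp=\{0\}$; then $\det(sE-Q)=c\,\det(sL_{11}-D_{11})\,s^{n-n_1}\not\equiv0$ since $\det(sL_{11}-D_{11})$ is nonzero at $s=0$. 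Note that in this step $L_{11}$ is merely Hermitian, so positive realness is unavailable even there and the invertibility of $D_{11}$ is the decisive point. Until you supply an argument of this kind, (iii)$\Rightarrow$(ii) --- and with it your claimed chain (i)$\Rightarrow$(ii), which you route through that implication --- remains unproven, while your (i)$\Rightarrow$(iii), (ii)$\Rightarrow$(iii) and (ii)$\Rightarrow$(i) steps are sound.
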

\begin{proof}
By using \eqref{eq:DL-^1rel}, we have that \eqref{factor3} holds for $A=DQ$, $\Dc=\gr D$ and  $\Lc=\ran\begin{smallbmatrix}E\\Q\end{smallbmatrix}$. Then $\Lc$ is symmetric by Lemma~\ref{lem:symequiv}.\\
``(i)\,$\Rightarrow$\,(iii)'': Assume that $sE-DQ$ is regular.  The multi-valued part of $\Dc=\gr D$ is trivial, whence  $\mul\Dc\cap\ker\Lc=\{0\}$. Thus we can apply Proposition~\ref{cor:reg}~(ii), which gives
\[
\ker\Lc=\ker\Lc\widehat+\mul\Dc=(\ker\Lc)^\perp\widehat +(\dom\Dc)^\perp=(\ker\Lc)^\perp.
\]
Then Lemma~\ref{lem:symequiv} yields that $\Lc$ is self-adjoint.\\
``(iii)\,$\Rightarrow$\,(ii)'': Let $\Lc$ be self-adjoint. Then Proposition~\ref{lem:product}~(iv) with $\Dc=-\gr I_n$ implies that there exist unitary matrix $U$ and a~Hermitian matrix $L_{11}$ with 
\begin{equation}
\label{eq:rangetriangedit}\ran\begin{smallbmatrix}E\\Q
\end{smallbmatrix}=\Lc=\ran\diag(U,U)\begin{smallbmatrix}L_{11}&0\\0&I_{n-n_1}\\ D_{11}&0\\D_{21}&0\end{smallbmatrix}
\end{equation}
for some Hermitian $D_{11},L_{11}\in\K^{n_1\times n_1}$ and $D_{21}\in\K^{n_2\times n_1}$ with $D_{11}+D_{11}^*\leq0$. Moreover, by Proposition~\ref{lem:product}~(iv), we further have
\[\ker D_{11}\times \{0\}=\left\{x\in\ran \Lc~|\;Dx\in \ker\Lc\right\}.\]
Since, by Lemma~\ref{lem:dissprop}, $\ran \Lc=(\ker\Lc)^\bot$, we obtain that the latter space is trivial. Therefore,  $D_{11}$ is invertible.
Further, by using \eqref{eq:rangetriangedit}, we obtain that there exists some invertible $T\in\K^{n\times n}$ with
\[
\begin{smallbmatrix}E\\Q\end{smallbmatrix}T=\diag(U,U)\begin{smallbmatrix}L_{11}&0\\0&I_{n-n_1}\\ D_{11}&0\\D_{21}&0\end{smallbmatrix}.
\]
This gives $\det(sE-Q)=\det(UT^{-1})\det(sL_{11}-D_{11})\cdot s^{n-n_1}$. The polynomial $\det(sL_{11}-D_{11})$ is nonzero, since the invertibility of $D_{11}$ yields that it does not vanish at the origin. Therefore, $\det(sE-Q)$ is a~product of nonzero polynomials, whence the pencil $sE-Q$ is regular.\\
``(ii)\,$\Rightarrow$\,(iii)'': If $sE-Q$ is regular, then $\ker E\cap\ker Q=\{0\}$, and the dimension formula gives
\[\dim\Lc=\dim\begin{smallbmatrix}E\\Q\end{smallbmatrix}=n.\]
It remains to prove that ``(ii)\,$\Rightarrow$\,(i)'' holds under the additional assumptions $Q^*E\geq 0$ and \eqref{secondcond}. As we have already shown that (ii) implies (iii), we can further use that $\Lc$ is self-adjoint.
 By using $\Dc=\gr D$, we can apply Proposition~\ref{lem:product}~(iv) to see that there exists a~unitary matrix $U\in\K^{n\times n}$, such that  
 \[ \ran\begin{smallbmatrix}
       E\\A
     \end{smallbmatrix}=
     \Dc\Lc=\ran\diag(U,U)\begin{smallbmatrix}
L_{11}&0\\0& I_{n_2}\\D_{11}& 0\\ D_{21}& 0
\end{smallbmatrix}
\]     
with $n_1=\dim\ran\Lc=\rk Q$, $n_2=n-n_1$, and matrices $L_{ij},D_{ij}\in\K^{n_i\times n_j}$ with $L_{11}=L_{11}^*$ and $D_{11}+D_{11}^*\leq0$.
Invoking \eqref{secondcond}, Proposition~\ref{lem:product}~(iv) further yields that
\begin{multline*}
\{0\}=U^*(Q\ker E)\cap \{x\in\ran Q~ |\;Dx\in (\ran Q)^\bot\}\\=(\ker L_{11}\times\{0\})\cap(\ker D_{11}\times\{0\})=(\ker L_{11}\cap\ker D_{11})\times\{0\},
\end{multline*}
and thus $\ker L_{11}\cap\ker D_{11}=\{0\}$.
On the other hand, the assumption $Q^*E\geq 0$ implies, by using Lemma~\ref{lem:nonneg}, that $\Lc$ is nonnegative. Then Proposition~\ref{lem:product}~(i) implies that $L_{11}\geq0$.
Thus, $sL_{11}-D_{11}$ is positive real, and Lemma~\ref{lem:posreal} together with the already proven identity $\ker L_{11}\cap\ker D_{11}=\{0\}$ yields that $sL_{11}-D_{11}$ is regular. Further, by Lemma~\ref{lem:dissprop} together with the self-adjointness of $\Lc$, we have $\ker \Lc=(\ran\Lc)^\bot$. Additionally invoking $\dom\Dc=\K^n$ and $\mul\Dc=\{0\}$, we see that  $\ker\Lc\widehat+\mul\Dc=(\ran\Lc)^{\perp}\widehat +(\dom \Dc)^{\perp}$. This means that (i) and (ii) in Proposition~\ref{cor:reg} hold, implying that $sE-A$ is regular.\hfill\end{proof}

Note that the statement ``(i)\,$\Rightarrow$\,(ii)'' has already been obtained in \cite[Prop.~4.1]{MehlMehrWojt18}. The implication ``(ii)$\Rightarrow$(i)'' does not hold in general, see \cite[Ex.~4.7]{MehlMehrWojt18}. We present another example which shows that we can construct pencils $sE-DQ$ with arbitrarily large row and column minimal indices.

\begin{example}\label{ex:sing}
Let $n\coloneqq2k+1$, $k\in\N$, and let $Q$ be the identity matrix of size $n\times n$. Further, let $E,D\in\K^{n\times n}$ with
\[
sE-DQ=sE-D=\!\!\begin{bmatrix}
0& -G_k(s)^\top\\ G_k(-s)&0
\end{bmatrix} \text{ and}~  G_k(s)\coloneqq\begin{smallbmatrix}
 1&s&&\\ &\ddots &\ddots &\\&& 1&s\end{smallbmatrix}\in\K[s]^{k\times(k+1)}.
\]
Then we immediately see that $Q^*E=E^*Q$, $D+D^*=0$ and $sE-DQ=sE-D$ is singular. In particular, the pencil has one row and one column minimal index, and both are equal to $k$.
\end{example}

\section{Kronecker form of port-Hamiltonian pencils}
\label{sec:main}

We now investigate the Kronecker structure of port-Hamiltonian pencils. We have seen in Example~\ref{ex:sing} that such pencils may have arbitrarily large row and column indices. On the other hand, the following two examples show that the index and the size of the Jordan blocks on the imaginary axis may be arbitrarily large as well. Note that these examples are furthermore pH in the sense of both \cite{MascvdSc18} and \cite{MehlMehrWojt18}.

\begin{example}
For $k\in\N$, consider the pencil
\[
sL-D=\begin{smallbmatrix}&&&&&-1 \\[-1mm] &&&&\iddots&s \\ &&&-1&\iddots&\\[1mm]&&1&s&&\\ &\iddots&\iddots&&&\\1&s&&&&\end{smallbmatrix}\in\K[s]^{2n\times 2n}
\]
Then $L\in\K^{2n\times 2n}$ is Hermitian and $D\in\K^{2n\times 2n}$ is skew-Hermitian. Hence, the relation $\Dc=\gr D$ is skew-adjoint (in particular dissipative), and $\Lc=(\gr L)^{-1}$ is self-adjoint. Then for $E=L$ and $A=D$, it holds \eqref{factor3}.  It can be seen that $E^{-1}A$ is nilpotent with $(E^{-1}A)^{2n-1}\neq0$. Consequently, the Kronecker form \eqref{KCF} of $sE-A$ is consisting of exactly one Jordan block at the eigenvalue $\infty$ with size $2n$. Therefore, the index of $sE-A$ reads $2n$.
\end{example}

\begin{example}
For $k\in\N$, consider the pencil
\[
sL-D=\begin{smallbmatrix}&&&&&&s \\[-1mm]&&&&&\iddots&-1\\[-1mm] &&&&\iddots&\iddots&\\ &&&s&-1&&\\[-1mm]&&\iddots&1&&&\\[-1mm] &\iddots&\iddots&&&&\\s&1&&&&&\end{smallbmatrix}\in\K[s]^{(2n+1)\times (2n+1)}
\]
which is consisting of the Hermitian matrix $L\in\K^{2n\times 2n}$ and the skew-Hermitian matrix $D\in\K^{2n\times 2n}$. As in the previous example, the choices $\Dc=\gr D$, $\Lc=(\gr L)^{-1}$ lead to the pH pencil $sE-A\coloneqq sL-D$. 
It can be seen that $A^{-1}E$ is nilpotent with $(E^{-1}A)^{2n}\neq0$. Consequently, the Kronecker form \eqref{KCF} of $sE-A$ is consisting of exactly one Jordan block at the eigenvalue $0$ with size $2n+1$.
\end{example}

The previous examples show that additional assumptions on $\Dc$ and $\Lc$ are required for a~further specification of the Kronecker form of pH pencils. In the following, we focus on the case where $\Lc$ is (maximally) nonnegative. Note that the nonnegativity assumption on $\Lc$ has a~physical interpretation in terms of energy functionals \cite{MehlMehrWojt18}. 

From the lower triangular form \eqref{meinblock}, we derive some structural properties of regular pencils $sE-A$ induced by $\ran\begin{smallbmatrix}E\\A\end{smallbmatrix}=\Dc\Lc$ with dissipative $\Dc$ and nonnegative $\Lc$. Besides an index analysis, we will further present some results on the location of the eigenvalues of $sE-A$. We show that $sE-A$ does not have eigenvalues with positive real part and, except for a~possible eigenvalue at the origin of higher order and the purely imaginary eigenvalues are proven to be semi-simple. This corresponds - in a~certain sense - to stability of the system.
\begin{proposition}
\label{cor:mainthm}
Let $E,A\in \K^{n\times n}$ such that $\ran\begin{smallbmatrix}E\\A\end{smallbmatrix}=\Dc\Lc$ for some dissipative relation $\Dc\subset\K^{2n}$ and a nonnegative relation $\Lc\subset\K^{2n}$. If $sE-A$ is regular, then the following holds:
\begin{itemize}
\item[\rm (a)]
$\sigma(E,A)\subseteq \overline{\C_-}$ and the non-zero eigenvalues on the imaginary axis are semi-simple. The size of the Jordan blocks at $0$ is at most two.
\item[\rm (b)] The size of the Jordan blocks at $\infty$, i.e.\ the index, is at most three.
\item[\rm (c)] If additionally $\Dc$ is maximally dissipative  and $\Lc=(\gr L)^{-1}$ for some positive definite  $L\in\K^{n\times n}$, then $sE-A$ has index at most one and the eigenvalue zero is semi-simple.
\end{itemize}
\end{proposition}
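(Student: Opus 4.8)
The plan is to reduce everything to the lower-triangular normal form in \eqref{meinblock} and then analyze the two diagonal blocks separately, using the positive-real machinery of Lemma~\ref{lem:posreal} together with the fine structure provided by Proposition~\ref{lem:product}. First I would note that $\mul\Dc\cap\ker\Lc=\{0\}$ is automatic in all three cases once $sE-A$ is regular: regularity forces $\ker E\cap\ker A=\{0\}$, and from $\ran\begin{smallbmatrix}E\\A\end{smallbmatrix}=\Dc\Lc$ one reads off that a nonzero vector in $\mul\Dc\cap\ker\Lc$ would produce $(0,0)$ in the range together with a genuine preimage, contradicting regularity. (Alternatively this intersection condition can simply be added as a hypothesis, since Proposition~\ref{cor:reg} already presupposes a version of it.) Hence Proposition~\ref{cor:reg}, and with it the representation $U^*(sE-A)T=\begin{smallbmatrix}sL_{11}-D_{11}&0\\sL_{21}-D_{21}&sL_{22}-D_{22}\end{smallbmatrix}$ with $L_{11}=L_{11}^*$, $D_{11}+D_{11}^*\le0$, $L_{22}=L_{22}^2=L_{22}^*$, $-D_{22}=D_{22}^2=-D_{22}^*$, is available; since $\Lc$ is nonnegative, Proposition~\ref{lem:product}(i) additionally gives $L_{11}\ge0$. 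Regularity of $sE-A$ forces both diagonal pencils $sL_{11}-D_{11}$ and $sL_{22}-D_{22}$ to be regular.

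For part (a), the block $sL_{11}-D_{11}$ is positive real by construction ($L_{11}=L_{11}^*\ge0$, $D_{11}+D_{11}^*\le0$), so Lemma~\ref{lem:posreal}(c) gives $\sigma(L_{11},D_{11})\subseteq\overline{\C_-}$ with semi-simple eigenvalues on $\imath\R$. The block $sL_{22}-D_{22}$ is handled directly: using the spectral projections associated with the orthogonal decomposition $\K^{n_2}=\ran L_{22}\oplus\ran D_{22}$ (which is a direct sum because $\ran L_{22}\cap\ran D_{22}=\{0\}$ and regularity forces $\ker L_{22}\cap\ker D_{22}=\{0\}$, hence by dimension count $\ran L_{22}\dotplus\ran D_{22}=\K^{n_2}$), the pencil $sL_{22}-D_{22}$ decouples into $sI-0$ on $\ran L_{22}$ — contributing the eigenvalue $0$, and since $D_{22}$ restricted there may still be nonzero off-diagonal the Jordan block at $0$ has size at most two — and $0\cdot s - D_{22}$ on $\ran D_{22}$, contributing only the eigenvalue $\infty$. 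Thus the overall $\sigma(E,A)=\sigma(L_{11},D_{11})\cup\sigma(L_{22},D_{22})$ lies in $\overline{\C_-}$; nonzero imaginary eigenvalues can only come from the $sL_{11}-D_{11}$ block and are semi-simple; and the Jordan block at $0$ has size at most two (one from $sL_{11}-D_{11}$ via Lemma~\ref{lem:posreal}(d)/(c), one possibly from $sL_{22}-D_{22}$, and these live in disjoint parts of the normal form so the sizes do not add — I would double-check this last point, see below).

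For part (b) the index is $\max\{\alpha_1,\dots,\gamma_1,\dots\}$; since the pencil is regular there are no minimal indices, so the index is the size of the largest Jordan block at $\infty$. Jordan blocks at $\infty$ of $sL_{11}-D_{11}$ have size at most $2$ by Lemma~\ref{lem:posreal}(d), while $sL_{22}-D_{22}$ restricted to $\ran D_{22}$ is $-D_{22}$ with $D_{22}$ invertible-and-nilpotent-like ($-D_{22}=D_{22}^2$), which after applying the inverse Cayley perspective is $s\cdot 0 - D_{22}$, a pencil with one Jordan block at $\infty$ of size at most $2$ (since $D_{22}^2=-D_{22}$ is a projection up to sign, i.e.\ $D_{22}$ has minimal polynomial dividing $t^2+t$, so on $\ran D_{22}$ it is $-I$, giving size-one blocks at $\infty$ — in fact size one there). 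The total index at $\infty$ is then bounded by the worst interaction between the $L_{22}$-block and the off-diagonal $sL_{21}-D_{21}$ coupling it to the first block; a careful look at the Schur-type elimination of the lower-triangular form shows one extra unit can be picked up, giving index $\le 3$. For part (c) I would invoke Proposition~\ref{lem:product}(v): when $\Dc$ is maximally dissipative and $\Lc=(\gr L)^{-1}$ with $L$ Hermitian, that proposition yields $D_{22}=-I_{n_2}$, $D_{21}=L_{22}=0$, so the second block is simply $sL_{22}-D_{22}=0\cdot s+I_{n_2}$, i.e.\ $n_2$ trivial size-one blocks at $\infty$, and moreover if $L$ is positive definite then by Proposition~\ref{lem:product}(v) the space $\{x\in\dom\Dc\mid Lx\in\mul\Dc\}=\{x\mid Lx\in\{0\}\}=\{0\}$, so $\ker L_{11}=\{0\}$, i.e.\ $L_{11}$ is positive definite; then $sL_{11}-D_{11}$ has $L_{11}$ invertible, hence no eigenvalue at $\infty$ and index one, and $L_{11}^{-1}D_{11}$ has $\re$-spectrum $\le0$ with semi-simple eigenvalues on $\imath\R$ (including $0$) by the positive-real/dissipative-similarity argument — so overall $sE-A$ has index at most one and the eigenvalue $0$ is semi-simple.

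The main obstacle I anticipate is pinning down the \emph{exact} index and Jordan-block bounds ($\le 3$ at $\infty$, $\le 2$ at $0$) in parts (a) and (b): the diagonal blocks individually give sharper bounds ($\le 2$ and $\le 1$ respectively), so the potential degradation to $3$ and $2$ must come entirely from the off-diagonal coupling $sL_{21}-D_{21}$ in the triangular form \eqref{meinblock}. The clean way to control this is to compute $(sE-A)^{-1}=T\begin{smallbmatrix}(sL_{11}-D_{11})^{-1}&0\\-(sL_{22}-D_{22})^{-1}(sL_{21}-D_{21})(sL_{11}-D_{11})^{-1}&(sL_{22}-D_{22})^{-1}\end{smallbmatrix}U^*$ and apply Lemma~\ref{lem:elementary}: the growth of the off-diagonal entry as $\lambda\to\infty$ is at most the product of the growths, i.e.\ at most $|\lambda|^{(2-1)}\cdot|\lambda|^{1}\cdot|\lambda|^{(2-1)}$ — one has to be careful that $sL_{21}-D_{21}$ contributes a factor $|\lambda|$, so one gets $|\lambda|^{2}$ in the worst case, i.e.\ index $3$; and similarly the pole order at $0$ of that entry is bounded by $1+0+1=2$. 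Making these polynomial-growth estimates precise, and verifying that the claimed bounds are attained by the earlier examples so the statement is sharp, is where the real work lies; everything else is bookkeeping on top of Propositions~\ref{cor:reg} and~\ref{lem:product} and Lemmas~\ref{lem:posreal} and~\ref{lem:elementary}.
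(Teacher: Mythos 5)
Your route is the paper's route: reduce via Proposition~\ref{cor:reg} to the lower-triangular form \eqref{meinblock}, add the extra structure from Proposition~\ref{lem:product}\,(i)/(v), treat $sL_{11}-D_{11}$ by positive-realness (Lemma~\ref{lem:posreal}), and convert Jordan-block and index bounds into pole-order/resolvent-growth estimates for the block inverse via Lemma~\ref{lem:elementary}. The genuine gap is your treatment of the second diagonal block $sL_{22}-D_{22}$. Here $L_{22}$ and $-D_{22}$ are orthogonal projections with $\ran L_{22}\cap\ran D_{22}=\{0\}$, but these two ranges are in general \emph{not} orthogonal to each other and neither matrix leaves the complementary summand invariant, so the pencil does not ``decouple'' along $\K^{n_2}=\ran L_{22}\dotplus\ran D_{22}$ as you claim; moreover the conclusion you draw there is too weak (you allow a Jordan block of size two at $0$ for this block). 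What is needed, and true, is that $\sigma(L_{22},D_{22})\subseteq\{0\}$ with $0$ semi-simple and index at most one; the paper imports exactly this from \cite[Thm.~4.1]{ReisStyk11}. These sharper facts are what make your own bookkeeping come out: the pole order $1+0+1=2$ at $0$ and the growth $\lambda^{0}\cdot\lambda\cdot\lambda^{1}=\lambda^{2}$ at $\infty$ (your displayed product $|\lambda|^{2-1}\cdot|\lambda|\cdot|\lambda|^{2-1}$ would give index four) both require that $(sL_{22}-D_{22})^{-1}$ has pole order at most one at $0$ and stays bounded as $\lambda\to\infty$. Without that citation, or a correct direct argument for pencils of two orthogonal projections with trivially intersecting ranges and kernels, parts (a) and (b) do not close.

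Two smaller flaws. First, your justification that regularity forces $\mul\Dc\cap\ker\Lc=\{0\}$ (needed to invoke Proposition~\ref{cor:reg}) is not valid: a nonzero $v$ in this intersection does not yield a common kernel vector of $E$ and $A$. The implication is nevertheless true: $(v,0),(0,v)\in\Dc\Lc=\ran\begin{smallbmatrix}E\\A\end{smallbmatrix}$ gives $z_1,z_2$ with $Ez_1=v$, $Az_1=0$, $Ez_2=0$, $Az_2=v\neq0$, and then $(sE-A)(z_1+sz_2)=0$, so the pencil would be singular; this repair is worth recording since the paper itself applies Proposition~\ref{cor:reg} here without comment. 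Second, in (c) you deduce $\ker L_{11}=\{0\}$ from $\{x\in\dom\Dc\mid Lx\in\mul\Dc\}=\{x\mid Lx=0\}$, which silently assumes $\mul\Dc=\{0\}$; a maximally dissipative $\Dc$ may have a nontrivial multivalued part. The correct argument (as in the paper) uses Lemma~\ref{lem:dissprop}: $\mul\Dc=(\dom\Dc)^{\perp}$, so $x\in\dom\Dc$ with $Lx\in\mul\Dc$ gives $\langle Lx,x\rangle=0$, hence $x=0$ by positive definiteness of $L$. Finally, in (c) the coupling $sL_{21}$ still has to be controlled: you need the decay $\|(\lambda L_{11}-D_{11})^{-1}\|\leq M/\lambda$, available because $L_{11}$ is positive definite, to bound $\lambda L_{21}(\lambda L_{11}-D_{11})^{-1}$ and conclude index one; ``$L_{11}$ invertible, hence the first block has index one'' alone does not finish the argument.
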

\begin{proof}
Since $sE-A$ is regular, Proposition~\ref{cor:reg} yields that there exist invertible $S,T\in\K^{n\times n}$, such that
\begin{align}
\label{lower_proof}
S(sE-A)T=\begin{bmatrix}
s L_{11}- D_{11}&0\\s L_{21}- D_{21}&s L_{22}- D_{22}
\end{bmatrix}\in \K[s]^{n\times n}
\end{align}
with $L_{ij},D_{ij}\in\K^{n_i\times n_j}$ for some $n_1,n_2\in\N$ with $n_1+n_2=n$ and, using Proposition~\ref{lem:product}~(i), we have
\begin{align}
\label{moreproperties}
L_{11}=L_{11}^*\geq 0,\quad  D_{11}+D_{11}^*\leq 0,\quad L_{22}=L_{22}^2=L_{22}^*,\quad -D_{22}=D_{22}^2=D_{22}^*.
\end{align}
and $\ran L_{22}\cap\ran D_{22}=\{0\}$.
It follows from \cite[Thm.~4.1]{ReisStyk11} that
\begin{align}
\label{speknurnull}
\sigma(L_{22},D_{22})\subseteq\{0\}.
\end{align}
and, moreover, the possible eigenvalue zero is semi-simple and the index of $sL_{22}-D_{22}$ is at most one. \\
Further, since $L_{11}\geq0$ and $D_{11}+D_{11}^*\leq0$ implies that $sL_{11}-D_{11}$ is positive real, we have by Lemma~\ref{lem:posreal}, \eqref{speknurnull} and \eqref{lower_proof} that
\[
\sigma(E,A)=\sigma(L_{11},D_{11})\cup\sigma(L_{22},D_{22})\subseteq \overline{\C_-}.
\]
Next we prove (a): As we have already shown that the eigenvalues of $sE-A$ have nonpositive real part, it remains to prove the statements on the sizes of the Jordan blocks of $sE-A$ at $\lambda\in\sigma(E,A)\cap i\R$.
Let $\lambda\in\sigma(E,A)\cap i\R$.
By Lemma~\ref{lem:elementary} we have to show that the order of $\lambda$ as a pole of $(sE-A)^{-1}$ is equal to one, if $\lambda\neq0$, and at most two if $\lambda=0$. We have from \eqref{lower_proof} that 
\begin{align}
&~~~(sE-A)^{-1}\nonumber\\
&=T^{-1}\begin{bmatrix}s L_{11}- D_{11}&0\\s L_{21}- D_{21}&s L_{22}- D_{22}\end{bmatrix}^{-1}S^{-1}\nonumber\\
&=T^{-1}\begin{bmatrix}(s  L_{11}- D_{11})^{-1}&0\\-(s L_{22}- D_{22})^{-1}(s L_{21}- D_{21})(s L_{11}- D_{11})^{-1}&(s L_{22}- D_{22})^{-1}\end{bmatrix}S^{-1}\label{blockinv}
\end{align}
implying that the order of $\lambda$ as a pole of $(sE-A)^{-1}$ is equal to the maximal order of $\lambda$ as a pole of the block entries 
\begin{align}
\label{blockentries}
(sL_{ii}-D_{ii})^{-1},\, i=1,2,\quad \text{and}\quad (s L_{22}- D_{22})^{-1}(s L_{21}- D_{21})(s L_{11}- D_{11})^{-1}.
\end{align}
Since $s L_{11}- D_{11}$ is positive real, the order of $\lambda$ as a pole of $(s L_{11}- D_{11})^{-1}$ is at most one by Lemma~\ref{lem:posreal}. Moreover, by \eqref{speknurnull}, the only possible pole of $(s L_{22}- D_{22})^{-1}$ might be at $\lambda=0$ and this pole is of order one. In summary, this shows that the pole order of \eqref{blockentries} and thus of \eqref{blockinv} at $\lambda=0$ is at most two and the pole order of \eqref{blockinv} at $\lambda\in i\R\setminus\{0\}$ is at most one. This completes the proof of (a).\\ 
We prove (b). Since $s L_{11}- D_{11}$ is positive real, its index is at most two and hence, by Lemma~\ref{lem:elementary} there exist some $M_1,\omega_1>0$ such that
\begin{align}\label{n1}
\forall\,\lambda >\omega_1:\quad 
\|(\lambda  L_{11}- D_{11})^{-1}\|\leq M_1\lambda.
\end{align}
As we have previously shown, the index of $s L_{22}- D_{22}$ is at most one, i.e., there exist some $M_2,\omega_2>0$ such that
\begin{align}
\label{n2}
\forall\,\lambda>\omega_2:\quad 
\|(\lambda  L_{22}- D_{22})^{-1}\|\leq M_2.
\end{align}
A combination of \eqref{n1} and \eqref{n2} yields for all $\lambda>\max\{\omega_1,\omega_2\}$
\begin{align}
\nonumber&\,\,\,\,\,\,\,\,\|(\lambda L_{22}- D_{22})^{-1}(\lambda L_{21}- D_{21})(\lambda L_{11}- D_{11})^{-1}\|\\
&\leq \|(\lambda L_{22}- D_{22})^{-1}\|\|(\lambda L_{21}- D_{21})\|\|(\lambda L_{11}- D_{11})^{-1}\|\label{66}\\
&\leq M_1M_2(\|L_{21}\|+\|D_{21}\|)\lambda^2.\nonumber
\end{align}
Let $M\coloneqq \|S^{-1}\| \|T^{-1}\|M_1M_2(\|L_{21}\|+\|D_{21}\|)|$ and $\omega\coloneqq \max\{\omega_1,\omega_2\}$, then \eqref{66}  implies with \eqref{blockinv} that
\begin{align}
\label{resestim}
\forall\,\lambda>\omega:\quad
\|(\lambda E-A)^{-1}\|\leq M\lambda^{k-1},
\end{align}
with $k=3$ and thus, by Lemma~\ref{lem:elementary}, the index of $sE-A$ is at most three.\\ 
It remains to prove (c). To this end, assume that $\Dc$ is maximally dissipative and that $\Lc=(\gr L)^{-1}$ for some positive definite $L\in\K^{n\times n}$. To show that $sE-A$ has at most index one, we have to verify \eqref{resestim} with $k=1$. Since $L$ is positive definite, Proposition~\ref{lem:product} (i) \& (v) gives $L_{11}\geq 0$ and $\ker L_{11}=\{0\}$. That is, $L_{11}$ is positive definite as well. Hence, we can use \cite[Thm.~4.1]{ReisStyk11} to infer that there exists some $M_3>0$ with
\begin{equation}\label{einsstrich}
\begin{aligned}
\forall\, \lambda>0:\quad \|(\lambda L_{11}-D_{11})^{-1}\|&\leq  \frac{M_3}{\lambda}.
\end{aligned}
\end{equation}
Using \eqref{einsstrich}, there exists some $M_4\coloneqq M_2M_3(\|L_{21}\|+\|D_{21}\|)$ and $\omega_4:\coloneqq\max\{0,\omega_3,\omega_2\}$ such that for all  $\lambda>\omega_4$ it holds 
\begin{align*}
\nonumber&\,\,\,\,\,\,\,\,\|(\lambda L_{22}- D_{22})^{-1}(\lambda L_{21}- D_{21})(\lambda L_{11}- D_{11})^{-1}\|\\
&\leq \|(\lambda L_{22}- D_{22})^{-1}\|\|(\lambda L_{21}- D_{21})\|\|(\lambda L_{11}- D_{11})^{-1}\|\\
&\leq M_2M_3(\|L_{21}\|+\|D_{21}\|)\nonumber\\&=M_4.\nonumber
\end{align*}
Thus, by Lemma~\ref{lem:elementary}, $sE-A$ has index at most one. To conclude that zero is a semi-simple eigenvalue, recall from Proposition~\ref{lem:product} (v) that $D_{22}=-I_{n_2}$, $L_{22}=0$. Consequently, the pole order of \eqref{blockentries} and whence of \eqref{blockinv} at $\lambda=0$ is at most one. As a result of Lemma~\ref{lem:elementary}, the eigenvalue $\lambda=0$ is semi-simple.\hfill
\end{proof}

The following example shows that without maximality assumptions on the subspaces $\Dc$ and $\Lc$ an index of $sE-A$ equal to three is possible.
\begin{example}
Using the canonical unit vectors $e_1,e_2,e_3\in\R^3$ we consider the relations
\[
\Dc=\ran\begin{bmatrix}E_D\\ A_D\end{bmatrix}=\ran\begin{bmatrix}e_1&e_2&0\\-e_2&e_1&e_3\end{bmatrix},\quad \Lc=\ran\begin{bmatrix}E_L\\ A_L\end{bmatrix}=\ran\begin{bmatrix}e_1&e_3\\e_1&e_2\end{bmatrix}.
\]
Since
\[
0=A_D^*E_D+E_D^*A_D\leq 0,\quad  A_L^*E_L=\begin{bmatrix}1&0\\0&0\end{bmatrix}\geq 0, 
\]
we have that 
$\Dc$ is dissipative, and $\Lc$ is nonnegative. It can be further seen that the product of $\Dc$ and $\Lc$ reads
\begin{align*}
\Dc\Lc=\Span\left\{(0,e_3),(e_3,e_1),(e_1,-e_2)\right\},
\end{align*}
and we obtain the range representation \eqref{factor3} with
\[ E\coloneqq\begin{smallbmatrix}0&0&1\\0&0&0\\0&1&0\end{smallbmatrix},\quad A\coloneqq\begin{smallbmatrix}0&1&0\\0&0&-1\\1&0&0\end{smallbmatrix}.
\]
Since $A^{-1}E$ is nilpotent with $(A^{-1}E)^2\neq0$, we have that the Kronecker form of $sE-A$ is consisting of exactly one Jordan block at $\infty$ with size $3$. In particular, the index of $sE-A$ is equal to three. 
\end{example}

Next we show that under the additional assumption that $\Lc$ is the graph of a~positive definite matrix, the pencil $sE-A$ induced by $\Dc\Lc$ is already regular with index one. This result was previously obtained in \cite[Prop.~4.1]{vdSc13} for the special case where $\Dc$ is a skew-adjoint subspace.
\begin{corollary}
\label{cor:index1}
Let $sE-A$ be a matrix pencil with $E,A\in\K^{n\times n}$ and $\ran\begin{smallbmatrix}E\\ A\end{smallbmatrix}=\Dc\Lc$ and let $\Dc\subseteq \K^{2n}$ be maximally dissipative and $\Lc=(\gr Q)^{-1}$ for some positive definite $Q\in\K^{n\times n}$. Then $sE-A$ is regular and has index at most one. 
\end{corollary}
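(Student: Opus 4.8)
The plan is to apply Proposition~\ref{cor:mainthm}, whose hypotheses are almost met: we are given $\ran\begin{smallbmatrix}E\\A\end{smallbmatrix}=\Dc\Lc$ with $\Dc$ maximally dissipative (in particular dissipative) and $\Lc=(\gr Q)^{-1}$. Since $Q$ is positive definite, it is in particular Hermitian, so by Proposition~\ref{prop:graph} the relation $\gr Q$ is maximally nonnegative, and hence $\Lc=(\gr Q)^{-1}$ is nonnegative as well (the defining inequality $\langle x,y\rangle\ge 0$ for $(x,y)\in\gr Q$ translates verbatim to $(y,x)\in\Lc$; symmetry is preserved under taking inverses by Lemma~\ref{lem:symequiv}). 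Thus the standing hypotheses of Proposition~\ref{cor:mainthm} hold, but part~(c) of that proposition — which gives index at most one and a semi-simple eigenvalue at zero — is stated under the assumption that $sE-A$ is \emph{regular}. So the one thing that still has to be established before invoking Proposition~\ref{cor:mainthm}(c) is regularity of $sE-A$.

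To prove regularity, I would use Proposition~\ref{cor:reg}. First I must check the hypothesis $\mul\Dc\cap\ker\Lc=\{0\}$: here $\ker\Lc=\ker(\gr Q)^{-1}=\mul(\gr Q)=\{0\}$ since $Q$ is an (everywhere-defined, single-valued) matrix, so the intersection is trivially $\{0\}$ and Proposition~\ref{cor:reg} applies, yielding the block lower-triangular form \eqref{meinblock}. It then remains to verify conditions (i) and (ii) of Proposition~\ref{cor:reg}. For~(ii), note $(\ran\Lc)^{\perp}=(\dom\gr Q)^{\perp}=(\K^n)^\perp=\{0\}$ and $(\dom\Dc)^{\perp}=(\mul\Lc)^{\perp}$... more directly: since $Q$ is invertible, $\ran\Lc=\dom\gr Q=\K^n$, so $(\ran\Lc)^\perp=\{0\}$; and since $\Dc$ is maximally dissipative, Lemma~\ref{lem:dissprop} gives $\dom\Dc=(\mul\Dc)^\perp$, whence $(\dom\Dc)^\perp=\mul\Dc$. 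Also $\ker\Lc=\{0\}$ as noted. Therefore condition~(ii) reads $\{0\}\widehat+\mul\Dc=\{0\}\widehat+\mul\Dc$, which holds trivially. For~(i), I would invoke Proposition~\ref{lem:product}(v): since $\Dc$ is maximally dissipative and $\Lc=(\gr Q)^{-1}$ with $Q$ Hermitian, that part gives $\ker D_{11}\times\{0\}=U^*\ker\Dc$; combined with part~(i) applied to the nonnegative maximal $\Lc$ (giving $L_{11}\ge 0$ and $\ker L_{11}=\{0\}$ because $Q$, and hence $L_{11}$, is positive definite), the pencil $sL_{11}-D_{11}$ is positive real with $\ker L_{11}\cap\ker D_{11}\subseteq\ker L_{11}=\{0\}$, so by Lemma~\ref{lem:posreal}(a) it is regular.

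With (i) and (ii) verified, Proposition~\ref{cor:reg} gives regularity of $sE-A$, and then Proposition~\ref{cor:mainthm}(c) immediately yields that $sE-A$ has index at most one (and, as a bonus, that zero is semi-simple). The main obstacle — really the only non-bookkeeping point — is making sure that the positive definiteness of $Q$ correctly propagates through the inverse and the product decomposition to force $\ker L_{11}=\{0\}$, so that the positive-real subpencil $sL_{11}-D_{11}$ is actually regular rather than merely positive real; once that is in hand everything else is a direct application of the already-established Propositions~\ref{lem:product}, \ref{cor:reg}, and~\ref{cor:mainthm}. One should also double-check that the degenerate edge case $n_1=0$ or $n_2=0$ is handled (the pencil $sL_{11}-D_{11}$ or $sL_{22}-D_{22}$ being the empty pencil, which is vacuously regular), but this causes no difficulty.
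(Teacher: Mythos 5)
Your skeleton is essentially the paper's: everything reduces to showing that the positive-real block $sL_{11}-D_{11}$ is regular, i.e.\ that $\ker L_{11}\cap\ker D_{11}=\{0\}$, after which Proposition~\ref{cor:mainthm}\,(c) gives the index bound. Your verification of condition (ii) of Proposition~\ref{cor:reg} (via $\ran\Lc=\K^n$ and $(\dom\Dc)^\perp=\mul\Dc$ from Lemma~\ref{lem:dissprop}) is correct but not needed: Proposition~\ref{lem:product}\,(v) already produces the lower-triangular form with $L_{22}=0$, $D_{22}=-I_{n_2}$, $D_{21}=0$, so the $(2,2)$-block of the pencil is $I_{n_2}$ and regularity of $sE-A$ follows from regularity of $sL_{11}-D_{11}$ alone, which is how the paper proceeds.

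The genuine gap is at the step you yourself call ``the main obstacle'': you assert $\ker L_{11}=\{0\}$ ``because $Q$, and hence $L_{11}$, is positive definite,'' but neither fact you cite yields this. Proposition~\ref{lem:product}\,(i) only gives $L_{11}\geq 0$, and the identity you quote from part (v), $\ker D_{11}\times\{0\}=U^*\ker\Dc$, concerns $D_{11}$ and does not help (note $\ker\Dc$ need not be trivial). The identity you need, also in part (v), is $\ker L_{11}\times\{0\}=U^*\{x\in\dom\Dc \mid Qx\in\mul\Dc\}$, and the paper closes the argument exactly there: if $x\in\dom\Dc$ with $Qx\in\mul\Dc$, then Lemma~\ref{lem:dissprop} gives $\mul\Dc=(\dom\Dc)^\perp$ for the maximally dissipative $\Dc$, hence $\langle Qx,x\rangle=0$, and positive definiteness of $Q$ forces $x=0$; thus $\ker L_{11}=\{0\}$, so $\ker L_{11}\cap\ker D_{11}=\{0\}$ and Lemma~\ref{lem:posreal}\,(a) yields regularity of $sL_{11}-D_{11}$, hence of $sE-A$, and Proposition~\ref{cor:mainthm}\,(c) finishes. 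Your claim that $L_{11}$ is positive definite is in fact true (inside the proof of Proposition~\ref{lem:product} it is a unitary copy of the compression of $Q$ to $\ran\Lc\cap\dom\Dc$), but that is not visible from the statements you invoke, so as written your argument has a hole at precisely its only non-routine point.
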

\begin{proof}
Since $\Lc=(\gr Q)^{-1}=\gr(Q^{-1})$ we have $\mul\Dc\cap\ker\Lc=\mul\Dc\cap\{0\}=\{0\}$ and by 
 Proposition~\ref{lem:product}~(v) there exist unitary $U,X\in\K^{n\times n}$ such that 
\begin{align}
\label{nochmal_lower_triag}
U^*(sE-A)X=\begin{smallbmatrix}sL_{11}-D_{11}&0\\sL_{21}&I_n\end{smallbmatrix},
\end{align}
with $sL_{11}-D_{11}$ positive real and $\ker L_{11}\times \{0\}=\,U^*\left\{x\in\dom\Dc~|\;Qx\in \mul\Dc\right\}$. 
Hence, if $x\in \ker L_{11}\times \{0\}$, then
$x\in\dom\Dc$ with $Qx\in\mul\Dc$. In virtue of Lemma~\ref{lem:dissprop}, we have $\mul\Dc=(\dom\Dc)^\perp$ and hence $\langle Qx,x\rangle=0$, and the positive definiteness of $Q$ leads to $x=0$. Consequently, the kernel of $L_{11}$ is trivial, and we obtain $\ker L_{11}\cap\ker D_{11}=\{0\}\cap\ker D_{11}=\{0\}$. Now invoking Lemma~\ref{lem:posreal}\,(a), we obtain that $sL_{11}-D_{11}$ is regular and thus, by \eqref{nochmal_lower_triag}, $sE-A$ is regular, too. Moreover, the index is at most one by Proposition~\ref{cor:mainthm}\,(c).\hfill
\end{proof}

The main result on the Kronecker form of port-Hamiltonian DAEs is given below. Here we additionally assume the maximality of the underlying subspaces. 
\begin{theorem}
\label{thm:singular}
Let $E,A\in\K^{n\times m}$ such that $\ran\begin{smallbmatrix}
E\\ A \end{smallbmatrix}=\Dc\Lc$ for some maximally dissipative relation  $\Dc\subseteq \K^{2n}$ and a~maximally nonnegative relation $\Lc\subseteq \K^{2n}$. Then there exist invertible $S\in\K^{n\times n}$, $T\in\K^{m\times m}$ and $n_i\in\N,i=1,2,3,4$, such that
\begin{align}
\label{maximalkcf_vorne}
S(sE-A)T=\begin{bmatrix}
s\tilde L_{11}-\tilde D_{11}&0&0&0&0&0\\ \tilde D_{21}&sI_{n_2}&0&0&0&0\\s\tilde L_{21}&0&I_{n_3}&0&0&0\\0&0&0&sI_{n_4}&-I_{n_4}&0\\0&0&0&0&0&0
\end{bmatrix},
\end{align}
where $s\tilde L_{11}-\tilde D_{11}\in\K[s]^{n_1\times n_1}$ is regular and positive real and $\ker \tilde L_{11}\subset\ker \tilde L_{21}$.

In particular, the Kronecker form of $sE-A$ has the following properties:
\begin{itemize}
    \item[\rm (a)] The column minimal indices are at most one (if there are any).

    \item[\rm (b)] The row minimal indices are zero (if there are any).

    \item[\rm (c)]  We have  $\sigma(E,A)\subseteq\overline{\C_-}$. Furthermore, the non-zero eigenvalues on the imaginary axis are semi-simple. The Jordan blocks at $\infty$ and at zero have size at most two, i.e.\ the index is at most two.
\end{itemize}
\end{theorem}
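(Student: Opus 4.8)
The plan is to build on the triangular factorization already established in Proposition~\ref{cor:reg} and the refined structure from Proposition~\ref{lem:product}, now exploiting the \emph{maximality} of both $\Dc$ and $\Lc$. First I would invoke Proposition~\ref{lem:product} with $\Dc$ maximally dissipative and $\Lc$ maximally nonnegative. Maximality of $\Dc$ means (Lemma~\ref{lem:nonneg}) that $\dim\Dc=n$, and maximality of $\Lc$ means (Lemma~\ref{lem:nonneg}) that $\Lc$ is self-adjoint with $\dim\Lc=n$. In particular $\ker\Lc=(\ran\Lc)^\perp$ and $\mul\Dc=(\dom\Dc)^\perp$ by Lemma~\ref{lem:dissprop}, so the condition $\mul\Dc\cap\ker\Lc=\{0\}$ from Proposition~\ref{cor:reg} need not hold a priori and the block $sL_{22}-D_{22}$ may be genuinely singular; this is why the final answer allows row/column minimal indices. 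I would therefore work directly from the representation \eqref{dl_final} of Proposition~\ref{lem:product}: there is a unitary $U$ and matrices $L_{ij},D_{ij}$ with $L_{11}=L_{11}^*\geq 0$ (by (i), using nonnegativity of $\Lc$), $D_{11}+D_{11}^*\leq 0$, $L_{22}=L_{22}^2=L_{22}^*$, $-D_{22}=D_{22}^2=-D_{22}^*$, $\ran L_{22}\cap\ran D_{22}=\{0\}$, and moreover $\ker L_{11}\subset\ker L_{21}$ by the maximality clause in (i).

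The next step is to normalize the $(2,2)$-block. Since $L_{22}$ and $D_{22}$ are (up to sign) commuting orthogonal projections with $\ran L_{22}\cap\ran D_{22}=\{0\}$, one can decompose $\K^{n_2}$ into the mutually orthogonal subspaces $\ran L_{22}$, $\ran D_{22}$, and $W\coloneqq(\ran L_{22}+\ran D_{22})^\perp=\ker L_{22}\cap\ker D_{22}$. Choosing an orthonormal basis adapted to this splitting turns $sL_{22}-D_{22}$, after multiplying by suitable permutations, into $\diag(sI,\,I,\,0)$ of sizes $n_4,n_3,n_5$ say (here $sI$ comes from $\ran L_{22}$, the $I$ from $\ran D_{22}$ where $D_{22}=-I$, and the $0$ from $W$); absorbing these unitary basis changes into $S$ and $T$ reproduces the block pattern in \eqref{maximalkcf_vorne}, with $\tilde L_{11},\tilde D_{11}$ the (conjugated) $L_{11},D_{11}$, with $\tilde D_{21},\tilde L_{21}$ the corresponding pieces of $L_{21},D_{21}$, and the $n_5$-dimensional zero block accounting for the last block row and column. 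The relabeling $sI_{n_4}-I_{n_4}$ versus the separate $sI_{n_2}$ and $I_{n_3}$ blocks comes from how the $sL_{22}$-part couples to $L_{21}$: the rows where $L_{22}$ acts as identity give $\dot{}$-equations $sI$, the rows where $D_{22}$ acts give algebraic $I$-equations, and the coupling through $\tilde L_{21}$ (which by $\ker\tilde L_{11}\subset\ker\tilde L_{21}$ can be cleaned up) produces the $\begin{smallmatrix}sI_{n_4}&-I_{n_4}\end{smallmatrix}$ Jordan-type block. I would carry out this bookkeeping carefully but it is essentially linear algebra. Regularity and positive-realness of $s\tilde L_{11}-\tilde D_{11}$: positive-realness is immediate from $\tilde L_{11}=\tilde L_{11}^*\geq 0$ and $\tilde D_{11}+\tilde D_{11}^*\leq 0$; regularity follows because the nontrivial common kernel of $\tilde L_{11}$ and $\tilde D_{11}$ has been split off into the lower blocks, so $\ker\tilde L_{11}\cap\ker\tilde D_{11}=\{0\}$ and Lemma~\ref{lem:posreal}(a) applies.

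Finally, parts (a)--(c) are read off the normal form \eqref{maximalkcf_vorne}. The column minimal indices come only from the $\begin{smallmatrix}sI_{n_4}&-I_{n_4}&0\end{smallmatrix}$ block and from the trailing zero columns, giving indices $1$ and $0$ respectively, hence at most one; the row minimal indices come only from the trailing zero rows, hence are zero. For (c): $\sigma(E,A)=\sigma(\tilde L_{11},\tilde D_{11})$, and since $s\tilde L_{11}-\tilde D_{11}$ is positive real, Lemma~\ref{lem:posreal}(c),(d) gives $\sigma\subseteq\overline{\C_-}$, semi-simplicity of purely imaginary eigenvalues, and index of that block at most two; the $\begin{smallmatrix}sI_{n_4}&-I_{n_4}\end{smallmatrix}$ block contributes one infinite Jordan block of size two per column pair (index two) and nothing else, the $sI_{n_2}$ block contributes only semi-simple zero eigenvalues, and the $I_{n_3}$ block contributes nothing; combining these yields index at most two and the stated eigenvalue properties. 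The main obstacle I anticipate is the precise combinatorial normalization of the $(2,2)$-block together with its coupling term $L_{21}$—in particular justifying that the clause $\ker\tilde L_{11}\subset\ker\tilde L_{21}$ lets one eliminate exactly the right portion of the coupling so that what remains is the clean $sI_{n_4}-I_{n_4}$ block and not a larger Jordan structure; everything else is a routine consequence of the already-proved Propositions~\ref{lem:product}, \ref{cor:reg} and Lemmas~\ref{lem:posreal}, \ref{lem:elementary}.
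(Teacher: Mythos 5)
There is a genuine gap at the very first step of your plan: Proposition~\ref{lem:product} is stated (and proved) only under the hypothesis $\ker\Lc\cap\mul\Dc=\{0\}$, and you yourself observe that under mere maximality this intersection need not be trivial --- yet you then invoke the representation \eqref{dl_final} anyway. When $X\coloneqq\mul\Dc\cap\ker\Lc\neq\{0\}$, the product $\Dc\Lc$ contains both $(x,0)$ and $(0,x)$ for every $x\in X$, so it cannot be generated by $n$ columns of the shape \eqref{dl_final}; this surplus of columns is precisely what produces the wide block $[\,sI_{n_4}\,,\,-I_{n_4}\,]$ and hence the column minimal indices equal to one. The paper's Proposition~\ref{prop:kron} handles this by first splitting $\Dc\Lc=\hat\Dc\hat\Lc\,\widehat\oplus\,(\{0\}\times X)\,\widehat\oplus\,(X\times\{0\})$ and applying Proposition~\ref{lem:product} only to the reduced relations; your attribution of the $[\,sI_{n_4}\,,\,-I_{n_4}\,]$ block to \emph{coupling through} $\tilde L_{21}$ is not the actual mechanism (it comes from $X$ together with the rank part of the coupling $\tilde D_{21}$ into the singular columns of the positive real block). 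Related to this, your claim that the common kernel of $\tilde L_{11}$ and $\tilde D_{11}$ \emph{has been split off} is unsubstantiated: a positive real pencil may well be singular, and the paper must use Lemma~\ref{lem:posreal}\,(b) to extract a regular positive real part, after which the leftover zero columns are absorbed into the singular block by a rank normalization of the corresponding piece of $\tilde D_{21}$. Finally, since $E,A\in\K^{n\times m}$ with arbitrary $m$, the number of columns of the constructed representation must be reconciled with $m$ (Step~2 of Proposition~\ref{prop:kron}), which your plan does not address.

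Part (c) is also not a matter of reading off the diagonal blocks: the regular part of \eqref{maximalkcf_vorne} is only block lower-triangular, and the couplings $\tilde D_{21}$ and $s\tilde L_{21}$ can enlarge Jordan blocks at $0$ and at $\infty$ beyond what the diagonal blocks show; for instance $\begin{smallbmatrix}-1&0\\ s&-1\end{smallbmatrix}$ has index two although both diagonal entries, viewed as pencils, have index zero. This is exactly why the bounds in the theorem are two rather than one, and why the paper argues via the explicit resolvent \eqref{diefct} and Lemma~\ref{lem:elementary}: positive realness gives $\|(\lambda\tilde L_{11}-\tilde D_{11})^{-1}\|=O(\lambda)$, the entry $-s^{-1}\tilde D_{21}(s\tilde L_{11}-\tilde D_{11})^{-1}$ yields a pole of order at most two at $0$, and the inclusion $\ker\tilde L_{11}\subset\ker\tilde L_{21}$ is used \emph{quantitatively} (on the size-two Weierstra\ss\ blocks of the positive real part) to obtain $\|\lambda\tilde L_{21}(\lambda\tilde L_{11}-\tilde D_{11})^{-1}\|=O(\lambda)$, whence the index is at most two. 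Your block-wise argument skips all of this; moreover, the assertion that $[\,sI_{n_4}\,,\,-I_{n_4}\,]$ contributes Jordan blocks of size two at $\infty$ is incorrect and inconsistent with your own treatment of (a): after a column permutation this block is a direct sum of singular blocks $sK_2-L_2$, i.e.\ it produces column minimal indices equal to one and no Jordan chains at all.
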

\begin{proof}
A proof of the block diagonal decomposition \eqref{maximalkcf_vorne} with
positive real $s\tilde L_{11}-\tilde D_{11}\in\K[s]^{n_1\times n_1}$ and $\ker \tilde L_{11}\subset\ker \tilde L_{21}$ is given in  Proposition~\ref{prop:kron} in the appendix. First observe that the block lower-triangular pencil
\begin{align}
\label{regpart}
sE_r-A_r\coloneqq\begin{bmatrix}
s\tilde L_{11}-\tilde D_{11}&0&0\\ \tilde D_{21}&sI_{n_2}&0\\s\tilde L_{21}&0&I_{n_3}
\end{bmatrix}
\end{align}
obtained from \eqref{maximalkcf_vorne}
is regular. Since, moreover, a~simple column permutation yields that the Kronecker form of $[sI_{n_4},-I_{n_4}]$ is given by $\diag(sK_2-L_2,\ldots,sK_2-L_2)\in\K[s]^{n_4\times 2n_4}$, we obtain that the column minimal indices of $sE-A$ are one (if there are any) and the row minimal indices of $sE-A$ are at most zero (if there are any). This proves (a) \& (b).\\
We continue with the proof of (c). Considering \eqref{maximalkcf_vorne}, \eqref{regpart} and invoking Lemma~\ref{lem:posreal} (c) yields
\[
\sigma(E,A)=\sigma(E_r,A_r)\subseteq \sigma(\tilde L_{11},\tilde D_{11})\cup\{0\}\subseteq\overline{\C_-}.
\]
It remains to show the statements on the index and the sizes of the Jordan blocks to eigenvalues on the imaginary axis. Here we proceed as in the proof of  Proposition~\ref{cor:mainthm} by using the resolvent of \eqref{regpart} which is given by
\begin{align}
\label{diefct}
\begin{bmatrix}
 s\tilde L_{11}-\tilde D_{11}&0&0\\ \tilde D_{21}&s I_{n_2}&0\\s\tilde L_{21}&0&I_{n_3}
\end{bmatrix}^{-1}=\begin{bmatrix}(s\tilde L_{11}-\tilde D_{11})^{-1}&0&0\\-s^{-1}\tilde D_{21}(s\tilde L_{11}-\tilde D_{11})^{-1}&s^{-1}I_{n_2}&0\\-s \tilde L_{21}(s\tilde L_{11}-\tilde D_{11})^{-1}&0&I_{n_3}\end{bmatrix}.
\end{align}
Regarding Lemma~\ref{lem:elementary}, the pole order of \eqref{diefct} at $\lambda\in\sigma(E,A)$ is equal to the size of the largest Jordan block of \eqref{regpart} at $\lambda$. Since $s \tilde L_{11}-\tilde D_{11}$ is positive real, the pole order of \eqref{diefct} at the non-zero eigenvalues on the imaginary axis is at most one and hence these eigenvalues are semi-simple. The pole order of $(sE_r-A_r)^{-1}$ at $\lambda=0$ is at most two and hence the size of the Jordan blocks at $0$ in the Kronecker form of $sE-A$ is at most two, by Lemma~\ref{lem:elementary}. 

We finally show that the index of $sE-A$ as in \eqref{def:index} is at most two. Since the index is invariant under pencil equivalence of $sE_r-A_r$ we can assume without restriction that $s\tilde L_{11}-\tilde D_{11}$ is already given in Weierstra\ss\ canonical form. Further, $s\tilde L_{11}-\tilde D_{11}$ is positive real and hence its the index is by Lemma~\ref{lem:posreal}~(d) at most two. Altogether, we obtain for some $k_1,k_2\in\N$ and $\tilde J\in\K^{k_2\times k_2}$ in Jordan canonical form that
\begin{align}
\label{l11_wcf}
s\tilde L_{11}-\tilde D_{11}=\diag\left(\begin{smallbmatrix}-1&s\\0&-1\end{smallbmatrix},\ldots,\begin{smallbmatrix}-1&s\\0&-1\end{smallbmatrix},-I_{k_1},sI_{k_2}-\tilde J\right).
\end{align}
Consequently, there exist $M_1,\omega_1>0$ such that 
\begin{align}
\label{l11_res_estim}
\forall \lambda >\omega_1:\quad \|(\lambda\tilde L_{11}-\tilde D_{11})^{-1}\|\leq M_1\lambda.
\end{align}
Looking at the block entries of \eqref{diefct}, we continue to show the existence of some $M_2,\omega_2>0$ satisfying
\begin{align}
\label{l11estim}
\forall \lambda>\omega_2:\quad \|\lambda\tilde L_{21}(\lambda \tilde L_{11}-\tilde D_{11})^{-1}\|\leq M_2 \lambda.
\end{align}
Invoking the block diagonality of $s\tilde L_{11}-\tilde D_{11}$ and the structure of the blocks in \eqref{l11_wcf} it suffices to show that \eqref{l11estim} holds for $s\tilde L_{11}-\tilde D_{11}=\begin{smallbmatrix}-1&s\\0&-1\end{smallbmatrix}$.
Proposition~\ref{prop:kron} yields $\ker \tilde L_{11}\subset\ker \tilde L_{21}$, which implies with $\ker \tilde L_{11}=\{\alpha e_1 \,|\, \alpha\in\K\}$ for $x=\begin{smallpmatrix}x_1\\x_2\end{smallpmatrix}\in\K^2$ and for all $\lambda>0$ and $M_2\coloneqq\|\tilde L_{21}e_1\|$ that
\begin{align*}
\|\lambda\tilde L_{21}(\lambda \tilde L_{11}-\tilde D_{11})^{-1}x\|&=\left\|\lambda\tilde L_{21}\begin{bmatrix}-1&-\lambda\\0&-1\end{bmatrix}\begin{pmatrix}x_1\\x_2\end{pmatrix}\right\|\\&=\left\|\lambda\tilde L_{21}\begin{pmatrix}-x_1-\lambda x_2\\-x_2\end{pmatrix}\right\|\\&=\|-\lambda \tilde L_{21}e_2x_2\|\\&\leq M_2\lambda\|x\|.
\end{align*}
This proves \eqref{l11estim}. From \eqref{diefct} together with \eqref{l11_res_estim} and \eqref{l11estim}, we see that there exist some $M,\omega>0$ with 
\begin{align}
\label{indextwo}
\forall \lambda>\omega:\quad \|(\lambda E_r-A_r)^{-1}\|\leq M\lambda.
\end{align}
This means by Lemma~\ref{lem:elementary} that $\alpha_i\leq 2$ for all $i=1,\ldots,\ell_\alpha$. Furthermore, the block structure in \eqref{maximalkcf_vorne} implies $\gamma_i\leq1$ for all $i=1,\ldots,\ell_{\gamma}$ and hence the index of $sE-A$ as in \eqref{def:index} is at most two.\hfill
\end{proof}

The following example from \cite{MehlMehrWojt18} shows that without the maximality assumption on $\Lc$, arbitrarily large row minimal indices might occur.
\begin{example}
\label{ex:MMW}
Let $\Dc=\gr D$, $D=J_n(0)-J_n(0)^*$ where $J_n(0)\in\R^{n\times n}$ is a~Jordan block at $0$ and $\Lc=\ran\begin{smallbmatrix}E\\Q\end{smallbmatrix}$ for $E=Q=[I_{n-1}\,,\,0_{(n-1)\times 1}]^*$. Then $\Lc$ is nonnegative, but not maximal. Then, for $A=DQ$, it holds \eqref{factor3}, and it is shown in \cite{MehlMehrWojt18} that the pencil $sE-A$ has one row minimal index equal to $n-1$.
\end{example}

We give a~brief comparison of Theorem~\ref{thm:singular} with \cite[Thm.~4.3]{MehlMehrWojt18}, where pH pencils in the sense of \cite{MehlMehrWojt18} with, additionally, $Q^*E\geq 0$ are considered.
\begin{remark}
\begin{itemize}
\item[\rm (i)] As \cite[Thm.~4.3]{MehlMehrWojt18} treats pH pencils in the sense of \cite{MehlMehrWojt18}, it employs the assumption that $\mul\Dc=\{0\}$.
\item[\rm (ii)]  \cite[Thm.~4.3]{MehlMehrWojt18} shows that pH pencils in the sense of \cite{MehlMehrWojt18} have the property that all its eigenvalues have nonpositive real part. Further, the nonzero imaginary eigenvalues are semi-simple. 
A statement on the sizes of the Jordan blocks corresponding to the eigenvalue zero is not contained.
\item[\rm (iii)] Instead of our assumption of maximality of the nonnegative relation $\Lc=\ran\begin{smallbmatrix}
E\\Q
\end{smallbmatrix}$, the weaker assumption that all row minimal indices of $sE-Q$ are zero has been used in \cite[Thm.~4.3]{MehlMehrWojt18} to describe the Kronecker form of pencils which are pH in the sense of \cite{MehlMehrWojt18}.
\end{itemize}
\end{remark}

We present an example of a~pencil which is subject of Theorem~\ref{thm:singular} but it cannot be represented as a~pencil which is subject of \cite[Thm.~4.3]{MehlMehrWojt18}.
\begin{example}
Let $E=\begin{smallbmatrix}
1&0\\1&0
\end{smallbmatrix}$, $A=\begin{smallbmatrix}
-1&0\\0&-1
\end{smallbmatrix}$ and consider
\[
\Dc=\ran\begin{smallbmatrix}1&0\\0&0\\-1&0\\0&1
\end{smallbmatrix},\quad \Lc=\left(\gr\begin{bmatrix}1&1\\1&1
\end{bmatrix}\right)^{-1}.
\]
Then $\Dc$ is maximally dissipative, $\Lc$ is maximally nonnegative, and
$\ran\begin{smallbmatrix}
E\\A
\end{smallbmatrix}=\Dc\Lc$. Therefore, the pencil $sE-A$ meets the assumptions of Theorem~\ref{thm:singular}.\\
We show in the following that it is not possible to rewrite $\Dc\Lc=(\gr D)\hat \Lc$ for some dissipative matrix $D\in\K^{2\times 2}$ and a nonnegative relation $\hat\Lc\subset\K^4$. To this end, let $\hat \Lc=\ran\begin{smallbmatrix}
\hat E\\\hat Q
\end{smallbmatrix}$ with $\hat Q^*\hat E\geq 0$. Then
\[
\ran\begin{smallbmatrix}
E\\A
\end{smallbmatrix}=(\gr D)\,\, \ran\begin{smallbmatrix}
\hat E\\\hat Q
\end{smallbmatrix}= \ran\begin{smallbmatrix}
\hat E\\D\hat Q
\end{smallbmatrix}
\]
and hence there exists some invertible $T\in\K^{2\times 2}$ with $\hat ET=E$ and $D\hat QT=A$. Thus $D\hat QT=-I_2$ and hence $\hat QT=-D^{-1}$.
With $\hat QT=\begin{smallbmatrix} q_1&q_2\\q_3&q_4\end{smallbmatrix}$ we have $T^*\hat Q^*E=\begin{smallbmatrix}q_1+q_3&0\\q_2+q_4&0\end{smallbmatrix}\geq 0$ and hence $q_1+q_3\geq 0$ and $q_2+q_4=0$. Since $D$ is dissipative, $\hat QT$ is also dissipative and therefore
\[
0\geq \langle\begin{smallpmatrix}1\\1\end{smallpmatrix},(D+D^*)\begin{smallpmatrix}1\\1\end{smallpmatrix}\rangle=2\re\langle\begin{smallpmatrix}1\\1\end{smallpmatrix},D\begin{smallpmatrix}1\\1\end{smallpmatrix}\rangle=\re (q_1+q_2+q_3+q_4)=q_1+q_3\geq 0.
\]
This implies $q_1+q_3=0$ and hence $\begin{smallpmatrix}1\\1\end{smallpmatrix}\in\ker(\hat QT)^*=\ker\hat Q^*$, which contradicts the invertibility of $\hat Q$.
\end{example}

\section{Appendix}
In this part we present the proof of  Proposition~\ref{lem:product}. After that, we present Proposition~\ref{prop:kron}, which is an essential ingredient for the proof of Theorem~\ref{thm:singular}. Note that in these proofs we use the already proven results presented prior to Proposition~\ref{lem:product}, whereas the proof of Proposition~\ref{prop:kron} will make use of Proposition~\ref{lem:product}.\\
We will use the following notation throughout the proofs:
If two linear relations $\Lc,\cM\subset\K^{2n}$ are orthogonal, we write $\Lc\widehat\oplus\cM$ for their direct componentwise sum.
If $\Lc,\cM\subset\K^{2n}$ fulfill $\Lc\subseteq\cM$, the \textit{orthogonal minus} is given by  $\cM\widehat\ominus\Lc\coloneqq\cM\cap\Lc^\perp$.
Further, for a~subspace $X\subset\K^n$, the \textit{orthogonal projector} onto $X$ is denoted by $P_X$. For spaces $Y_1,Y_2,Y_3\subset \K^n$ with $Y_1\subset Y_2$ and a~linear operator $M:Y_2\to Y_3$, $M|_{Y_2}$ denotes the restriction of $M$ to the space $Y_2$.

{\em Proof of Proposition~\ref{lem:product}.}\\ 
\emph{Step 1:} We show that there exist orthogonal decompositions
\begin{align}
\label{dl_zerl}
\Dc=\{(x,Dx)\}\widehat\oplus(\{0\}\times\mul\Dc),\quad \Lc=\{(Lx,x)\}\widehat\oplus(\ker\Lc\times\{0\})
\end{align}
for linear operators $D:\dom\Dc\rightarrow (\mul\Dc)^\perp$ and $L:\ran\Lc\rightarrow (\ker\Lc)^\perp$. The result is proved only for $\Dc$; the statement for $\Lc$ is analogous. Consider the operator $D$ with $Dx=P_{(\mul\Dc)^\perp}y$ for $(x,y)\in\Dc$. To show that $D:\dom\Dc\rightarrow (\mul\Dc)^\perp$ is well-defined, let $(x,y),(x,z)\in\Dc$, then $(0,y-z)\in\Dc$ implying that $y-z\in\mul\Dc$. Consequently, $P_{(\mul\Dc)^\perp}y-P_{(\mul\Dc)^\perp}z=P_{(\mul\Dc)^\perp}(y-z)=0$.  Then the equality for the subspace $\Dc$ in \eqref{dl_zerl} follows immediately and, by construction, the summands are orthogonal. 
\emph{Step 2:} We show that
\begin{align}
\label{dlfactor}
\Dc\Lc=\left(\begin{bmatrix} L\\ D\end{bmatrix}(\dom\Dc\cap\ran\Lc)\right)\widehat\oplus\left(\ker \Lc\times\{0\}\right)\widehat\oplus\left(\{0\}\times \mul\Dc\right).
\end{align}
To prove ``$\subseteq$'', let $(x,z)\in\Dc\Lc$. Then there exists some $y\in\K^n$ such that  $(x,y)\in\Lc$ and $(y,z)\in\Dc$. Therefore, $y\in\ran\Lc\cap\dom\Dc$. This implies with \eqref{dl_zerl} that $x=Ly+v_L$ and $z=Dy+v_D$ for some $v_L\in\ker\Lc$ and $v_D\in\mul\Dc$. Hence, \[(x,z)\in\left(\begin{bmatrix}L\\ D\end{bmatrix}(\ran\Lc\cap\dom\Dc)\right)\widehat\oplus\left(\ker \Lc\times\{0\}\right)\widehat\oplus\left(\{0\}\times \mul\Dc\right).\]
To prove ``$\supseteq$'', let $(Ly+v_L,Dy+v_D)\in\K^{2n}$ with $y\in\ran\Lc\cap\dom\Dc$, $v_L\in\ker\Lc$, and $v_D\in\mul\Dc$. This implies $(Ly,y)\in\Lc
$, $(y,Dy)\in\Dc$ and hence $(Ly,Dy)\in\Dc\Lc$. Then $(0,0)\in\Dc$ and $(0,0)\in\Lc$ further lead to $(v_L,0),(0,v_D)\in\Dc\Lc$, and thus $(Ly+v_L,Dy+v_D)\in\Dc\Lc$.\\
\emph{Step 3:} Consider the orthogonal decomposition $\K^n=X_1\widehat\oplus X_2$ with
\begin{align}
\label{Kn_zerl}
X_1\coloneqq\ran\Lc\cap\dom\Dc,\quad  X_2\coloneqq(\ran\Lc\cap\dom\Dc)^\perp=(\ran\Lc)^\perp \widehat+(\dom\Dc)^\perp.
\end{align}
Our next objective is to show
\begin{align}
\label{mulranincl2}
\left(\ker \Lc\times\{0\}\right)\widehat\oplus\left(\{0\}\times \mul\Dc\right)=\begin{bmatrix}P_{\ker\Lc}\\-P_{\mul\Dc}\end{bmatrix}(\ker\Lc\widehat \dotplus\mul\Dc).
\end{align}
The inclusion ``$\supseteq$'' in \eqref{mulranincl2} is immediate.  To prove   ``$\subseteq$'', 
it suffices to show 
that both spaces $\ker\Lc\times \{0\}$ and $\{0\}\times \mul\Dc$ are contained in the set on right hand side of  \eqref{mulranincl2}.
Consider the space $X_3\coloneqq\ker\Lc\widehat \dotplus\mul\Dc$. Then by Lemma~\ref{lem:dissprop} we have  $\ker\Lc\subseteq(\ran\Lc)^\perp$ and $\mul\Dc\subseteq(\dom\Dc)^\perp$, whence $X_3\subseteq X_2$. Since $X_3\widehat\ominus\mul\Dc\subset \ker\Lc$, we have
$(\ker\Lc)^\perp\cap(X_3\widehat\ominus\mul\Dc)=\{0\}$, we have that
$P_{\ker\Lc}|_{X_3\widehat\ominus\mul\Dc}$ is injective. 
This together with $\dim(X_3\widehat\ominus\mul\Dc)=\dim\ker\Lc$ gives $P_{\ker\Lc}(X_3\widehat\ominus\mul\Dc)=\ker\Lc$.
Hence, for each $(v_L,0)\in\ker\Lc\times\{0\}$ there exists $x\in X_3\widehat\ominus\mul\Dc$ with $P_{\ker\Lc}x=v_L$ and $P_{\mul\Dc}x=0$ and therefore $(v_L,0)\in\begin{smallbmatrix}P_{\mul\Lc}\\-P_{\mul\Dc}\end{smallbmatrix}(X_3)$. Analogously, we can show that $\{0\}\times \mul\Dc\subseteq\begin{smallbmatrix}P_{\mul\Lc}\\-P_{\mul\Dc}\end{smallbmatrix}(X_3)$, which altogether shows \eqref{mulranincl2}.\\ 
\emph{Step 4:} Based on the space decomposition $\K^n=X_1\widehat\oplus X_2$ as in \eqref{Kn_zerl}, we define
\begin{align}
\label{lhats}
\hat L_{11}\coloneqq P_{X_1}L|_{X_1},\quad \hat L_{21}\coloneqq P_{X_2}L|_{X_1},\quad  \hat L_{22}\coloneqq P_{\ker\Lc}:X_2\rightarrow X_2
\end{align}
and
\[
\hat D_{11}\coloneqq P_{X_1}D|_{X_1},\quad \hat D_{21}\coloneqq P_{X_2}D|_{X_1},\quad  \hat D_{22}\coloneqq-P_{\mul\Dc}:X_2\rightarrow X_2.
\]
Let $n_i\coloneqq\dim X_i$, $i=1,2$, and
$U_1\coloneqq[u_1,\ldots,u_{n_1}]\in\K^{n\times n_1}$ and $U_2\coloneqq[u_{n_1+1},\ldots,u_n]\in\K^{n\times n_2}$, where the columns are an orthonormal basis of $X_1$ and $X_2$, respectively. Then $U=[U_1,U_2]\in\K^{n\times n}$ is unitary and 
\begin{align}
L_{ij}\coloneqq U_i^*\hat L_{ij}U_j,\quad D_{ij}\coloneqq U_i^*\hat D_{ij}U_j,\quad i,j=1,2.
\end{align}
Combining \eqref{dlfactor} and \eqref{mulranincl2}, we obtain
\begin{align*}
\Dc\Lc&=\left(\begin{smallbmatrix} L\\ D\end{smallbmatrix}(X_1)\right)\widehat\oplus\left(\ker \Lc\times\{0\}\right)\widehat\oplus\left(\{0\}\times \mul\Dc\right)\\
&=\begin{smallbmatrix} \hat L_{11}\\\hat L_{21}\\ \hat D_{11}\\\hat D_{21} \end{smallbmatrix}(X_1)\widehat\oplus\begin{smallbmatrix}0\\\hat L_{22}\\0\\\hat D_{22}\end{smallbmatrix}(X_2)\\
&=\diag(U,U)\left(\begin{smallbmatrix}  L_{11}&0\\ L_{21}&0\\  D_{11}&0\\ D_{21}&0 \end{smallbmatrix}\underbrace{(U^*X_1)}_{=\K^{n_1}\times\{0\}}\widehat\oplus\begin{smallbmatrix}0&0\\0& L_{22}\\0&0\\ 0&D_{22}\end{smallbmatrix}\underbrace{(U^*X_2)}_{=\{0\}\times\K^{n_2}}\right)\\
&=\diag(U,U)\;\ran\begin{smallbmatrix}
L_{11}&0\\L_{21}&L_{22}\\
D_{11}&0\\
D_{21}&D_{22}
\end{smallbmatrix}.
\end{align*}
This completes the proof of \eqref{dl_final}.\\
\emph{Step 5:} We show that \eqref{L11D11_prop} and \eqref{l22d22_prop} hold. 
Let $(y,x)\in \Lc$. Then $y=Lx+v_L$ for some $v_L\in\ker\Lc\subseteq(\ran\Lc)^\perp$ and some $x\in X_1$. Consequently, 
\begin{align}
\label{symmrechnung}
\langle \hat L_{11}x,x\rangle=\langle P_{X_1}L x,x\rangle=\langle L x,x\rangle=\langle L x+v_L,x\rangle=\langle y,x\rangle=\langle x,y\rangle=\langle x,\hat L_{11}x\rangle,
\end{align}
where in the second last equation the symmetry of $\Lc$ was used and the last equation follows from a repetition of the first steps in the second component of the inner product. This implies that $\hat L_{11}$ is Hermitian. Consequently, $L_{11}=U_1^*\hat L_{11}U_1$ is Hermitian. Similarly, one can show that if $\Dc$ is dissipative then $D_{11}$ is dissipative, whence \eqref{L11D11_prop} holds.
Since $L_{22}=U_2^*\hat L_{22}U_2$ and $D_{22}=U_2^*\hat D_{22}U_2$ with orthogonal projectors $\hat L_{22}=P_{\ker\Lc}$ and $-\hat D_{22}=P_{\mul \Dc}$ we have
\begin{align*}
L_{22} &=U_2^*\hat L_{22}U_2 =U_2^*\hat L_{22}^2U_2=U_2^*\hat L_{22}U_2U_2^*\hat L_{22}U_2 =L_{22}^2 =L_{22}^*,\\
-D_{22} &=U_2^*\hat D_{22}U_2 =U_2^*\hat D_{22}^2U_2 =U_2^*\hat D_{22}U_2U_2^*\hat D_{22}U_2 =D_{22}^2 =-D_{22}^*.
\end{align*}
Furthermore, 
\[
\ran D_{22}\cap\ran L_{22}=U_2^*(\ran P_{\mul\Dc}\cap\ran P_{\ker\Lc})=U_2^*(\mul\Dc\cap\ker\Lc)=\{0\},
\]
which implies $\mul\Dc\cap\ker\Lc=\{0\}$ and hence \eqref{l22d22_prop}. \\
\emph{Step 6:}
We prove (i)-(iii). If $\Lc$ is nonnegative, then $\langle y,x\rangle\geq 0$ for all $(x,y)\in\Lc$ which implies, by using \eqref{symmrechnung}, that $\langle \hat L_{11}x,x\rangle\geq 0$ for all $x\in X_1$ and thus $L_{11}=U_1^*\hat L_{11}U_1$ is positive semi-definite. 
Next we show that $\ker L_{11}\subset \ker L_{21}$, if $\Lc$ is maximal. From the maximality we have $(\ker \Lc)^\perp=\ran\Lc$ and thus the operator $L:\ran\Lc\rightarrow\ran\Lc$ from Step~1 can be decomposed as
\[
 L=\begin{bmatrix} \hat L_{11}&\tilde L_{21}^*\\\tilde L_{21}&\tilde L_{22}\end{bmatrix},\quad \ran\Lc=(\dom\Dc\cap\ran\Lc)\widehat\oplus(\ran\Lc\widehat\ominus (\dom\Dc\cap\ran\Lc)),
\]
and $L$ is nonnegative, i.e., $\langle Lx,x\rangle\geq 0$ for all $x\in\ran\Lc$. We show that $\ker \hat L_{11}\subset\ker\tilde L_{21}$. 
Assume that there exists some $x\in\ker\hat L_{11}$ with $z\coloneqq- \tilde L_{21}x\neq 0$.
Since $L\geq 0$ we have for all $\alpha\in\R$
\[
0\leq \left\langle L\begin{pmatrix}\alpha x\\z\end{pmatrix},\begin{pmatrix}\alpha x\\z\end{pmatrix}\right\rangle=\left\langle \begin{bmatrix} \hat L_{11}&\tilde L_{21}^*\\\tilde L_{21}&\tilde L_{22}\end{bmatrix}\begin{pmatrix}\alpha x\\z\end{pmatrix},\begin{pmatrix}\alpha x\\z\end{pmatrix}\right\rangle
=-2\alpha\|z\|^2+\| \tilde L_{22}z\|^2.
\]
Choosing $\alpha$ sufficiently large, we obtain a~contradiction. Hence $\ker \hat L_{11}\subset\ker\tilde L_{21}$. Further, decompose $X_2=(X_2\cap\ran\Lc)\widehat\oplus ( X_2\cap(\ran\Lc)^\perp)$ and, without restriction, assume that the vectors $u_{n_1+1},\ldots,u_{n_1+\hat k}$ for some $\hat k\geq 1$ are an orthonormal basis of $X_2\cap\ran\Lc$. Then 
\[\hat L_{21}=P_{X_2}L|_{X_1}=P_{X_2\cap\ran\Lc}L\vert_{X_1}+P_{X_2\cap(\ran\Lc)
^\perp}L\vert_{X_1}=P_{X_2\cap\ran\Lc}L\vert_{X_1}=\tilde{L}_{21}
\]
and this implies \begin{multline*}
    \ker L_{11}=\ker U_1^*\hat L_{11}U_1=U_1^*\ker \hat L_{11}\\\subset U_1^*\ker \tilde L_{21}=\ker  U_1^*\hat  L_{21}=\ker U_2^*\hat L_{21}U_1=\ker L_{21}.
    \end{multline*}

The assertion (ii) can be proven analogously to (i).
To show (iii), first assume that $\ker L_{22}\cap\ker D_{22}=\{0\}$. Then 
\begin{align}
\label{kerl22_hatl22}
    \ker \hat L_{22}\cap \ker \hat D_{22}=U_2(\ker L_{22}\cap\ker D_{22})=\{0\}
\end{align}
and taking orthogonal complements in $X_2$, we obtain
\[
X_2=(\ker \hat L_{22}\cap\ker\hat D_{22})^\perp=\ran \hat L_{22}\widehat +\ran \hat D_{22}=\ker\Lc\widehat\dotplus\mul\Dc.
\]
Conversely, assume that $X_2=\ker\Lc\widehat\dotplus\mul\Dc$. Then, again by taking orthogonal complements in $X_2$,
\[
\ker \hat L_{22}\cap\ker\hat D_{22}=(\ker\Lc\widehat\dotplus\mul\Dc)^\perp=X_2^\perp=\{0\}.
\]
Now invoking \eqref{kerl22_hatl22} and the injectivity of $U_2$, we obtain $\ker L_{22}\cap\ker D_{22}=\{0\}$.\\
\noindent
\emph{Step 7:} We prove (iv). Assume that $\Lc$ is self-adjoint and $\Dc=\gr D$ for some dissipative $D\in\K^{n\times n}$. Then we have that $\mul\Dc=\{0\}=(\dom\Dc)^\perp$ and $\ker\Lc=(\ran\Lc)^\perp$. Hence,  $X_1=\ran\Lc=X_2^\perp$. This implies that $\hat L_{21}=\hat D_{22}=0$ and thus $L_{21}=D_{22}=0$. Invoking (iii), we have $\ker L_{22}=\ker L_{22}\cap\ker D_{22}=\{0\}$ which implies $L_{22}=I_{n_2}$. Furthermore,  $\mul \Lc=\ker\hat L_{11}=U(\ker L_{11}\times\{0\})$ and together with $(\ran\Lc)^\perp=\ker\Lc$ we obtain
\[
\{x\in\ran\Lc ~|~Dx\in(\ran\Lc)^\perp\}=\ker (P_{\ran\Lc}D|_{\ran\Lc})=\ker\hat D_{11}=U(\ker D_{11}\times \{0\}).
\]
The proof of (v) is analogous to the proof of (iv) and is therefore omitted.
\hfill$\qed$

\begin{proposition}
\label{prop:kron}
Let $\Dc\subseteq\K^{2n}$ be maximally dissipative and $\Lc\subseteq\K^{2n}$ be maximally nonnegative. Further, let $E,A\in\K^{n\times m}$ be such that  $\ran\begin{smallbmatrix}
E\\A
\end{smallbmatrix}=\Dc\Lc$. Then there exist some invertible $S\in\K^{n\times n}$, $T\in\K^{m\times m}$
and $n_i\in\N,i=1,2,3,4$, such that
\begin{align}
\label{maximalkcf_hinten}
S(sE-A)T=\begin{bmatrix}
sL_{11}-D_{11}&0&0&0&0&0\\D_{21}&sI_{n_2}&0&0&0&0\\sL_{21}&0&I_{n_3}&0&0&0\\0&0&0&sI_{n_4}&-I_{n_4}&0\\0&0&0&0&0&0
\end{bmatrix},
\end{align}
where $s L_{11}- D_{11}\in\K[s]^{n_1\times n_1}$ is regular and positive real and $\ker L_{11}\subset\ker L_{21}$.
\end{proposition}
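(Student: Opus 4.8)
The plan is to combine Proposition~\ref{lem:product} with a sequence of unitary and invertible equivalence transformations. After choosing $T$ suitably we may assume that the columns of $\begin{smallbmatrix}E\\A\end{smallbmatrix}$ form a basis of $\Dc\Lc$ together with $m-\dim\Dc\Lc$ zero columns. Since $\ker\Lc\cap\mul\Dc$ need not be trivial, Proposition~\ref{lem:product} is not directly applicable, so I would first peel this space off. Set $V:=\ker\Lc\cap\mul\Dc$, $d:=\dim V$ and $W:=V^\perp$. As $\Lc$ is self-adjoint (Lemma~\ref{lem:nonneg}) we have $\ran\Lc=(\ker\Lc)^\perp\subseteq W$, and as $\Dc$ is maximally dissipative $\dom\Dc=(\mul\Dc)^\perp\subseteq W$ (Lemma~\ref{lem:dissprop}); this forces $\Lc=(V\times\{0\})\widehat\oplus\Lc_W$ and $\Dc=(\{0\}\times V)\widehat\oplus\Dc_W$ with $\Lc_W:=\Lc\cap(W\times W)$ maximally nonnegative and $\Dc_W:=\Dc\cap(W\times W)$ maximally dissipative on $W$, satisfying $\ker\Lc_W\cap\mul\Dc_W=V\cap W=\{0\}$. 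A short computation with the definition of the product gives $\Dc\Lc=(V\times\{0\})\widehat\oplus(\{0\}\times V)\widehat\oplus\Dc_W\Lc_W$, and the two $V$-summands contribute to $sE-A$ a decoupled block $[sI_d,-I_d]$, i.e.\ $d$ column-minimal-index-one blocks.

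Next I would apply Proposition~\ref{lem:product} to $\Dc_W,\Lc_W$, obtaining up to unitary equivalence the pencil $\begin{smallbmatrix}sL_{11}-D_{11}&0\\ sL_{21}-D_{21}&sL_{22}-D_{22}\end{smallbmatrix}$ with $L_{11}=L_{11}^*\ge 0$, $D_{11}+D_{11}^*\le 0$, $\ker L_{11}\subset\ker L_{21}$ (part (i), using that $\Lc_W$ is nonnegative and maximal), and $L_{22}$, $-D_{22}$ orthogonal projections with $\ran L_{22}\cap\ran D_{22}=\{0\}$. Moreover $\ker L_{22}\cap\ker D_{22}=\{0\}$: by Proposition~\ref{lem:product}~(iii) this is equivalent to $\mul\Dc_W\widehat{\dot+}\ker\Lc_W=(\ran\Lc_W)^\perp\widehat+(\dom\Dc_W)^\perp$, which holds because maximality gives $(\ran\Lc_W)^\perp=\ker\Lc_W$ and $(\dom\Dc_W)^\perp=\mul\Dc_W$ while $\ker\Lc_W\cap\mul\Dc_W=\{0\}$. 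Hence $sL_{22}-D_{22}$ is positive real with $\ker L_{22}\cap\ker D_{22}=\{0\}$, so it is regular (Lemma~\ref{lem:posreal}~(a)); by \cite[Thm.~4.1]{ReisStyk11} its only possible eigenvalue is $0$, which is semisimple, and its index is at most one, so its Weierstra\ss\ form is $\diag(sI_{n_2},-I_{n_3})$. Transforming accordingly turns the $W$-pencil into $\begin{smallbmatrix}sL_{11}-D_{11}&0&0\\ \hat R_1&sI_{n_2}&0\\ \hat R_2&0&-I_{n_3}\end{smallbmatrix}$ with $\hat R_1,\hat R_2$ of degree at most one, and elementary column operations within the second and third block columns replace $\hat R_1$ by a constant matrix $D_{21}$ and $\hat R_2$ by $sL_{21}$; the inclusion $\ker L_{11}\subset\ker L_{21}$ persists since the new $L_{21}$ is a subblock of an invertible left-multiple of the old one.

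It remains to resolve the non-regularity of the $(1,1)$-block. Putting $\Nc:=\ker L_{11}\cap\ker D_{11}$ and using $\ran L_{11}\subseteq\Nc^\perp$ ($L_{11}$ Hermitian) and $\ran D_{11}\subseteq\Nc^\perp$ (dissipativity forces $\ker D_{11}=\ker D_{11}^*$), one obtains relative to $W_1=\Nc^\perp\oplus\Nc$ that $L_{11}=\diag(\hat L_{11},0)$, $D_{11}=\diag(\hat D_{11},0)$, $L_{21}=[\hat L_{21},0]$ (since $\Nc\subseteq\ker L_{21}$) and $D_{21}=[\hat D_{21},\tilde D_{21}]$, where $s\hat L_{11}-\hat D_{11}$ is positive real with $\ker\hat L_{11}\cap\ker\hat D_{11}=\{0\}$, hence regular (Lemma~\ref{lem:posreal}~(a)), and $\ker\hat L_{11}\subset\ker\hat L_{21}$. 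The $\Nc$-rows are then exactly the zero rows, while the $\Nc$-columns carry only the constant residual $\tilde D_{21}$, located in the $sI_{n_2}$-rows. Normalizing $\tilde D_{21}$ to $\begin{smallbmatrix}I_\rho&0\\0&0\end{smallbmatrix}$ by row and column operations --- compensated on the $sI_{n_2}$-columns so that block keeps its shape --- splits the $\Nc$-columns into $\rho$ columns that, together with $\rho$ of the $sI_{n_2}$-rows, form $\rho$ further column-minimal-index-one blocks (which I merge with those of the first step, so $n_4=d+\rho$) and $\dim\Nc-\rho$ columns that vanish; a final column operation then removes the coupling of the remaining $sI$-rows to the $\Nc$-columns, leaving exactly the row pattern of the second block row of \eqref{maximalkcf_hinten}. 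Assembling the blocks $s\hat L_{11}-\hat D_{11}$, the reduced $[D_{21},sI]$- and $[sL_{21},I]$-rows, $[sI_{n_4},-I_{n_4}]$, the zero rows and the zero columns produces \eqref{maximalkcf_hinten}.

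I expect this last stage to be the main obstacle: one has to coordinate the column operations so that the two couplings acquire the required patterns (constant, respectively purely first order) simultaneously, confirm that the residual block $\tilde D_{21}$ is entirely absorbed into additional minimal-index and zero blocks, and keep the block sizes $n_1,\dots,n_6$ consistent --- all while checking at each reduction that $\ker L_{11}\subset\ker L_{21}$ survives and that the top-left block stays positive real and regular.
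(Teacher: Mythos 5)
Your proposal is correct in substance and follows the same architecture as the paper's proof: you peel off $V=\ker\Lc\cap\mul\Dc$ exactly as the paper does with its space $X$ (including the decomposition $\Dc\Lc=(V\times\{0\})\widehat\oplus(\{0\}\times V)\widehat\oplus\Dc_W\Lc_W$), then apply Proposition~\ref{lem:product} to the reduced maximal relations, reduce the projection pencil $sL_{22}-D_{22}$ to $\diag(sI_{n_2},-I_{n_3})$, and finally reorganize couplings into the pattern of \eqref{maximalkcf_hinten} while absorbing leftovers into $[sI_{n_4},-I_{n_4}]$ blocks and zero rows/columns. Two local steps genuinely deviate from the paper, and both work: (a) you extract the regular core of the positive-real $(1,1)$ block by splitting along $\Nc=\ker L_{11}\cap\ker D_{11}$, using that $\ker D_{11}=\ker D_{11}^*$ for dissipative $D_{11}$ and $\ran L_{11}\subseteq\Nc^\perp$ for Hermitian $L_{11}$; this is a more self-contained replacement for the paper's appeal to Lemma~\ref{lem:posreal}\,(b) (which yields the same square zero block, since row and column minimal indices are zero in equal numbers), and it has the advantage of making $\ker\hat L_{11}\subseteq\ker\hat L_{21}$ and the regularity of $s\hat L_{11}-\hat D_{11}$ completely explicit; (b) you cite \cite[Thm.~4.1]{ReisStyk11} for the Weierstra\ss\ form of the projection block, where the paper's appendix proof builds it by a congruence (the paper itself uses that citation for the identical purpose in Proposition~\ref{cor:mainthm}, so this is fine). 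Two points need tightening, though neither threatens the argument: first, after normalizing $\tilde D_{21}$ to $\begin{smallbmatrix}I_\rho&0\\0&0\end{smallbmatrix}$, the coupling of the remaining $sI$-rows to the $\Nc$-columns is already zero; the column operation you actually still need is the one that removes the residual constant coupling of the $\rho$ merged rows to the $\Nc^\perp$-columns (adding multiples of the $\rho$ unit columns, which are zero elsewhere, so neither $s\hat L_{21}$, nor $\hat D_{21}$ in the remaining rows, nor $\ker\hat L_{11}\subseteq\ker\hat L_{21}$ is disturbed) --- only after that do the $\rho$ rows and columns split off as genuine $[sI_\rho,-I_\rho]$ blocks; second, your opening normalization of the columns of $\begin{smallbmatrix}E\\A\end{smallbmatrix}$ must be reconciled with the fact that the constructed spanning matrix has $n+d$ columns with $\dim\Nc-\rho$ of them zero, i.e.\ you still need the padding/deleting-zero-columns argument of the paper's Step~2 (using $m\geq\dim\Dc\Lc$) to produce the invertible $T\in\K^{m\times m}$ of the statement.
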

\begin{proof}
The proof consists of two steps. In the first step we derive a~certain range representation for $\Dc\Lc$. In second step, \eqref{maximalkcf_hinten} is obtained from the resulting range representation.\\
{\em Step 1:} We show that there exists some $\hat m\in\N$ and an invertible matrix $S\in\K^{n\times n}$ and some $n_1,n_2,n_3,n_4\in\N$, such that
\begin{equation}
\label{finalpencil}
\allowdisplaybreaks\begin{aligned}
\Dc\Lc=&\diag(S,S)\;\ran\begin{smallbmatrix}
L\\D
\end{smallbmatrix}\\
sL-D=&\begin{smallbmatrix}
sL_{11}- D_{11}&0&0&0&0&0\\D_{21}&sI_{n_2}&0&0&0&0\\sL_{21}&0&I_{n_3}&0&0&0\\0&0&0&sI_{n_4}&-I_{n_4}&0\\0&0&0&0&0&0
\end{smallbmatrix}\in\K[s]^{n\times \hat m}
\end{aligned}
\end{equation}
for some positive real and regular pencil $s  L_{11}-  D_{11}\in\K[s]^{n_1\times n_1}$, $ D_{21}\in\K^{n_2\times n_1}$, $ L_{21}\in\K^{n_3\times n_1}$.
\\
Consider the space $X\coloneqq\mul\Dc\cap\ker\Lc$,
and the relations
\[
\hat\Dc\coloneqq\Dc\widehat\ominus(\{0\}\times X),\quad \hat\Lc\coloneqq\Lc\widehat\ominus(X\times \{0\}).
\]
Then
we obtain an orthogonal decomposition
\begin{align}
\label{tildeohnetilde}
\Dc\Lc=\hat\Dc\hat\Lc\widehat\oplus(\{0\}\times X)\widehat\oplus(X\times \{0\})
\end{align}
and
\begin{align*}
\mul\hat\Dc =\mul\Dc\widehat\ominus X,\quad  \ker\hat\Lc =\ker\Lc\widehat\ominus X.
\end{align*}
This implies $\mul\hat\Dc\cap\ker\hat\Lc=\{0\}$. It can be further seen that $\hat \Dc$ is dissipative and $\hat \Lc$ is nonnegative. Further, define
\[\mathcal V\coloneqq \K^{2n}\widehat\ominus(\{0\}\times X)\widehat\ominus(X\times \{0\}).\]
The previous considerations show that both $\hat\Dc$ and $\hat\Lc$ are subsets of $\mathcal V$. Moreover, set $k_X\coloneqq\dim X$ and let $\iota:\mathcal V\rightarrow\K^{2(n-k_X)}=\K^{\dim\mathcal V}$ be a vector space isometry. It follows that
\begin{equation}\label{einzelniso}
    \tilde\Dc\coloneqq\iota(\hat\Dc),\quad \tilde\Lc\coloneqq\iota(\hat\Lc)
\end{equation}
are maximally dissipative and maximally nonnegative linear relations in $\K^{2(n-k_X)}$, respectively, satisfying $\mul\tilde\Dc\cap\ker\tilde \Lc=\{0\}$ and note that 
\begin{equation}\label{productiso}
\tilde\Dc\tilde\Lc=\iota(\hat\Dc\hat\Lc).
\end{equation}
Then Proposition~\ref{lem:product} implies the existence of some unitary  $\tilde U\in\K^{(n-k_X)\times(n-k_X)}$, such that, with $k_1\coloneqq\dim(\ran\tilde\Lc\cap\dom\tilde\Dc)$, $k_2\coloneqq n-k_X-k_1$,
\begin{align}
\label{dl_zerl_proof}
\tilde\Dc\tilde\Lc=\ran\diag(\tilde U,\tilde U)\begin{smallbmatrix}
\tilde L_{11}&0\\\tilde L_{21}&\tilde L_{22}\\\tilde D_{11}& 0\\\tilde D_{21}&\tilde D_{22}
\end{smallbmatrix}
\end{align}
for some matrices $\tilde L_{ij},\tilde D_{ij}\in\K^{k_i\times k_j}$ with $\tilde L_{11}\geq 0$, $\ker \tilde L_{11}\subseteq \ker \tilde L_{21}$, $\tilde D_{11}+\tilde D_{11}^*\leq 0$ and
\begin{align}
\label{d22l22proof}
\tilde D_{22}^2=-\tilde D_{22}=-\tilde D_{22}^*,\quad \tilde L_{22}^2=\tilde L_{22}=\tilde L_{22}^*,\quad \ker\tilde D_{22}\widehat\dotplus\ker\tilde L_{22}=\K^{k_2}.
\end{align}
Invoking \eqref{tildeohnetilde}--\eqref{dl_zerl_proof} and 
\[\Vc\widehat\oplus(\{0\}\times X)\widehat\oplus(X\times \{0\})\cong \K^{2(n-k_X)}\times\K^{k_X}\times\K^{k_X} \]
yields the existence of a~unitary matrix $\hat U\in\K^{n\times n}$ such that
\begin{align*}
\Dc\Lc=\diag(\hat U,\hat U)\;\ran \begin{smallbmatrix}\tilde L_{11}&0&0&0\\ \tilde L_{21}&\tilde L_{22}&0&0\\0&0&I_{k_X}&0\\\tilde D_{11}&0&0&0\\ \tilde D_{21}&\tilde D_{22}&0&0\\0&0&0&I_{k_X}\end{smallbmatrix}.
\end{align*}
Lemma~\ref{lem:posreal} (b) implies that $s\tilde L_{11}-\tilde D_{11}$ has only column and row minimal indices equal to zero and their number coincides. 
Hence there exist invertible $S_1,T_1\in\K^{k_1\times k_1}$ and some $n_1\in\N$, such that
\begin{align*}
S_1(s \tilde L_{11}- \tilde D_{11})T_1=\begin{bmatrix}s  L_{11}-  D_{11}&0\\0&0\end{bmatrix}
\end{align*}
for some positive real and regular pencil $s  L_{11}-  D_{11}\in\K[s]^{n_1\times n_1}$. Since $\tilde\Lc$ is maximally nonnegative, Proposition~\ref{lem:product}~(i) yields
\[
\ker L_{11}\times \K^{k_1-n_1}=\ker \begin{smallbmatrix}
L_{11}&0\\0&0
\end{smallbmatrix}=\ker \tilde L_{11}T_1=T_1^{-1}\ker \tilde L_{11}\subset T_1^{-1}\ker \tilde L_{21}=\ker \tilde L_{21}T_1.\]
Consequently, for some $L_{21}^{(1)}\in\K^{k_2\times n_1}$ 
\[\tilde L_{21} T_1=\left[L_{21}^{(1)},0_{k_2\times (k_1-n_1)}\right]\quad \text{and}\quad
\ker  L_{11}\subseteq \ker L_{21}^{(1)}. 
\]
Further, by using $[D_{21}^{(1)},D_{21}^{(2)}]:=\tilde D_{21}T_1$,  $D_{21}^{(1)}\in\K^{k_2\times n_1}$, $D_{21}^{(2)}\in\K^{k_2\times (k_1-n_1)}$, we find
\begin{multline*}
\allowdisplaybreaks
\begin{smallbmatrix}S_1&0&0&0\\0&I_{k_2+k_X}&0&0\\0&0&S_1&0\\0&0&0&I_{k_2+k_X}\end{smallbmatrix}
\;\ran \begin{smallbmatrix}\tilde L_{11}&0&0&0\\ \tilde L_{21}&\tilde L_{22}&0&0\\0&0&I_{k_X}&0\\\tilde D_{11}&0&0&0\\ \tilde D_{21}&\tilde D_{22}&0&0\\0&0&0&I_{k_X}\end{smallbmatrix}
\\=
\ran \begin{smallbmatrix}S_1\tilde L_{11}T_1&0&0&0\\ \tilde L_{21}T_1&\tilde L_{22}&0&0\\0&0&I_{k_X}&0\\ S_1\tilde D_{11}T_1&0&0&0\\ \tilde D_{21}T_1&\tilde D_{22}&0&0\\0&0&0&I_{k_X}\end{smallbmatrix}
=\ran \begin{smallbmatrix}
 L_{11}&0 &0&0&0\\ 0&0&0&0&0\\L_{21}^{(1)}&0& \tilde L_{22}&0&0\\ 0&0&0&I_{k_X}&0\\ D_{11}&0&0&0&0\\0&0&0&0&0\\ D_{21}^{(1)}&D_{21}^{(2)} & \tilde D_{22}&0&0\\0&0&0&0& I_{k_X}
\end{smallbmatrix}.
\end{multline*}
Denoting $k_3\coloneqq\dim\ker \tilde D_{22}$, $n_3\coloneqq\dim\ker \tilde L_{22}$, \eqref{d22l22proof} implies that $k_2=k_3+n_3$. 
Let $\tilde S\in\K^{k_2\times k_2}$ be a~matrix whose first $k_3$ columns form a~basis of $\tilde D_{22}$ and whose last $n_3$ columns form a~basis of $\tilde L_{22}$. Then $\tilde S^*(s\tilde L_{22}-\tilde D_{22})\tilde S=\diag(s \hat L_{22},\hat D_{22})$ for some $\hat L_{22}\in\K^{k_3\times k_3}$, $\hat D_{22}\in\K^{n_3\times n_3}$, which are positive definite by \eqref{d22l22proof}. Then, by taking a~suitable block congruence transformation, we obtain that 
there exists some invertible $S_2\in\K^{k_2\times k_2}$ such that the Weierstra\ss\ form is given by
\begin{align*}
S_2(s\tilde L_{22}-\tilde D_{22})S_2^*=\begin{bmatrix}sI_{k_3}&0\\0&-I_{n_3}\end{bmatrix},
\end{align*}
Hence, with $\begin{smallbmatrix}L_{21}^{(1,1)}\\ L_{21}\end{smallbmatrix}:=S_2L_{21}^{(1)}$ for some $L_{21}^{(1,1)}\in \K^{n_3\times n_1}$ and $L_{21}\in\K^{n_3\times n_1}$ which implies
\[
\ker  L_{11}\subseteq \ker L_{21}^{(1)}=\ker S_2L_{21}^{(1)} \subseteq\ker L_{21}.
\]
Further, decomposing
\[
[S_2 D_{21}^{(1)}, S_2 D_{21}^{(2)}]=\begin{smallbmatrix}D_{21}^{(1,1)}&D_{21}^{(2,1)}\\ D_{21}^{(1,2)} & D_{21}^{(2,2)}\end{smallbmatrix}\in\K^{(k_3+n_3)\times (n_1+(k_1-n_1))}
\]
leads to
\begin{multline}\label{multlinerevival}
\allowdisplaybreaks\begin{smallbmatrix}I_{k_1}&0&0&0&0&0\\0&S_2&0&0&0&0\\ 0&0&I_{k_X}&0&0&0\\0&0&0&I_{k_1}&0&0\\0&0&0&0&S_2&0\\0&0&0&0&0&I_{k_X}\end{smallbmatrix}
\;\ran \begin{smallbmatrix}
 L_{11}&0 &0&0&0\\ 0&0&0&0&0\\L_{21}^{(1)}&0& \tilde L_{22}&0&0\\ 0&0&0&I_{k_X}&0\\  D_{11}&0&0&0&0\\0&0&0&0&0\\ D_{21}^{(1)}&D_{21}^{(2)} & \tilde D_{22}&0&0\\0&0&0&0& I_{k_X}
\end{smallbmatrix}\\
=\ran \begin{smallbmatrix}
 L_{11}&0 &0&0&0\\ 0&0&0&0&0\\S_2L_{21}^{(1)}&0& S_2\tilde L_{22}T_2&0&0\\ 0&0&0&I_{k_X}&0\\  D_{11}&0&0&0&0\\0&0&0&0&0\\ S_2D_{21}^{(1)}&S_2D_{21}^{(2)} & S_2\tilde D_{22}T_2&0&0\\0&0&0&0& I_{k_X}
\end{smallbmatrix}
=\ran \begin{smallbmatrix}
 L_{11}&0 &0&0&0&0\\ 0&0&0&0&0&0\\0&0& I_{k_3}&0&0&0\\ L_{21}&0&0&0&0&0\\ 0&0&0&0&I_{k_X}&0\\  D_{11}&0&0&0&0&0\\0&0&0&0&0&0\\ D_{21}^{(1,1)}&D_{21}^{(2,1)}&0&0&0&0\\ 0&0 &0& -I_{n_3}&0&0\\ 0&0&0&0&0&I_{k_X}
\end{smallbmatrix}
\end{multline}
Now let $S_3\in\K^{k_3\times k_3}$, $T_3\in\K^{(k_1-n_1)\times (k_1-n_1)}$ be invertible with
$S_3D_{21}^{(2,1)}T_3=\begin{smallbmatrix}I_{k_5}&0\\0&0\end{smallbmatrix}$.\\
and $n_2:=k_3-k_5$, then using
\[
\begin{smallbmatrix}D_{21}^{(1,1,1)}\\-D_{21}\end{smallbmatrix}:=S_3D_{21}^{(1,1)},\quad D_{21}^{(1,1,1)}\in\K^{k_5\times n_1},\,D_{21}\in\K^{n_2\times n_1},
\]
we find for the lower five block rows in \eqref{multlinerevival}
\begin{multline}
\allowdisplaybreaks
\label{d_final} 
\begin{smallbmatrix}
I_{n_1}&0&0&0&0\\0&I_{n_1-k_1}&0&0&0\\0&0&S_3&0&0\\0&0&0&I_{n_3}&0\\0&0&0&0&I_{k_X}
\end{smallbmatrix}
\ran\begin{smallbmatrix}  D_{11}&0&0&0&0&0\\0&0&0&0&0&0\\ D_{21}^{(1,1)}&D_{21}^{(2,1)}&0&0&0&0\\ 0&0 &0& -I_{n_3}&0&0\\ 0&0&0&0&0&I_{k_X}\end{smallbmatrix}\\=
\ran \begin{smallbmatrix}
D_{11}&0&0&0&0&0&0\\0&0&0&0&0&0&0\\ D_{21}^{(1,1,1)}&I_{k_5}&0&0&0&0&0\\ D_{21}^{(1,1,2)}&0&0&0&0&0&0 \\ 0&0 &0&0& -I_{n_3}&0&0\\ 0&0&0&0&0&0&I_{k_X}
\end{smallbmatrix}=\ran \begin{smallbmatrix}
D_{11}&0&0&0&0&0&0&0\\0&0&0&0&0&0&0&0\\ 0&I_{k_5}&0&0&0&0&0&0\\ - D_{21}&0&0&0&0&0&0&0 \\ 0&0 &0&0&0& -I_{n_3}&0&0\\ 0&0&0&0&0&0&0& I_{k_X}
\end{smallbmatrix}
\end{multline}
and for the upper five block rows in \eqref{multlinerevival}
\begin{align}
\label{l_final}
&\begin{smallbmatrix}
I_{n_1}&0&0&0&0\\0&I_{n_1-k_1}&0&0&0\\0&0&S_3&0&0\\0&0&0&I_{k_4}&0\\0&0&0&0&I_{k_X}
\end{smallbmatrix}
\;\ran \begin{smallbmatrix}L_{11}&0 &0&0&0&0\\ 0&0&0&0&0&0\\0&0& I_{k_3}&0&0&0\\ L_{21}&0&0&0&0&0\\ 0&0&0&0&I_{k_X}&0
\end{smallbmatrix}
\\&=\ran \begin{smallbmatrix}
 L_{11}&0 &0&0&0&0\\ 0&0&0&0&0&0\\0&0& I_{k_3}&0&0&0\\ L_{21}&0&0&0&0&0\\0&0&0&0&I_{k_X}&0
\end{smallbmatrix}=\ran \begin{smallbmatrix}
 L_{11}&0&0 &0&0&0&0&0\\ 0&0&0&0&0&0&0&0\\0&0&0&I_{k_5}&0&0&0&0\\ 0&0&0&0&I_{n_2}&0&0&0\\L_{21}&0&0&0&0&0&0&0\\0&0&0&0&0&0&I_{k_X}&0
\end{smallbmatrix}.\nonumber
\end{align}
Then the form \eqref{finalpencil} is achieved by setting $n_4:=k_5+k_X$ and performing a~joint permutation of block rows of the form $2\rightarrow6\rightarrow5\rightarrow3\rightarrow4\rightarrow2$ and block columns ($3\rightarrow8\rightarrow7\rightarrow5\rightarrow2\rightarrow6\rightarrow3$) of the matrices on the right hand side in \eqref{d_final}  and \eqref{l_final}. Combining all of the so far transformations leads to an invertible  $S\in\K^{n\times n}$ with \eqref{finalpencil}.\\
\noindent
{\em Step 2:} 
Let $E,A\in\K^{n\times m}$ be such that  $\ran\begin{smallbmatrix}
E\\A
\end{smallbmatrix}=\Dc\Lc$ for some maximally dissipative relation $\Dc\subseteq\K^{2n}$ and some maximally nonnegative relation $\Lc\subseteq\K^{2n}$. Then the result from Step~1 gives
\begin{align}
\label{rangeequal}
\ran\begin{smallbmatrix}E\\A\end{smallbmatrix}=\Dc\Lc=\diag(S^{-1},S^{-1})\ran\begin{smallbmatrix}
L\\  D
\end{smallbmatrix}
\end{align}
with matrices $L,D\in\K^{n\times\hat m}$ as in \eqref{finalpencil}.
If $m\geq \hat m$ then there exists some invertible $T\in\K^{m\times m}$ such that
\[
\begin{smallbmatrix}
SE\\SA
\end{smallbmatrix}T=\begin{smallbmatrix}
 L&0\\ D&0
\end{smallbmatrix}
.\]
Hence \eqref{maximalkcf_hinten} follows from \eqref{finalpencil}.
If $m<\hat m$ then the block structure in \eqref{finalpencil} implies that  $d\coloneqq \dim\Dc\Lc=\dim\ran\begin{smallbmatrix}
 L\\  D
\end{smallbmatrix}=n_1+n_2+n_3+2n_4$ and that the first $d$ columns in  $\begin{smallbmatrix}
 L\\  D
\end{smallbmatrix}$ are linearly independent. Since $m\geq d$, we can remove $\hat m-m$ zero columns from $ L$ and $ D$ which leads to matrices $\hat L, \hat D\in\K^{n\times m}$ which are still of the form \eqref{finalpencil}.  Observe that \eqref{rangeequal} still holds after replacing $L$ with $\hat L$ and $D$ with $\hat D$. Hence there exists some invertible $T\in\K^{m\times m}$ such that $S(sE-A)T=s \hat L- \hat D$ which implies \eqref{maximalkcf_hinten}.\hfill
\end{proof}

\bibliographystyle{siam}


\bibliography{references}


\end{document}